\numberwithin{equation}{section}
\newtheoremstyle{mystyle}
    {}
    {}
    {\itshape}
    {}
    {\bfseries}
    {.}
    {5pt}
    {\thmname{#1}\thmnumber{ #2}\thmnote{ #3}}
\theoremstyle{mystyle}    
\newtheorem{theorem}{Theorem}[section]
\newtheorem{lemma}[theorem]{Lemma}
\newtheorem{proposition}[theorem]{Proposition}
\theoremstyle{definition}
\newtheorem{definition}[theorem]{Definition} 
\newtheorem{remark}[theorem]{Remark}
\newtheorem{assump}[theorem]{Assumption}
\crefname{enumi}{Assumption}{Assumptions}
\newcommand{\R}{\mathbb{R}}
\newcommand{\E}{\mathbb{E}}
\newcommand{\1}{\mathds{1}}
\newcommand{\bs}[1]{\boldsymbol{#1}}
\title{Quantitative convergence rates for extended mean field games 
with volatility control}
\author{Erhan Bayraktar%
  \thanks{Department of Mathematics, University of Michigan, United States. E-mail address: erhan@umich.edu. This author is partially supported by the National Science Foundation under grant DMS-2507940 and by the Susan M. Smith chair.}
  \and 
  Hiroaki Horikawa%
  \thanks{Department of Mathematics, University of Michigan, United States. E-mail address: hhiroaki@umich.edu.}
}
\date{}
\begin{document}
\maketitle

\begin{adjustwidth}{-0.75cm}{-0.75cm} 
\begin{abstract}
We investigate the convergence of symmetric stochastic differential games with interactions via control, where the volatility terms of both idiosyncratic and common noises are controlled.
We apply the stochastic maximum principle, following the approach of Lauri\`{e}re and Tangpi, to reduce the convergence analysis to the study of forward-backward propagation of chaos.
Under the standard monotonicity conditions, we derive quantitative convergence rates for open-loop Nash equilibria of $N$-player stochastic differential games toward the corresponding mean field equilibrium.
As a prerequisite, we also establish the well-posedness of the conditional McKean--Vlasov forward-backward stochastic differential equations by the method of continuation.
Moreover, we analyze a specific class of linear-quadratic settings to demonstrate the applicability of our main result.
\end{abstract}
\end{adjustwidth}

\tableofcontents

\section{Introduction}
We investigate the convergence problem of a class of symmetric $N$-player stochastic differential games with interactions through both states and controls. Let us first outline our setup. The precise formulation is provided in Section~\ref{sec:main_theorem}.

For each player $i \in \{1, \ldots, N\}$, the controlled state dynamics are given by:
\begin{align*}
    dX_t^{i,\bs{\alpha}} &= b_t\big(X_t^{i,\bs{\alpha}}, \alpha_t^i, L^N(\bs{X}^{\bs{\alpha}}_t, \bs{\alpha}_t)\big)dt  
    + \sigma_t\big(X_t^{i,\bs{\alpha}}, \alpha_t^i, L^N(\bs{X}^{\bs{\alpha}}_t, \bs{\alpha}_t)\big)dW_t^i 
    + \sigma^0_t\big(X_t^{i,\bs{\alpha}}, \alpha_t^i, L^N(\bs{X}^{\bs{\alpha}}_t, \bs{\alpha}_t)\big)dW_t^0.
\end{align*}
Here, $\alpha^i$ denotes the control of player $i$, and $L^N(\bs{X}^{\bs{\alpha}}_t, \bs{\alpha}_t) := \frac{1}{N} \sum^N_{i=1} \delta_{(X^{i,\bs{\alpha}}_t, \alpha^i_t) }$ represents the empirical distribution of the state-control configuration of all players at time $t$.
Given the strategies of the other players $\bs{\alpha}^{-i}$, each player $i$ aims to minimize the cost functional defined by:
\begin{align*}
J(\alpha^i; \bs{\alpha}^{-i}) :=\E  \left[
\int^T_0 f_t( X^{i,\bs{\alpha}}_t, \alpha^i_t, L^N(\bs{X}_t^{\bs{\alpha}}, \bs{\alpha}_t)) \, dt 
+ g(X^{i,\bs{\alpha}}_T,  L^N(\bs{X}^{\bs{\alpha}}_T))
\right].
\end{align*}
A strategy profile $\bs{\alpha} = (\alpha^1, \ldots, \alpha^N)$ is called an $N$-player Nash equilibrium (NE) if, for all $i \in \{1, \ldots, N\}$,
\begin{equation*}
    J(\alpha^i; \bs{\alpha}^{-i}) = \inf_{\beta \in \mathbb{A}_N} J(\beta; \bs{\alpha}^{-i}).
\end{equation*}
In a NE, each player selects the best possible strategy given the strategies of the other players.

Mean field games (MFGs), introduced by Lasry and Lions \cite{lasry2007mean}, and Huang et al.\ \cite{huang2006large}, are formulated as the limiting problem of symmetric stochastic differential games as the population size $N$ tends to infinity. Intuitively, under this approximation, the influence of any single agent diminishes as the population grows. The system thus reduces to the control problem of a representative agent whose state dynamics depend on a given flow of probability measures.
Specifically, in our setting, the state process of the representative agent is given by
\begin{align*}
    d X_t^{\alpha,\xi} = b_t\big(X_t^{\alpha,\xi}, \alpha_t, \xi_t\big)\, d t 
    + \sigma_t\big(X_t^{\alpha,\xi}, \alpha_t, \xi_t\big)\, d W_t 
    + \sigma^0_t\big(X_t^{\alpha,\xi}, \alpha_t, \xi_t\big)\, d W_t^0,
\end{align*}
for a given stochastic flow of probability measures $(\xi_t)_{t\in[0,T]}$.
The representative agent aims to find a strategy $\alpha$ that minimizes
\begin{align*}
    J^{\xi}(\alpha) \coloneqq \E \bigg[
    \int^T_0 f_t\big(X^{\alpha,\xi}_t, \alpha_t, \xi_t\big) \, d t 
    + g\big(X^{\alpha,\xi}_T, \mu_T\big)
    \bigg],
\end{align*}
where $\mu_T$ denotes the first marginal of $\xi_T$.
A pair $(\hat{\xi}, \hat{\alpha})$ is called a mean field equilibrium (MFE) if it satisfies both the optimality and the consistency conditions:
\begin{align*}
    J^{\hat{\xi}}(\hat{\alpha} ) = \inf_{\beta \in \mathbb{A}} J^{\hat{\xi}}(\beta)
    \quad \text{and} \quad
    \hat{\xi}_t = \mathcal{L}^1\big(X_t^{\hat{\alpha},\hat{\xi}},\hat{\alpha}_t\big),
\end{align*}
where $\mathcal{L}^1\big(X_t^{\hat{\alpha},\hat{\xi}},\hat{\alpha}_t\big)$ is the joint law of $(X_t^{\hat{\alpha},\hat{\xi}},\hat{\alpha}_t\big)$, conditioned on the realization of the common noise.
In contrast to \textit{classical} MFGs, which consider interactions solely through the state, our setting is referred to as \textit{extended} MFGs \cite{gomes2014existence} (or MFGs of controls).

In this paper, we address the convergence problem of the mean field approximation; specifically, the question of whether and how a sequence of NE $(\bs{\alpha}^N)_{N\in \mathbb{N}}$ for $N$-player games converges to the optimal control in the MFE. 
In our main theorem (Theorem~\ref{thm:main}), we establish the convergence rate for the squared error between the NE and the MFE:
\begin{align*}
\frac{1}{N} \sum^N_{i=1}\E \bigg[ \int^T_0 \big|\alpha_t^{N,i} -  \alpha_t^{i}\big|^2 \, dt \bigg] &\leq \frac{C}{N},
\end{align*}
where $\alpha^{N,i}$ denotes the strategy of player $i$ in the NE, and $\alpha^{i}$ represents the conditionally independent and identically distributed (i.i.d.) copies of the corresponding optimal strategy in the MFE (for the precise definition of these copies, see the end of Section~\ref{sec:main_theorem}). 

To achieve this result, we adopt the forward-backward propagation of chaos technique, pioneered by Lauri\`{e}re and Tangpi \cite{lauriere2022convergence}, and further developed in \cite{bayraktar2023propagation, jackson2024quantitative, possamai2025non}. Our strategy proceeds as follows:
\begin{enumerate}[label=(\roman*)]
    \item characterize the NE via classical forward-backward stochastic differential equations (FBSDEs) using the
    stochastic maximum principle (SMP);
    \item characterize the MFE via the conditional McKean--Vlasov FBSDEs (MKV-FBSDEs) similarly using the SMP; and 
    \item apply forward-backward propagation of chaos techniques to derive a quantitative convergence rate between the solutions of these two systems.
\end{enumerate}

Although extended MFGs with volatility control hold significant appeal for applications, particularly in finance and economics, their analysis is often considered highly intractable.
For instance, one major issue is the irregularity of the stochastic flow of conditional distributions $(\mathcal{L}^1 (X_t, \alpha_t))_{t \in [0,T]}$.

To overcome the difficulties, we leverage a type of monotonicity condition  (see Assumption~\ref{assump:displacement_monotonicity}), which was considered in \cite{bensoussan2015well, hu1995solution, peng1999fully} for the well-posedness of fully coupled (MKV-)FBSDEs and successfully applied by Jackson and Tangpi~\cite{jackson2024quantitative} to the convergence problem of classical MFGs.
Under this condition, we establish the well-posedness result of the conditional MKV-FBSDEs and consequently, the unique existence of the MFE. Furthermore, by imposing the additional smoothness condition on the coefficients of the game with respect to the measure argument (Assumption~\ref{assump:smoothness_L-derivative}), following \cite{jackson2024quantitative}, we derive the stated $O(N^{-1})$ convergence rate of the equilibria.

Moreover, to demonstrate the applicability of our main result, we examine a specific class of linear-quadratic (LQ) MFGs.
This setting appears novel compared to the existing literature, such as \cite{bensoussan2025linear, bensoussan2016linear, graber2016linear}.
To maintain our primary focus, we do not derive explicit feedback controls as in the literature of LQ-MFGs, but restrict our discussion to the well-posedness of the game and the convergence results.

\subsection{Literature review}
Introduced by Lasry and Lions \cite{lasry2007mean}, and Huang et al.\ \cite{huang2006large}, MFGs have become a powerful framework for modeling large systems of interacting agents. This framework considers a system of a large number of homogeneous agents who optimize their individual strategies while interacting weakly with the rest of the population --- that is, each player responds to the empirical distribution of the entire population. The theory analyzes the limiting behavior as the population size $N \to \infty$.  As a characterization of the MFE, a coupled system of the Hamilton--Jacobi--Bellman (HJB) equation and Fokker--Planck equation was introduced in \cite{lasry2007mean}. The former characterizes the optimality of the representative agent, while the latter describes the flow of the distribution of the player's state. 

Since the advent of the theory, the first direction of the convergence problem has been actively investigated: specifically, how the optimal strategy obtained in an MFE can form an $\varepsilon$-Nash equilibrium for an $N$-player game. However, the other direction---the naturally arising question of how and in what sense NE converge to the corresponding MFE---remained open for some time during the first decade of research in MFGs. The first major breakthroughs were achieved by Fischer \cite{fischer2017connection} and Lacker \cite{lacker2016general}, who established the convergence of the empirical measures associated with the NE using compactness arguments. Lacker \cite{lacker2020convergence} subsequently generalized these results to closed-loop controls, and Djete \cite{djete2023mfg_of_controls} extended them to accommodate extended MFGs.

Diverging from the compactness approach, the seminal work by Cardaliaguet et al. \cite{cardaliaguet2019master} succeeded in quantifying the convergence rate for closed-loop controls. By characterizing the value functions in the MFE via the \textit{master} equation---which is, formally, the limit of the $N$-Nash system---they derived the convergence rate of the error between the master equation's solution and the value functions of the $N$-player game. This approach was further applied in \cite{bayraktar2021finitestate, bayraktar2022finitestate, bayraktar2018finitestate, bayraktar2024mean, delarue2019master_central, delarue2020master_large_deviaion}. Remarkably, all of these papers also addressed the case with (uncontrolled) common noise.

However, none of the aforementioned works regarding convergence rates address MFGs with interactions through controls. Notably, such interactions naturally arise in many problems in finance and economics. See, for instance, \cite{alasseur2020extended, cardaliaguet2018mean, chan2015bertrand, graber2016linear, graber2018existence} and \cite[Sections 1.3.2 and 1.4.4]{carmona2015weak} for specific applications. For general studies of extended MFGs, we refer to \cite{bensoussan2025linear, carmona2015weak, gomes2014existence, gomes2016extended} and \cite[Section~4.6]{carmona2018probabilistic_I}.

Lauri\`{e}re and Tangpi \cite{lauriere2022convergence} were the first to derive the convergence rates for these extended games.
They characterized the equilibrium via FBSDEs and generalized the propagation of chaos technique to coupled forward-backward systems. This technique originates from Sznitman \cite{sznitman2006topics} and was adapted to backward equations in \cite{lauriere2022backward}.
Possamaï and Tangpi \cite{possamai2025non} followed a similar path for the non-Markovian setting using the weak formulation \cite{carmona2015weak}. Instead of the SMP, they employed the dynamic programming principle to characterize the MFE via \textit{generalized} MKV-BSDEs and obtained convergence rates for closed-loop equilibria.
However, both approaches have a limitation. To establish convergence for an arbitrary time horizon $T > 0$, the drift coefficient $b$ must satisfy a \textit{dissipativity} condition with a sufficiently large constant $K_B$ (see \cite[Theorem~2.2]{lauriere2022convergence} and \cite[Theorem~2.10]{possamai2025non}). Furthermore, neither work covers the case of controlled volatility.

Recently, Jackson and Tangpi \cite{jackson2024quantitative} made significant progress on these convergence results. Building on the probabilistic framework of \cite{lauriere2022convergence}, they quantified convergence for arbitrary time horizons $T > 0$, covering settings with controlled volatility and common noise. Key to their approach was the displacement monotonicity condition. This allowed them to dispense with the smallness condition on $K_b$ required in \cite{lauriere2022convergence, possamai2025non}. Using this condition, they applied the method of continuation to guarantee the well-posedness of the conditional MKV-FBSDEs and, consequently, the unique existence of the MFE. However, their analysis remains restricted to classical MFGs.

Up to now, the study of MFGs with interaction through controls, and volatility control has been limited, with the notable exception of \cite{djete2023mfg_of_controls}.
Incorporating these features simultaneously increases the difficulty of the problem significantly.
In the analytic approaches, volatility control typically complicates the Hamiltonian of the HJB equations, often resulting in the loss of semilinearity. 
Furthermore, in the coupled PDE framework \cite{lasry2007mean}, the dependence on the control distribution strengthens the coupling as in \cite{graber2018existence}. 
In the FBSDE approach, a major challenge lies in the regularity of the flow of distributions $(\mathcal{L}^1 (X_t, \alpha_t))_{t \in [0,T]}$.  Without volatility control, this flow is usually continuous. This allows for the use of Schauder's fixed point argument on the space $C([0,T]; \mathcal{P}_2(\R^n \times \R^n))$ to guarantee the existence of MKV-FBSDEs, as studied in \cite[Section~4.6]{carmona2018probabilistic_I}.
The difficulty arising from the irregularity of the flow (or more precisely, the flow of the distribution of the randomized control) was also highlighted in \cite[Remark~2.5]{djete2022extended} specifically for mean field control (MFC) problems.
Djete \cite{djete2023mfg_of_controls} addressed this issue using \textit{measure-valued} control rules, inspired by the relaxed controls introduced in \cite{lacker2016general}, and obtained convergence results for extended MFGs with volatility control via compactness arguments under general assumptions. However, the volatility of the common noise was not controlled, nor was a rate of convergence provided.

In this paper, we establish the convergence rate for extended MFGs in which the volatilities of both idiosyncratic and common noises are controlled. Crucial to achieving this goal is the monotonicity condition (Assumption~\ref{assump:displacement_monotonicity}). Various monotonicity conditions have been widely studied in the context of the method of continuation to prove the well-posedness of (MKV)-FBSDEs, as seen in \cite{bensoussan2015well, hu1995solution, peng1999fully}, \cite[Chapter~8.4]{zhang2017backward}, and \cite[Appendix~A]{jackson2024quantitative}. As previously mentioned, Jackson and Tangpi \cite{jackson2024quantitative} investigated several types of (displacement) monotonicity conditions for the convergence problem of classical MFGs.

In our context, the primary advantage of the monotonicity condition—similar to the one in \cite[(A1)]{bensoussan2015well}—is that it circumvents the need for flow regularity. This enables proving the well-posedness of the MKV-FBSDEs via Banach's fixed point theorem on the space of square-integrable processes, avoiding reliance on Schauder's fixed point theorem on the space of measure flows, and thus we can establish the unique existence of the MFE for extended MFGs in a strong formulation.

Recall that our primary goal is establishing an estimate for the error
$|\alpha^{N,i} -  \alpha^{i}\big|^2$. 
In the $N$-player game, the optimal strategy $\alpha_t^{N,i}$ of the player $i$ depends on its own state, the empirical distribution of all the players' states, \textit{plus} an additional term involving the other players' actions (denoted by $\zeta$; see \eqref{eq:zeta_def}). This term, which is specific to the extended setting, complicates the error estimation. In contrast to \cite{lauriere2022convergence}, which addressed this term by assuming a small time horizon $T$ or a dissipativity condition, our monotonicity condition also plays a crucial role in this estimate.
Consequently, under an additional smoothness condition on the coefficients with respect to the measure arguments (Assumption~\ref{assump:smoothness_L-derivative}), we obtain the quantitative convergence rate using the propagation of chaos technique.

In addition, to demonstrate the applicability of our framework, we study a specific class of LQ-MFGs. Our setting introduces novel elements compared to the existing literature. For example, Graber \cite{graber2016linear} analyzed extended LQ-MFC problems and LQ-MFGs. He derived the unique existence of an MFE by restricting the analysis to cases where the state coefficients lack distributional dependence on either the state or the control. This restriction ensures the equivalence of the MFC and MFG problems. Bensoussan et al. \cite{bensoussan2025linear} addressed extended MFGs without volatility control. To obtain an MFE, they analyzed the well-posedness of linear FBODEs, derived by taking expectations of the original MKV-FBSDEs arising from the stochastic maximum principle. Crucially, the absence of volatility control ensures that the resulting FBODEs do not depend on the martingale term $Z$ of the BSDE. In contrast, we consider a specific type of LQ-MFGs that satisfy the monotonicity condition, establishing both the unique existence of an MFE and convergence results. This setting allows for dependence on the control distribution within the state process and incorporates volatility control. However, unlike \cite{bensoussan2025linear, graber2016linear}, we do not derive the closed-loop equilibrium to align with our main focus.

\subsection{Structure of this paper}
The paper is organized as follows.
Section \ref{sec:main_theorem} formulates the $N$-player game and the corresponding mean field game, and states the main convergence result, Theorem~\ref{thm:main}.
Section \ref{sec:characterization_of_equilibria} characterizes the NE and the MFE via the associated classical and McKean--Vlasov FBSDE systems.
Section \ref{sec:proof_of_main_theorem} presents the proof of the main theorem.
Section \ref{sec:linear-quadratic} analyzes a linear-quadratic example.
Appendix \ref{sec:well_posedness_cond_MKVFBSDE} establishes the well-posedness of conditional MKV-FBSDEs using the method of continuation, guaranteeing the unique existence of the MFE.
Appendix \ref{sec:coupling_technique} constructs a conditionally i.i.d.\ sequence of solutions to the conditional MKV-FBSDEs, which serves as a prerequisite for the coupling technique used in the convergence analysis.

\section{Setting and main results}
\label{sec:main_theorem}
\subsection{Notation}
Throughout the paper, $N \in \mathbb{N}$ denotes the number of players in the finite player game. 
Take $k, m \in \mathbb{N}$. 
We denote by $|\cdot|$ and $\cdot$ the standard norm and inner product in the Euclidean space $\R^{k}$.
We use the same notation for the Frobenius norm and inner product on $\R^{k \times m}$. 
Specifically, $A \cdot B := \operatorname{tr}(A^\top B)$ and $|A| := \sqrt{A \cdot A}$ for $A, B \in  \R^{k \times m}$.

We denote by $\mathcal{P}_2(\R^{k})$ the $2$-Wasserstein space, that is, the space of all probability measures $\mu$ on the measurable space $(\R^{k}, \mathcal{B}(\R^{k}))$ with $M_2(\mu) < \infty $, where
\[
M_2(\mu) := \left(\int_{\R^k} |x|^2 d\mu(x)\right)^{\frac{1}{2}},
\]
and the space $\mathcal{P}_2(\R^{k})$ is equipped with the Wasserstein metric,
\[
\mathcal{W}_2(\mu^1, \mu^2) := \inf_{\pi} \left(\int_{\R^{k}\times \R^{k}} |x-y|^2 d\pi(x,y) \right)^{\frac{1}{2}},
\]
for $\mu^1, \mu^2 \in \mathcal{P}_2(\R^{k})$, where the infimum is taken over all probability measures $\pi$ defined on $\R^{k}\times \R^{k}$ whose first and second marginals are $\mu^1$ and $\mu^2$, respectively.

\subsection{Probabilistic  setup}
We fix a finite time horizon $T > 0$ and a measure $\mu_0 \in \mathcal{P}_2(\R^n)$ for a dimension $n \in \mathbb{N}$. 
We use $\mu_0$ as the initial distribution of the $N$ players' states.
We work on the canonical probability space and separately define the sample spaces for common noise for all players and for initial states and idiosyncratic noises for each player of the game.
Specifically, let $\mathcal{C} = C([0,T];\R^d)$, and define the sample spaces $\Omega^0 := \mathcal{C}$,   $\Omega^{1,N} := (\R^n)^N  \times \mathcal{C}^N$, and $\Omega^N := \Omega^0 \times \Omega^{1,N}$. 
We consider the probability spaces $(\Omega^0, \widetilde{\mathcal{F}}^0, \widetilde{\mathbb{P}}^0)$ and $(\Omega^{1,N}, \widetilde{\mathcal{F}}^{1,N}, \widetilde{\mathbb{P}}^{1,N})$, 
where $ \widetilde{\mathcal{F}}^0$ and $\widetilde{\mathcal{F}}^{1,N}$
are the Borel $\sigma$-algebra on $\Omega^0$ and  $\Omega^{N,1}$,
respectively, and 
\[
\widetilde{\mathbb{P}}^0 = \mu_{W}, 
\quad  \widetilde{\mathbb{P}}^{1,N} = (\mu_0)^{\otimes N} \otimes  (\mu_{W})^{\otimes N}, 
\] 
where $\mu_W$ denotes a Wiener measure on $\mathcal{C}$. 
Moreover, we denote by
\[(\Omega^0, \mathcal{F}^0, \mathbb{P}^0), \quad 
(\Omega^{1,N}, \mathcal{F}^{1,N}, \mathbb{P}^{1,N}), \quad 
\]
the completions of $(\Omega^0, \widetilde{\mathcal{F}}^0, \widetilde{\mathbb{P}}^0)$ and  $(\Omega^{1,N}, \widetilde{\mathcal{F}}^{1,N}, \widetilde{\mathbb{P}}^{1,N})$, 
respectively. Now we are ready to set
\[
(\Omega^N, \mathcal{F}^N, \mathbb{P}^N)
\]
as the completion of the product measure space $(\Omega^N, \mathcal{F}^0\otimes\mathcal{F}^{1,N}, \mathbb{P}^0\otimes\mathbb{P}^{1,N})$.
Intuitively, $\Omega^0$ accommodates the sample path of the common noise for all players, $\Omega^{1,N}$ the initial states and the idiosyncratic noises for each player, and $\Omega^{N}$ combines all sources of randomness in the game.

Now, define the canonical random variables and processes
\[
W^0_t(\omega^0, \mathbf{x},\boldsymbol{\omega}) := w^0(t), 
\quad
X_0^i(\omega^0, \mathbf{x},\boldsymbol{\omega}) := x^i, 
\quad 
W^i_t(\omega^0, \mathbf{x},\boldsymbol{\omega}) := w^i(t), \quad \text{for} \quad i \in \{1, \ldots, N \}
\]
for $(\omega^0, \mathbf{x},\boldsymbol{\omega}) = (\omega^0,x^1, \ldots,  x^N, \omega^1, \ldots, \omega^N) \in \Omega^N$.
Then by definition, the collection $(X^i_0)_{i\in \{1,\ldots N\}}$ forms an i.i.d.\ sequence of random variables with the common law $\mu_0$, $(W^i)_{i\in \{0,1,\ldots N\}}$ are independent Brownian motions on $(\Omega^N, \mathcal{F}^N, \mathbb{P}^N)$, and $(X^i_0)_{i\in \{1,\ldots N\}}$ is independent of $(W^i)_{i\in \{1,\ldots N\}}$.

Let $\mathbb{F}^0 = (\mathcal{F}^{0}_t)_{t\in [0,T]}$ and $\mathbb{F}^N= (\mathcal{F}^{N}_t)_{t\in [0,T]}$ be the completions with $\mathbb{P}^N$-null sets of the raw filtrations generated by $W^0$ and $(W^0, X^1_0, \ldots, X^N_0, W^1, \ldots W^N)$, respectively. Automatically, $\mathbb{F}^0$ and $\mathbb{F}^N$ satisfy the usual conditions.

For $N < M$, we always identify the natural extension $\widetilde{X}$ on $(\Omega^M, \mathcal{F}^M, \mathbb{P}^M)$ of a random variable $X$ defined on $(\Omega^N, \mathcal{F}^N, \mathbb{P}^N)$ with $X$ itself, by setting 
\[\widetilde{X}(\omega^0, x^1,\ldots, x^M, \omega^1,\ldots,\omega^M) := X(\omega^0, x^1,\ldots, x^N, \omega^1,\ldots,\omega^N).\]
Furthermore, we shall not distinguish a random variable X on $(\Omega^0, \mathcal{F}^0, \mathbb{P}^0)$ from its natural extension $\widetilde{X}: (\omega^0, \mathbf{x}, \boldsymbol{\omega}) \mapsto X(\omega^0)$. Similarly, for a sub-$\sigma$-algebra $\mathcal{G}^0 \subset \mathcal{F}^0$, we often simply write $\mathcal{G}^0$ for the sub-$\sigma$-algebra $\mathcal{G}^0\otimes \{\emptyset, \Omega^{1,N}\} \subset \mathcal{F}^N$.

Next, for the mean field game, viewed as a limiting problem of the $N$-player game, we only require the probability space $(\Omega, \mathcal{F}, \mathbb{F}, \mathbb{P}) := (\Omega^1, \mathcal{F}^1, \mathbb{F}^1, \mathbb{P}^1)$. Similarly, we write $(\omega^0, x, \omega)$ for a generic element of $\Omega$ and set $W := W^1$. 

We denote by $\E^0$ the expectation with respect to the probability measure $\mathbb{P}^0$, and use the same notation $\E^{1}$ for the expectation with respect to both $\mathbb{P}^1$ and $\mathbb{P}^{1,N}$, as well as $\E^{}$ for $\mathbb{P}$ and $\mathbb{P}^{N}$.

Let $(\mathcal{X}, \mathcal{G}, \mathbb{G}, \mathbb{Q})$ be a generic filtered probability space with the filtration $\mathbb{G} = (\mathcal{G}_t)_{0 \leq t \leq T}$.
We define the following spaces:
\begin{itemize}
    \item $L^2(\mathcal{X}, \mathcal{G}, \mathbb{Q}; \R^k)$ is the space of $\mathcal{G}$-measurable random variables $X: \mathcal{X} \to \R^k$ satisfying
    \[ 
        \| X \|_{L^2} := \left( \E^{\mathbb{Q}}\big[ |X|^2 \big] \right)^{1/2} < \infty. 
    \]

    \item $\mathbb{S}^2(\mathbb{G}; \R^k)$ is the space of $\R^k$-valued $\mathbb{G}$-adapted continuous processes $Y$ satisfying
    \[ 
        \| Y \|_{\mathbb{S}^2} := \left( \E^{\mathbb{Q}}\left[ \sup_{0 \leq t \leq T} |Y_t|^2 \right] \right)^{1/2} < \infty. 
    \]

    \item $\mathbb{H}^2(\mathbb{G}; \R^k)$ is the space of $\R^k$-valued $\mathbb{G}$-progressively measurable processes $Z$ satisfying
    \[ 
        \| Z \|_{\mathbb{H}^2} := \left( \E^{\mathbb{Q}}\left[ \int_0^T |Z_t|^2 \, dt \right] \right)^{1/2} < \infty. 
    \]
\end{itemize}

\begin{remark}
This canonical setup plays a crucial role in the coupling technique underlying the propagation of chaos result; see Lemma \ref{lemma:iid_copies_solves_MKVFBSDE}. 
Specifically, it enables the construction of a conditionally i.i.d.\ sequence of the solutions to the conditional MKV-FBSDEs \eqref{eq:conditional_MKVFBSDE_for_i} on $(\Omega^N, \mathcal{F}^N, \mathbb{F}^N, \mathbb{P}^N)$ from the unique solution to \eqref{eq:mean_field_MKVFBSDE} on $(\Omega, \mathcal{F}, \mathbb{F}, \mathbb{P})$. Since this is rather technical, the rigorous explanation of this technique is provided in Section~\ref{sec:coupling_technique}. For the possibility of the extension to the general (non-canonical) setup, see Remark~\ref{rem:non_canonical}. 
\end{remark}

We are now ready to define the conditional law of a random variable and a process, given the realization of $W^0$. The following result can be found in \cite[Lemma 2.4]{carmona2018probabilistic_II} and \cite[Lemma~1.1]{kurtz1988unique}.
\begin{lemma}
\label{lemma:process_conditional_dist}
Given \( X \in L^2(\Omega^N, \mathcal{F}^N, \mathbb{P}^N;\R^k) \), the mapping 
$$
\mathcal{L}^1(X) : \Omega^0 \ni \omega^0 \mapsto \mathcal{L}(X(\omega^0, \cdot, \cdot))
$$
is almost surely well defined under \( \mathbb{P}^0 \), and forms a random variable from \( (\Omega^0, \mathcal{F}^0, \mathbb{P}^0) \) into \( \mathcal{P}_2(\R^k) \) endowed with its Borel \( \sigma \)-field. The random variable \( \mathcal{L}^1(X) \) provides a conditional law of \( X \) given \( \mathcal{F}^0 \). 

Moreover, let $(X_t)_{t\in[0,T]}$ be an \( \mathbb{F}^{N} \)-progressively measurable process such that $X_t \in  L^2(\Omega^N, \mathcal{F}^N, \mathbb{P}^N;\R^k)$ for each $t\in[0,T]$. Then, the \( \mathcal{P}_2(\R^k) \)-valued process \( (\mathcal{L}(X_t|\mathcal{F}^0_t ))_{t \geq 0} \) is \( \mathbb{F}^0 \)-progressively measurable.

Finally, consider for any \( t \geq 0 \), a version of \( \mathcal{L}^1(X_t) \), which is well-defined as a random variable on \( (\Omega^0, \mathcal{F}^0, \mathbb{P}^0) \). Then,  \( (\mathcal{L}^1(X_t))_{t \geq 0} \) is a $\mathbb{P}^N$-modification of \( (\mathcal{L}(X_t|\mathcal{F}^0_t ))_{t \geq 0} \).
\end{lemma}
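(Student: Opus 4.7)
The plan is to exploit the product structure of the canonical probability space $(\Omega^N, \mathcal{F}^N, \mathbb{P}^N) = (\Omega^0, \mathcal{F}^0, \mathbb{P}^0) \otimes (\Omega^{1,N}, \mathcal{F}^{1,N}, \mathbb{P}^{1,N})$ together with Fubini's theorem, and to handle measurability into the Wasserstein space $\mathcal{P}_2(\R^k)$ via approximation by simple functions. For the first assertion, $X \in L^2(\Omega^N, \mathcal{F}^N, \mathbb{P}^N; \R^k)$ implies $\E^0[\E^{1}[|X|^2]] < \infty$ by Fubini, so for $\mathbb{P}^0$-a.e.\ $\omega^0$ the section $X(\omega^0, \cdot, \cdot)$ lies in $L^2(\Omega^{1,N}, \mathbb{P}^{1,N}; \R^k)$, and hence $\mathcal{L}(X(\omega^0, \cdot, \cdot)) \in \mathcal{P}_2(\R^k)$ is well defined. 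I would establish Borel measurability of $\omega^0 \mapsto \mathcal{L}^1(X)$ by first verifying it for random variables of tensor form $\sum_k \1_{A_k^0}(\omega^0) Y_k$ with $A_k^0 \in \mathcal{F}^0$ and $Y_k \in L^2(\Omega^{1,N})$, where the conditional law is piecewise-constant in $\omega^0$, and then passing to the $L^2$-limit using the fact that $L^2$-convergence of the sections implies $\mathcal{W}_2$-convergence of their laws. The identity $\E[\varphi(X) \mid \mathcal{F}^0] = \int \varphi \, d\mathcal{L}^1(X)$ for bounded Borel $\varphi$ follows directly from Fubini applied to $\E[\varphi(X)\1_{A^0 \times \Omega^{1,N}}]$.

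For the $\mathbb{F}^0$-progressive measurability of $(\mathcal{L}(X_t \mid \mathcal{F}^0_t))_{t \geq 0}$, I would approximate the $\mathbb{F}^N$-progressively measurable process $X_t$ in $L^2$ by processes that are piecewise-constant in time and of tensor-product form in $(\omega^0, \omega)$; for such approximations the conditional-law-valued process is explicitly $\mathbb{F}^0$-progressively measurable in $\mathcal{P}_2(\R^k)$, and Wasserstein continuity of the law map propagates the property to the limit. For the modification statement, the essential ingredient is the independence structure induced by the product measure: since $X_t$ is $\mathcal{F}^N_t$-measurable and the future common-noise increments $(W^0_s - W^0_t)_{s \geq t}$ are independent of $\mathcal{F}^N_t$, one has $\E[\varphi(X_t) \mid \mathcal{F}^0] = \E[\varphi(X_t) \mid \mathcal{F}^0_t]$ a.s.\ for every bounded continuous $\varphi$. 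A separability argument over a countable determining family of test functions then upgrades this pointwise-in-$\varphi$ equality to equality of the two $\mathcal{P}_2(\R^k)$-valued conditional laws as random variables.

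The main technical obstacle is the measurability into $\mathcal{P}_2(\R^k)$: its Borel structure is less transparent than for random variables valued in a Polish space like $\R^k$, and the joint $(t, \omega^0)$ progressive measurability requires the approximating sequences to respect both the filtration and the product decomposition simultaneously. Once this bookkeeping is in place, the key structural insight---that the product measure together with independence of future common-noise increments forces conditioning on $\mathcal{F}^0$ and on $\mathcal{F}^0_t$ to coincide for $\mathcal{F}^N_t$-measurable variables---makes the modification clean, and the remainder of the argument follows the standard route developed in \cite{carmona2018probabilistic_II, kurtz1988unique}.
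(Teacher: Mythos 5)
The paper does not actually prove this lemma---it simply cites \cite[Lemma~2.4]{carmona2018probabilistic_II} and \cite[Lemma~1.1]{kurtz1988unique}---and your sketch is a correct reconstruction of exactly the argument used in those references: Fubini on the product space for well-definedness and the conditional-law identity, approximation by tensor-form (resp.\ simple adapted) processes plus $\mathcal{W}_2$-stability under $L^2$-convergence for measurability, and independence of the future common-noise increments from $\mathcal{F}^N_t$ to identify $\mathcal{L}(X_t\mid\mathcal{F}^0)$ with $\mathcal{L}(X_t\mid\mathcal{F}^0_t)$. The only bookkeeping you leave implicit is the passage to a subsequence to get $\mathbb{P}^0$-a.e.\ (resp.\ $dt\otimes\mathbb{P}^0$-a.e.) convergence of the sections, after which completeness of the filtration handles the exceptional null sets; this is routine and does not affect correctness.
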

Based on this lemma, we will always work with a $\mathbb{F}^0$-progressively measurable version of $(\mathcal{L}^1(X_t))_{t\in[0,T]}$ for $\mathbb{F}^N$-progressively measurable process $(X_t)_{t\in[0,T]}$.

In light of the above definitions and observations, we say that a sequence of random variables $(X^i)_{i\in\{ 1, \ldots, N\}}$ defined on $(\Omega^N, \mathcal{F}^N, \mathbb{P}^N)$ is conditionally i.i.d.\ if, for almost every $\omega^0 \in \Omega^0$, the collection $(X^i(\omega^0, \cdot, \cdot))_{i\in\{ 1, \ldots, N\}}$ is i.i.d.\ on $(\Omega^{1,N}, \mathcal{F}^{1,N}, \mathbb{P}^{1,N})$. 
Similarly, we say a sequence of stochastic processes $((X_t^i)_{t\in[0,T]})_{i\in\{ 1, \ldots, N\}}$ is conditionally i.i.d.\ if, for each $t\in[0,T]$, $(X_t^i(\omega^0, \cdot, \cdot))_{i\in\{ 1, \ldots, N\}}$ is i.i.d.\ on $(\Omega^{1,N}, \mathcal{F}^{1,N}, \mathbb{P}^{1,N})$ for almost every $\omega^0 \in \Omega^0$. 

\subsection{The $N$-player game}
Let us first introduce the $N$-player game. For $\bs{\alpha} = (\alpha^1, \ldots, \alpha^N)$ where $\alpha^i$ denotes the strategy process of each agent $i=1,\ldots,N$, the dynamics of $i$-th player's state are defined by the solution to the following controlled SDE:
\begin{align*}
\left\{
\begin{aligned}
&dX_t^{i,\bs{\alpha}} = b_t\big(X_t^{i,\bs{\alpha}}, \alpha_t^i, L^N(\bs{X}^{\bs{\alpha}}_t, \bs{\alpha}_t)\big)dt 
+ \sigma_t\big(X_t^{i,\bs{\alpha}}, \alpha_t^i, L^N(\bs{X}^{\bs{\alpha}}_t, \bs{\alpha}_t)\big)dW_t^i
+ \sigma^0_t\big(X_t^{i,\bs{\alpha}}, \alpha_t^i, L^N(\bs{X}^{\bs{\alpha}}_t, \bs{\alpha}_t)\big)dW_t^0, \\
&X_0^{i,\bs{\alpha}} = 
X_0^i,
\end{aligned}
\right.
\end{align*}
where we denote
\[
\bs{X}^{\bs{\alpha}} := (X^{1,\bs{\alpha}}\, \ldots, X^{N,\bs{\alpha}} ),\, \text{and}\,\,
L^N (\bs{x}, \bs{a}) := \frac{1}{N} \sum^N_{j=1} \delta_{(x^j,a^j)}, \,\,
\text{for $(\bs{x}, \bs{a}) \in (\R^n)^N \times (\R^{\ell})^N $}.
\]
Clearly, each player’s state depends on the empirical distribution of all players’ states and controls, and thus the superscript $\bs{\alpha}$ is incorporated into $X^{i,\bs{\alpha}}$ for each $i$ to reflect this dependence.
Each player selects their strategy from the set of admissible strategies:
\[
\mathbb{A}_N := \mathbb{H}^2(\mathbb{F}^N; \R^k)
\]
and aims to minimize the following cost functional, given the running and terminal cost functions, $f$ and $g$, respectively, and  for $\bs{\alpha}\in (\mathbb{A}_N)^N$,
\begin{align*}
J(\alpha^i; \bs{\alpha}^{-i}) :=\E  \left[
\int^T_0 f(t, X^{i,\bs{\alpha}}_t, \alpha^i_t, L^N(\bs{X}_t^{\bs{\alpha}}, \bs{\alpha}_t)) \, dt 
+ g(X^{i,\bs{\alpha}}_T,  L^N(\bs{X}^{\bs{\alpha}}_T))
\right],
\end{align*}
where we use the shorthand notation 
\[\bs{\alpha}^{-i} := (\alpha^1, \ldots, \alpha^{i-1},\alpha^{i+1}, \ldots, \alpha^N ), \,\, \text{and}\,\, (\beta; \bs{\alpha}^{-i}) := (\alpha^1, \ldots, \alpha^{i-1},\beta, \alpha^{i+1},\ldots, \alpha^N ),
\]
for $ \beta\in \mathbb{A}_N$.
As usual, we define the NE of the game.
\begin{definition}
We say that $\bs{\alpha} \in (\mathbb{A}_N)^N$ is an $N$-player Nash equilibrium (NE) if for all $i = 1,2, \ldots, N$, it holds that 
\[
J(\alpha^i; \bs{\alpha}^{-i}) = \inf_{\beta \in \mathbb{A}_N} J(\beta; \bs{\alpha}^{-i})
\]
\end{definition}
Intuitively, this means that each player adopts the best possible strategy in response to the strategies of all other players.

To ensure that the game is well-defined and analytically tractable, we introduce the following assumptions on the coefficient of the state process and cost functions. 

Throughout the paper, we use the $L$-derivative to describe differentiation on the 2-Wasserstein space, which is defined as the Fr\'echet derivative of the lifted function on the space of square integrable random variables.  
See \cite[Chapter~5]{carmona2018probabilistic_I} for the precise definition and the basic properties. 
For an $\R^k$-valued function $\varphi$ defined on $\mathcal{P}_2(\R^n)$, $\mathcal{P}_2(\R^{\ell})$, 
and $\mathcal{P}_2(\R^{n\times k})$, we use the notations $\partial_{\mu}\varphi, \partial_{\nu}\varphi$ and $\partial_{\xi}\varphi$, respectively, to denote the $L$-derivatives of the functions on these spaces. 
We say that the function $\varphi$ is continuously $L$-differentiable, when the Fréchet derivative of the liftup is continuous as a function defined on the $L^2$-space of the related probability space.
\begin{assump}
\label{assump:standard_assump}
Let
\begin{equation*}
\begin{aligned}
& b : [0,T] \times \R^n \times \R^{\ell} \times \mathcal{P}_2(\R^n \times \R^{\ell}) \to \R^n, \\
& \sigma, \sigma^0 : [0,T] \times \R^n \times \R^{\ell} \times \mathcal{P}_2(\R^n \times \R^{\ell}) \to \R^{n \times d}, \\
& f : [0,T] \times \R^n \times \R^{\ell} \times \mathcal{P}_2(\R^n \times \R^{\ell}) \to \R, \,\,  \text{and} \\
& g : \R^n \times \mathcal{P}_2(\R^n) \to \R
\end{aligned}
\end{equation*}
be Borel measurable and satisfy:
\begin{enumerate}[label=(\roman*)]
    \item \label{assump:differentiability}
    For \( \varphi = b, \sigma, \sigma^0, f \), the function \(\varphi_t( x, a, \xi)\) is continuously differentiable in its last three arguments, and we denote its derivatives by \(\partial_x \varphi\), \(\partial_{\alpha} \varphi\), and \(\partial_{\xi} \varphi\), respectively. In addition, \( g(x, \mu) \) is continuously differentiable in all its arguments, and we write \(\partial_x g\) and \(\partial_{\mu} g\) for its derivatives.

    \item \label{assump:Lipschitz}
    The functions \(\varphi = b, \sigma, \sigma^0, \partial_x f\) are Lipschitz continuous in their last three arguments, uniformly in \(t\); that is, there exists a constant \(L_f > 0\) such that, for any \((t, x, a, \xi), (t, x', a', \xi') \in [0, T] \times \R^n \times \R^{\ell} \times \mathcal{P}_2(\R^n \times \R^{\ell})\),
    \[
    |\varphi_t( x, a, \xi) - \varphi_t( x', a', \xi')| \leq L_f \big( |x - x'| + |a - a'| + \mathcal{W}_2(\xi, \xi') \big).
    \]
    
    In addition, for any \((t, x, a, \xi ,u ), (t, x', a', \xi', u') \in [0, T] \times \R^n \times \R^{\ell} \times \mathcal{P}_2(\R^n \times \R^{\ell}) \times \R^n\), the following Lipschitz conditions hold:
    \begin{align*}
    |\partial_{\mu} f_t( x, a, \xi)(u) - \partial_{\mu} f_t( x', a', \xi')(u')| &\leq L_f \big( |x - x'| + |a - a'| + |u - u'| + \mathcal{W}_2(\xi, \xi') \big), \\
    |\partial_x g(x, \mu) - \partial_x g(x', \mu')| &\leq L_f \big( |x - x'| + \mathcal{W}_2(\mu, \mu') \big), \\
    |\partial_{\mu} g(x, \mu)(u) - \partial_{\mu} g(x', \mu')(u')| &\leq L_f \big( |x - x'| + |u - u'| + \mathcal{W}_2(\mu, \mu') \big),
    \end{align*}
    where $\mu$ is the first marginal of $\xi$ on $\mathcal{P}_2(\R^n)$.

    \item \label{assump:linear_growth}
    The functions \(\varphi = b, \sigma, \sigma^0, \partial_x f\) are of linear growth; that is, there exists a constant \( C > 0 \) such that, for any 
    $(t, x, a, \xi) \in [0, T] \times \R^n \times \R^{\ell} \times \mathcal{P}_2(\R^n \times \R^{\ell})$, we have
    \[
    |\varphi_t(x, a, \xi)| \leq C \big(1 + |x| + |a| + M_2(\xi)\big).
    \]
    In addition, the derivatives  \(\partial_x g\), \(\partial_{\xi} f\), and \(\partial_{\mu} g\) also satisfy the following linear growth condition: for any 
    $
    (t, x, a, \xi, u, v) \in [0, T] \times \R^n \times \R^{\ell} \times \mathcal{P}_2(\R^n \times \R^{\ell}) \times \R^n \times \R^{\ell},
    $  we have
    \begin{align*}
    |\partial_{\xi} f_t(x, a, \xi)(u, v)| &\leq C \big(1 + |x| + |a| + |u| + |v| + M_2(\xi)\big), \\
    |\partial_x g(x, \mu)| &\leq C \big(1 + |x| + M_2(\mu)\big), \\
    |\partial_{\mu} g(x, \mu)(u)| &\leq C \big(1 + |x| + |u| + M_2(\mu)\big).
    \end{align*}

    \item \label{assump:quadratic_growth}
    For any \((t, x, a, \xi )  \in [0, T] \times \R^n \times \R^{\ell} \times \mathcal{P}_2(\R^n \times \R^{\ell})\times  \R^n\), $f$ and $g$ satisfy the following quadratic growth condition, 
    \[
    |f_t(x,a,\xi)| \leq C_f( 1 + |x|^2 + |a|^2 + M^2_2(\xi)),
    \]
    \[
    |g(x,\mu)| \leq C_g( 1 + |x|^2 + M^2_2(\mu)),
    \]
    for some constant $C_f, C_g >0$.

    \item   \label{assump:bounded_L_derivative}
    The $L$-derivative $\partial_{\xi}b, \partial_{\xi}\sigma$, and $\partial_{\xi}\sigma^0$ are all bounded by some positive constant $C$.
    
    \item \label{assump:decomposition}
    The functions \(\varphi = b, \sigma, \sigma^0, f\) can be decomposed as
    \[
    \varphi_t( x, a, \xi) = \varphi^{(1)}_{t} (x, a, \mu) + \varphi^{(2)}_t(t, x, \xi),
    \]
    where \(\varphi^{(1)}\) and \(\varphi^{(2)}\) are Borel measurable functions, and \(\mu\) denotes the first marginal of \(\xi\) on \(\R^n\).
\end{enumerate}
\end{assump}

\begin{remark}
Let us elaborate on the assumptions. 
\ref{assump:differentiability} is a prerequisite for applying the SMP. Specifically, the derivatives assumed here drive the adjoint equations in the FBSDEs that characterize the equilibria.
\ref{assump:Lipschitz}, \ref{assump:linear_growth}, and \ref{assump:bounded_L_derivative} are invoked repeatedly throughout the proof of the main theorem to establish moment bounds and error estimates for the FBSDE systems.
\ref{assump:quadratic_growth} ensures that the cost functional remains finite for any choice of admissible strategies.
\ref{assump:decomposition} is usually referred to as the separability condition. Although this is restrictive, it is fairly standard in the literature on extended MFGs; see \cite{cardaliaguet2018mean, carmona2015weak, djete2022extended, lauriere2022convergence}. In our setting, this condition decouples the optimizer $\Lambda$ of the Hamiltonian (see Lemma \ref{lemma:optimizer_of_Hamiltonian}) from the distribution of the control. In order to dispense with this condition, one might rely on an analysis of the fixed-point maps as detailed in \cite[Section~3]{jackson2025mfg_control}. However, to avoid further  technical complexity, we impose the condition.
\end{remark}

Let us also impose some assumptions on the Hamiltonian $H$, taking values in $\R$:
\begin{equation}
\label{eq:def_Hamiltonian}
H_t(x, y, z, z^0, a,  \xi) := f_t(x,a,\xi) + b_t(x,a,\xi)\cdot y +\sigma_t(x,a,\xi) \cdot z + \sigma^0_t (x,a,\xi) \cdot z^0,
\end{equation}
for  $(t, x, y, z, z^0, a, \xi) \in [0,T] \times \R^n \times \R^n \times \R^{n\times d} \times \R^{n\times d} \times \R^{\ell}  \times \mathcal{P}_2(\R^n\times\R^{\ell})$.
\begin{assump}
\label{assump:Hamiltonian}
Let $H$ be as defined in \eqref{eq:def_Hamiltonian}.
\begin{enumerate}[label=(\roman*)]
    \item \label{assump:Lipschitz_Hamiltonian}
    We assume that for all $(t, x, y, z, z^0, a, \xi, u)$ and $(t, x^{\prime},  y^{\prime}, z^{\prime}, z^{0\prime}, a^{\prime}, \xi^{\prime}, u^{\prime})$ in 
    \[
    [0, T] \times \R^n \times \R^n \times \R^{n \times d} \times \R^{n \times d} \times \R^{\ell} \times \mathcal{P}_2(\R^n \times \R^{\ell}) \times \R^n,
    \]
    the functions $\partial_x H$ and $\partial_{\mu} H$ satisfy a Lipschitz continuity condition uniformly in $t$, that is,
    \begin{align*}
    &|\partial_x H_t( x, y, z, z^0, a, \xi) - \partial_x H_t( x^{\prime}, y^{\prime}, z^{\prime}, z^{0\prime}, a^{\prime}, \xi^{\prime})| \\
    &\hspace{4em} \leq L_f \big( |x - x^{\prime}|  + |y - y^{\prime}| + |z - z^{\prime}| + |z^0 - z^{0\prime}| + |a - a^{\prime}| + \mathcal{W}_2(\xi, \xi^{\prime}) \big),\\
    &|\partial_{\mu} H_t (x, y, z, z^0, a, \xi)(u) - \partial_{\mu} H_t (x^{\prime}, y^{\prime}, z^{\prime}, z^{0\prime}, a^{\prime}, \xi^{\prime})(u^{\prime})| \\
    &\hspace{4em} \leq L_f \big( |x - x^{\prime}|  + |y - y^{\prime}| + |z - z^{\prime}| + |z^0 - z^{0\prime}| 
    + |a - a^{\prime}| + |u - u^{\prime}| + \mathcal{W}_2(\xi, \xi^{\prime}) \big),
    \end{align*}
    where $L_f$ is a positive constant.

    \item \label{assump:convex_Hamiltonian}
    We assume that $H$ is strongly convex with respect to $a$, that is,  
    for any fixed $(t, x, y, z, z^0, \xi) \in [0, T] \times \R^n \times \R^n \times \R^{n \times d} \times \R^{n \times d} \times \mathcal{P}_2(\R^n \times \R^{\ell})$ and any $a, a^{\prime} \in \R^{\ell}$, we have
    \begin{align*}
    H_t(x,  y, z, z^0, a^{\prime}, \xi) - H_t(x, y, z, z^0, a, \xi) 
    \geq \partial_{\alpha} H_t(x, y, z, z^0, a,  \xi) \cdot (a^{\prime} - a) + \gamma |a^{\prime} - a|^2,
    \end{align*}
    for some $\gamma > 0$.

    \item \label{assump:convex_g_H}
    The functions $x\mapsto g(x,\mu)$ and $x\mapsto H_t( x,  y, z, z^0, a, \xi)$ are convex for any fixed $(t, x, y, z, z^0, a, \xi) \in [0, T] \times \R^n \times \R^n \times \R^{n \times d} \times \R^{n \times d} \times \R^\ell \times \mathcal{P}_2(\R^n \times \R^{\ell})$
\end{enumerate}
\end{assump}
\begin{remark}
It is clear that if Assumptions \ref{assump:standard_assump}, \ref{assump:Lipschitz}, \ref{assump:linear_growth}, and \ref{assump:bounded_L_derivative} are satisfied, then $\partial_{x} H$ and $\partial_{\mu} H$ have at most linear growth. The strong convexity assumption ensures the existence of a unique minimizer of the Hamiltonian and guarantees that this minimizer is Lipschitz continuous; see the subsequent Lemma \ref{lemma:optimizer_of_Hamiltonian}. The convexity conditions on $g$ and $H$ in x of \ref{assump:convex_g_H} are also required to establish the sufficiency part of the maximum principle in Lemma~\ref{lemma:SMP_MFG}.
\end{remark}

The next result is standard in the literature. For example, see \cite[Lemma~5.3]{lauriere2022convergence} and \cite[Lemma 3.3]{carmona2018probabilistic_I}.
\begin{lemma}
\label{lemma:optimizer_of_Hamiltonian}
Let Assumption \ref{assump:Hamiltonian} hold. Then, there exists a Borel measurable map
\[
\Lambda: [0, T] \times \R^n \times \R^n \times \R^{n \times d} \times \R^{n \times d} \times \mathcal{P}_2(\R^n) \times \R^{\ell} \to \R^{\ell}
\]
such that  for any $\zeta \in \R^{\ell}$ and any $(t, x, y, z, z^0, \xi) \in [0, T] \times \R^n \times \R^n \times \R^{n \times d} \times \R^{n \times d} \times \mathcal{P}_2(\R^n \times \R^{\ell})$,  
$\Lambda_t(x, y, z, z^0, \mu, \zeta)$ is the unique minimizer of the function $a \mapsto H_t( x, y, z, z^0, a, \xi) + a \cdot \zeta$.
Equivalently, it satisfies the following first-order condition:
\[
\partial_{\alpha} H_t(x,  y, z, z^0, \Lambda_t(x, y, z, z^0, \mu, \zeta), \xi) + \zeta = 0,
\]
or more specifically,
\begin{align*}
&\partial_{\alpha} f^{(1)}_t\big(x, \Lambda_t(x, y, z, z^0, \mu, \zeta), \mu\big)
+ \partial_{\alpha} b^{(1)}_t\big(x, \Lambda_t(x, y, z, z^0, \mu, \zeta), \mu\big) \\
&\quad + \partial_{\alpha} \sigma^{(1)}_t\big(x, \Lambda_t(x, y, z, z^0, \mu, \zeta), \mu\big)
+ \partial_{\alpha} \sigma^{0,(1)}_t\big(x, \Lambda_t(x, y, z, z^0, \mu, \zeta), \mu\big)
+ \zeta = 0,
\end{align*}
where $\mu$ denotes the first marginal of $\xi$.
Moreover, $\Lambda$ is Lipschitz continuous uniformly in $t$ and has at most linear growth.
\end{lemma}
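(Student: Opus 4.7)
The plan is to exploit the strong convexity of the Hamiltonian in $a$ (Assumption~\ref{assump:Hamiltonian}\ref{assump:convex_Hamiltonian}) to get a unique minimizer, then use the separability condition (Assumption~\ref{assump:standard_assump}\ref{assump:decomposition}) to see that the dependence reduces from $\xi$ to $\mu$, and finally use the first-order condition together with the Lipschitz estimates on $\partial_a H$ to establish Lipschitz continuity of $\Lambda$.

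First, I would observe that for any fixed $(t,x,y,z,z^0,\xi,\zeta)$, the map $a \mapsto H_t(x,y,z,z^0,a,\xi) + a\cdot\zeta$ is strongly convex in $a$ with parameter $\gamma$ (the linear term $a\cdot\zeta$ does not affect convexity). A strongly convex continuous function on $\R^\ell$ is coercive and has a unique minimizer, which I define as $\Lambda_t(x,y,z,z^0,\mu,\zeta)$. Strong convexity also yields the equivalent first-order characterization $\partial_a H_t(x,y,z,z^0,\Lambda,\xi) + \zeta = 0$. Under the decomposition in Assumption~\ref{assump:standard_assump}\ref{assump:decomposition}, each of $b,\sigma,\sigma^0,f$ splits as $\varphi^{(1)}_t(x,a,\mu) + \varphi^{(2)}_t(x,\xi)$, so $\partial_a H$ only involves $\varphi^{(1)}_t(x,a,\mu)$ and depends on the measure argument \emph{only} through the first marginal $\mu$. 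This yields the explicit form written in the statement and shows $\Lambda$ only depends on $\mu$.

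For measurability, I would invoke a standard measurable-selection argument: since the minimizer is unique and the map $(t,x,y,z,z^0,a,\mu,\zeta) \mapsto H_t(x,y,z,z^0,a,\xi) + a\cdot\zeta$ is Carath\'eodory (measurable in $t$, continuous in the other variables), the uniquely determined argmin is jointly Borel measurable. Alternatively, one can note that $\Lambda$ is the unique zero of the jointly Borel map $(a,\cdot) \mapsto \partial_a H_t(\cdot,a,\cdot) + \zeta$, so that $\{\Lambda \in B\}$ is Borel for every Borel $B$.

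For the Lipschitz continuity, consider two input vectors $\Theta=(x,y,z,z^0,\mu,\zeta)$ and $\Theta'=(x',y',z',z^{0\prime},\mu',\zeta')$, and set $\Lambda := \Lambda_t(\Theta)$ and $\Lambda' := \Lambda_t(\Theta')$. Strong convexity of $H$ in $a$ gives the monotonicity estimate
\begin{align*}
\bigl(\partial_a H_t(x,y,z,z^0,\Lambda,\xi) - \partial_a H_t(x,y,z,z^0,\Lambda',\xi)\bigr)\cdot(\Lambda - \Lambda') \geq 2\gamma\,|\Lambda - \Lambda'|^2,
\end{align*}
while the first-order conditions at $\Theta$ and $\Theta'$ let me replace the left-hand side by a difference of $\partial_a H$ at matching $\Lambda'$ but varying $(x,y,z,z^0,\mu)$, plus the difference $\zeta'-\zeta$. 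Combining with the Lipschitz continuity of $\partial_a H$ in $(x,y,z,z^0,\mu)$ inherited from Assumption~\ref{assump:standard_assump}\ref{assump:Lipschitz} (applied to $\varphi^{(1)}$) and the Cauchy--Schwarz inequality yields a bound of the form $2\gamma|\Lambda-\Lambda'|^2 \leq C|\Lambda-\Lambda'|\bigl(|x-x'|+|y-y'|+|z-z'|+|z^0-z^{0\prime}|+\mathcal{W}_2(\mu,\mu')+|\zeta-\zeta'|\bigr)$, which gives the Lipschitz bound on $\Lambda$ uniformly in $t$ after dividing by $|\Lambda-\Lambda'|$. The linear growth bound on $\Lambda$ then follows by taking $(x',y',z',z^{0\prime},\mu',\zeta')=(0,0,0,0,\delta_0,0)$ and bounding $|\Lambda_t(0,0,0,0,\delta_0,0)|$ uniformly in $t$ from the coercivity inherent in strong convexity together with Assumption~\ref{assump:standard_assump}\ref{assump:linear_growth}.

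The only mildly delicate step is the Lipschitz estimate: one must be careful to rewrite the first-order conditions so that the monotonicity inequality applies at a common $\Lambda'$, and to verify that the Lipschitz constants on $\partial_a H$ (involving only the $\varphi^{(1)}$ derivatives) are indeed available from the standing assumptions on $f,b,\sigma,\sigma^0$. Everything else is a direct consequence of strong convexity plus the separability structure.
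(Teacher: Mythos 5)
Your argument is correct in outline and is precisely the standard argument that the paper itself defers to by citation (the paper offers no proof of Lemma~\ref{lemma:optimizer_of_Hamiltonian}, pointing instead to \cite[Lemma~5.3]{lauriere2022convergence} and \cite[Lemma~3.3]{carmona2018probabilistic_I}): strong convexity gives existence, uniqueness, and the first-order characterization; the separability condition in Assumption~\ref{assump:standard_assump}~\ref{assump:decomposition} reduces the measure dependence of $\partial_\alpha H$ to the first marginal $\mu$; measurable selection gives Borel measurability; and the strong monotonicity of $\partial_\alpha H$ in $a$ combined with the first-order conditions gives the Lipschitz and linear-growth bounds.

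The one step you flag as ``mildly delicate'' deserves more than a flag, because it is the only place the argument can actually fail. Your Lipschitz estimate requires $|\partial_\alpha H_t(x,y,z,z^0,a,\xi)-\partial_\alpha H_t(x',y',z',z^{0\prime},a,\xi')|\le C\big(|x-x'|+|y-y'|+|z-z'|+|z^0-z^{0\prime}|+\mathcal{W}_2(\mu,\mu')\big)$ uniformly in $a$. Writing $\partial_\alpha H=\partial_\alpha f^{(1)}+\partial_\alpha b^{(1)}\cdot y+\partial_\alpha\sigma^{(1)}\cdot z+\partial_\alpha\sigma^{0,(1)}\cdot z^0$, the differences in $y,z,z^0$ are controlled by the boundedness of $\partial_\alpha b^{(1)}$ etc.\ (which does follow from Assumption~\ref{assump:standard_assump}~\ref{assump:Lipschitz}), but the cross terms of the form $\big(\partial_\alpha b^{(1)}_t(x,a,\mu)-\partial_\alpha b^{(1)}_t(x',a,\mu')\big)\cdot y'$ are multiplied by the unbounded adjoint variable $y'$, and Assumption~\ref{assump:standard_assump}~\ref{assump:Lipschitz} gives Lipschitz continuity of $b$, not of $\partial_\alpha b^{(1)}$; likewise Lipschitz continuity of $\partial_\alpha f^{(1)}$ is not among the stated hypotheses (only $\partial_x f$ and $\partial_\mu f$ are). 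In the cited references this is resolved by taking $b,\sigma,\sigma^0$ affine in $a$ with coefficient matrices independent of $(x,\mu)$, so that these cross terms vanish; the same implicit structural hypothesis is needed here. This is a looseness shared with the paper's statement of the lemma (which invokes only Assumption~\ref{assump:Hamiltonian}), so it is not a defect specific to your proof, but your assertion that the required Lipschitz constants are ``inherited from Assumption~\ref{assump:standard_assump}~\ref{assump:Lipschitz}'' is not literally correct as written.
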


\begin{remark}
Instead of using $\R^{\ell}$ as the space of the players' control values, we can consider a convex closed subset of $\R^{\ell}$. In that case, the optimizer $\Lambda$ of $H + a\cdot \zeta$ still exists, but it satisfies a variational inequality instead of the first-order condition stated above.

The variable $\zeta$ accounts for interactions between players via their controls. Specifically, in the context of Lemma \ref{lemma:smp_N_player}, $\zeta$ corresponds to the $L$-derivative terms appearing in the partial derivative $\partial_{\alpha_i} H^{N,i}$ of the N-player Hamiltonian \eqref{eq:N-player_Hamiltonian_def} (see \eqref{eq:zeta_def}). Such terms arise specifically in extended games where the coefficients $b, \sigma, \sigma^0, f$ depend on the distribution of controls.
\end{remark}

\subsection{The mean field game}
\label{sec:mean-field_game}
We now formulate the mean field game of the $N$-player game introduced in the previous section. Throughout this section, we work on the filtered probability space $(\Omega, \mathcal{F}, \mathbb{F}, \mathbb{P})$. The representative player selects a strategy $\alpha$ from the set of admissible controls:
\[
\mathbb{A} := \mathbb{H}^2(\mathbb{F}; \R^\ell).
\]
Given some stochastic flow of probability measures $\xi:[0,T]\times \Omega^0 \to \mathcal{P}_2(\R^n \times \R^{\ell})$, which is $\mathbb{F}^0$-progressively measurable, the player's state process is defined as the solution of the following SDE:
\begin{align*}
\left\{
\begin{aligned}
&dX_t^{\alpha,\xi} = b_t\big(X_t^{\alpha,\xi}, \alpha_t, \xi_t\big)\, dt 
+ \sigma_t\big(X_t^{\alpha,\xi}, \alpha_t, \xi_t\big)\,dW_t 
+ \sigma^0_t\big(X_t^{\alpha,\xi}, \alpha_t, \xi_t\big)\, dW_t^0,\\ 
&X_0^{\alpha,\xi} = X^1_0.
\end{aligned}
\right.
\end{align*}
Analogously to the $N$-player game, we now define the cost functional for the representative player 
\[J^{\xi}(\alpha):=\E  \left[ 
\int^T_0 f_t( X^{\alpha,\xi}_t,  \alpha_t,  \xi_t) \, dt 
+ g(X^{\alpha,\xi}_T,\mu_T)
\right], 
\]
where $\mu_T \in \mathcal{P}_2(\R^n)$ is the first marginal of $\xi_T$.

Now we are ready to define the mean field equilibrium:
\begin{definition}
Given $\hat{\alpha} \in \mathbb{A}$ and an $\mathbb{F}^0$-progressively measurable process $\hat{\xi}: [0,T] \times \Omega \to \mathcal{P}_2(\R^n \times \R^\ell)$ with $\E \int^T_0 M_2^2(\hat{\xi}_t) \, dt < \infty$, we say that the pair $(\hat{\alpha}, \hat{\xi})$ is a mean field equilibrium (MFE) if it satisfies the optimality:
    \[J^{\hat{\xi}}(\hat{\alpha} ) = \inf_{\beta \in \mathbb{A}} J^{\hat{\xi}}(\beta) , \]
and the fixed point condition:
    \[
    \hat{\xi}_t = \mathcal{L}^1(X_t^{\hat{\alpha},\hat{\xi}},\hat{\alpha}_t),  \,\, \mathbb{P}^0 \text{-} a.s. \,\, \text{for all $t\in[0,T]$.}
    \]  

We say that the MFG admits a unique MFE if it possesses at least one MFE, and for any two MFE $(\alpha, \xi)$ and $(\alpha', \xi')$, we have $\alpha = \alpha'$ in $\mathbb{H}^2(\mathbb{F}; \R^{\ell})$ and $\xi$ and $\xi'$ are modifications of each other.
\end{definition}

We conclude this section by establishing the existence of conditionally i.i.d. copies of the state and strategy pair in equilibrium. The construction of such copies is conducted in Appendix~\ref{sec:coupling_technique}.
\begin{lemma}
\label{lemma:conditionally_i.i.d.copies}
Suppose Assumptions~\ref{assump:standard_assump} and \ref{assump:Hamiltonian} hold. Let $(\alpha, \xi)$ be an MFE.
Then, on the space $(\Omega^N, \mathcal{F}^N, \mathbb{P}^N)$, there exist conditionally i.i.d.\ copies $\{(X^i, \alpha^i)\}_{i=1}^N$ of the process $(X^{\alpha, \xi}, \alpha)$.
These processes share the same joint distribution on $\mathcal{C} \times L^2([0,T])$ as the pair $(X^{\alpha, \xi}, \alpha)$.
\end{lemma}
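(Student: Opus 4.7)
The plan is to exploit the canonical product structure of $(\Omega^N,\mathcal{F}^N,\mathbb{P}^N)$: the driving randomness $(W^0, X_0^1, W^1)$ of the MFG already lives on $\Omega^N$, and the extra families $(X_0^i, W^i)_{i\ge 2}$ are independent copies of $(X_0^1, W^1)$ that remain independent of $W^0$. Using a Doob--Dynkin representation of the optimal control combined with strong uniqueness of the controlled SDE, each copy $(X^i,\alpha^i)$ will be produced by applying the same deterministic functional of $(x, w^0, w)$ to the $i$-th slice of the driving data, which yields the claimed conditional i.i.d.\ property.

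First I would note that since $\xi$ is $\mathbb{F}^0$-progressively measurable it depends, modulo null sets, only on the path of $W^0$, and via the identification recalled in Section~2.2 it extends verbatim from $\Omega^0$ to $\Omega^N$. Since $\mathbb{F}$ is the $\mathbb{P}$-augmentation of the filtration generated by $(X_0^1, W^0, W^1)$, the process $\alpha \in \mathbb{A}$ admits, up to $\mathbb{P}$-null sets, a Borel progressively measurable representation $\alpha_t = \psi_t\bigl(X_0^1, W^0_{\cdot\wedge t}, W^1_{\cdot\wedge t}\bigr)$. For each $i\in\{1,\ldots,N\}$ I then define on $(\Omega^N,\mathcal{F}^N,\mathbb{P}^N)$
\[
\alpha^i_t(\omega^0,\mathbf{x},\boldsymbol{\omega}) := \psi_t\bigl(x^i, w^0_{\cdot\wedge t}, w^i_{\cdot\wedge t}\bigr),
\]
which is $\mathbb{F}^N$-progressively measurable and square integrable by construction.

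Next I would let $X^i$ be the unique strong solution on $(\Omega^N,\mathcal{F}^N,\mathbb{P}^N)$ of
\[
dX^i_t = b_t(X^i_t, \alpha^i_t, \xi_t)\,dt + \sigma_t(X^i_t, \alpha^i_t, \xi_t)\,dW^i_t + \sigma^0_t(X^i_t, \alpha^i_t, \xi_t)\,dW^0_t,\qquad X^i_0 = X_0^i,
\]
whose existence and uniqueness follow from the Lipschitz hypotheses in Assumption~\ref{assump:standard_assump}. Strong uniqueness expresses $X^i$ as a Borel functional of $(X_0^i, W^0, W^i)$, and since the coefficients, the $\mathbb{F}^0$-measurable datum $\xi$, and the functional $\psi$ are common across $i$, every pair $(X^i,\alpha^i)$ is obtained by applying the \emph{same} measurable functional to $(X_0^i, W^0, W^i)$.

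Finally, the conditional i.i.d.\ property follows by conditioning on $\mathcal{F}^0$: under this conditioning $W^0$ and $\xi$ are frozen, while the inputs $(X_0^i, W^i)_{i=1}^N$ remain i.i.d., so the pairs $(X^i,\alpha^i)_{i=1}^N$ are conditionally i.i.d. Equality in law of each $(X^i,\alpha^i)$ with $(X^{\alpha,\xi},\alpha)$ on $\mathcal{C}\times L^2([0,T])$ is then immediate from the coincidence of the defining functionals and the equidistribution of their inputs. The main obstacle is producing the Doob--Dynkin representation $\psi$ in a form that is simultaneously Borel on path space, progressively measurable in $t$, and null-set consistent after being lifted to $\Omega^N$; this is handled by approximating $\alpha$ first by simple $\mathbb{F}$-adapted processes with transparent functional form and then passing to the limit, as developed rigorously in Appendix~\ref{sec:coupling_technique}.
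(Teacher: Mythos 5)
Your proposal is correct, and it reaches the conclusion by a genuinely different and more elementary route than the paper. The paper does not copy the control directly: it first passes to the FBSDE characterization of the MFE, takes a raw-filtration progressively measurable version of the \emph{entire} solution tuple $\Theta=(X,Y,Z,Z^0)$ of \eqref{eq:mean_field_MKVFBSDE} as a Borel functional of $(W^0,X_0,W)$, substitutes $(W^0,X_0^i,W^i)$ to define $\Theta^i$, and then uses Lemma~\ref{lemma:rep_of_stoch_int} (representation of stochastic integrals as Borel functionals of the driving paths) to verify in Lemma~\ref{lemma:iid_copies_solves_MKVFBSDE} that each $\Theta^i$ actually solves the copied equation \eqref{eq:conditional_MKVFBSDE_for_i}; the control copies are only then recovered through the optimizer $\Lambda$. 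Your route — Doob--Dynkin representation of $\alpha$ as $\psi_t(X_0^1,W^0_{\cdot\wedge t},W^1_{\cdot\wedge t})$, substitution of the $i$-th slice of the driving data, and re-solving the forward controlled SDE with the $\mathbb{F}^0$-measurable input $\xi$ — is lighter, avoids the backward equation entirely, and does prove exactly the statement of Lemma~\ref{lemma:conditionally_i.i.d.copies}; by strong uniqueness of the forward SDE your $(X^i,\alpha^i)$ coincide a.s.\ with the paper's for the $(X,\alpha)$-components, and for $i=1$ with $(X^{\alpha,\xi},\alpha)$ itself. What the paper's heavier construction buys, and what your argument does not deliver, is conditional i.i.d.-ness of the adjoint components $(Y^i,Z^i,Z^{0,i})$ as solutions of the copied FBSDEs — this is precisely the delicate point flagged in Remark~\ref{rem:non_canonical}, and it is consumed later in the propagation-of-chaos estimates (Proposition~\ref{prop:diff_estimate_for_all_i} and the proof of Theorem~\ref{thm:main} compare $\Theta^{N,i}$ against the full copied tuple $\Theta^i$). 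So your proof is a valid and cleaner proof of the lemma as stated, but it could not replace the appendix construction wholesale; the one technical point you should make explicit is that the solution map of the forward SDE is itself a fixed Borel functional of $(X_0^i,W^0,W^i)$ (via Picard iteration, exactly the content of the paper's Lemma~\ref{lemma:rep_of_stoch_int} restricted to the forward component), which you invoke but do not prove.
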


\subsection{Main convergence result}
Here, we present our main monotonicity assumptions and the convergence result.
First, using $\Lambda$ from Lemma \ref{lemma:optimizer_of_Hamiltonian}, for $\theta = (x,y,z,z^0)$ and $\xi \in \mathcal{P}_2(
\R^n\times \R^n\times \R^{n\times d}\times \R^{n\times d } 
) 
\to \mathcal{P}_2 (\R^n \times \R^{\ell} ) $, let us denote
\begin{equation}
\label{eq:from_b_to_B}
\begin{aligned}
B_t(\theta,\xi)
 &:= b_t\big(x, \Lambda_t(\theta, \mu, 0), \varphi_t(\xi)\big), \\
\Sigma_t(\theta,\xi)
 &:= \sigma_t \big(x, \Lambda_t(\theta, \mu, 0), \varphi_t(\xi)\big), \\
\Sigma^0_t(\theta,\xi)
 &:= \sigma^0_t\big( x, \Lambda_t(\theta, \mu, 0), \varphi_t(\xi)\big), \\
F_t(\theta,\xi)
 &:= \partial_x H_t\big(x, \Lambda_t(\theta, \mu, 0), \varphi_t(\xi)\big), \\
G(x,\mu)
 &:= \partial_x g(x,\mu).
\end{aligned}
\end{equation}
for a function $\varphi : [0,T] \times \mathcal{P}_2(
\R^n\times \R^n\times \R^{n\times d}\times \R^{n\times d } 
) 
\to \mathcal{P}_2 (\R^n \times \R^{\ell} ) $:
\begin{equation}
\label{eq:change_space_of_measure}
\varphi_t(\xi) := \xi \circ \left( \mathrm{id}_{x}, \Lambda_t( \cdot, \cdot, \cdot, \cdot, \mu, 0) \right)^{-1},
\end{equation}
where $\mu$ is the first marginal of $\xi$, and $\mathrm{id}_{x}$ denotes the projection $\theta = (x,y,z,z^0)\mapsto x$.

The following is our standard assumption on the monotonicity of the coefficients. The explanation is provided in Remark \ref{rem:main_on_assumption}.
\begin{assump} 
\label{assump:displacement_monotonicity}
There exists $C_H, C_G >0$ such that for any square integrable random variables $\Theta= (X, Y, Z,  Z^0), \Theta' = ( X',  Y',  Z', Z^{0 \prime} )$  of appropriate dimensions,
\begin{equation}
\label{eq:standard_monotonicity}
\begin{aligned}
\E_{ (\Theta, \Theta')}\Big[ \, 
-\ &\Delta X \cdot \big( F_t(\Theta, \mathcal{L}(\Theta)) - F_t(\Theta', \mathcal{L}(\Theta')) \big) \\
+\ &\Delta Y \cdot \big( B_t(\Theta, \mathcal{L}(\Theta)) - B_t(\Theta', \mathcal{L}(\Theta')) \big) \\
+\ &\Delta Z \cdot \big( \Sigma_t(\Theta, \mathcal{L}(\Theta)) - \Sigma_t(\Theta', \mathcal{L}(\Theta')) \big) \\
+\ &\Delta Z^0 \cdot \big( \Sigma^0_t(\Theta, \mathcal{L}(\Theta)) - \Sigma^0_t(\Theta', \mathcal{L}(\Theta')) \big)
\, \Big] 
\leq -C_H \E_{(X, X')}[|X - X'|^2]
\end{aligned}
\end{equation}
and 
\begin{equation}
\label{eq:displacement_monotonicity}
\E_{(X, X')} \left[ \Delta X \cdot \big( G(X, \mathcal{L}(X)) - G(X', \mathcal{L}(X')) \big) \right] 
\geq C_G \E_{(X, X')}[|X - X'|^2]
\end{equation}
where $\E_{(\Theta, \Theta')}$ and $\E_{(X, X')}$ denote the expectations with respect to the joint distributions of $(\Theta, \Theta')$ and $(X, X')$, respectively, and where we have defined
\[
\Delta X = X - X', \quad \Delta Y = Y - Y', \quad \Delta Z = Z - Z', \quad \Delta Z^0 = Z^0 - Z^{0\prime}.
\]
\end{assump}

Next, we introduce the smoothness assumption on the $L$-derivatives of the coefficients. Again, the discussion is deferred to Remark \ref{rem:main_on_assumption}.
\begin{assump}
\label{assump:smoothness_L-derivative}
For $\Phi = B, \Sigma, \Sigma^0, F$ and each fixed $(t,x,y,z,z^0)$, the map $\xi \mapsto \Phi_t(x,y,z,z^0,\xi)$ is twice $L$-differentiable. The first and second order derivatives, denoted by
\begin{align*}\partial_\xi \Phi_t(x,y,z,z^0, \xi, p)  &= \partial_\xi [\Phi_t(x,y,z,z^0, \cdot)] (\xi, p), \\
\partial_{\xi\xi} \Phi_t(x,y,z,z^0, \xi, p,q) &= \partial_{\xi\xi} [\Phi_t(x,y,z,z^0, \cdot)] (\xi, p, q),
\end{align*}are assumed to be bounded and Lipschitz continuous in $\xi$, uniformly with respect to $(t,x,y,z,z^0)$.
Similarly, for each fixed $x$, the map $\mu \mapsto G(x,\mu)$ is twice $L$-differentiable. Its derivatives,
\begin{align*}\partial_\mu G(x,\mu, p)  &= \partial_\mu [G(x,\cdot) ] (\mu, p), \\ 
\partial_{\mu\mu} G(x,\mu, p,q) &= \partial_{\mu\mu} [G(x,\cdot)] (\mu, p, q),
\end{align*}
    are bounded and Lipschitz continuous in  $\mu$ uniformly in $x$.
\end{assump}

We are now ready to state our main convergence result.
\begin{theorem}
\label{thm:main}
Suppose Assumptions~\ref{assump:standard_assump}, \ref{assump:Hamiltonian}, \ref{assump:displacement_monotonicity} and \ref{assump:smoothness_L-derivative} hold. Then the MFG admits a unique MFE, with the representative player's control \(\alpha \in \mathbb{A}\) and state process \(X \in \mathbb{S}^2(\mathbb{F}; \R^n)\).

Furthermore, for any $N \in \mathbb{N}$, let \(\bs{\alpha}^N \in (\mathbb{A}_N)^N\) be the NE of the \(N\)-player game, and denote the corresponding state processes by \((X^{N,1}, \ldots, X^{N,N}) \in (\mathbb{S}^2(\mathbb{F}^N; \R^n))^N\). Then, there exists a constant $C>0$, independent of $N$, such that the following estimates hold:
\[
\frac{1}{N} \sum^N_{i=1}
\E \left[
\sup_{0 \leq t \leq T} |X^i_t - X_t^{N,i}|^2
\right]
\leq \frac{C}{N},
\]
and
\[
\frac{1}{N} \sum^N_{i=1}
\E \left[
    \int_0^T
    |\alpha^i_t - \alpha_t^{N,i}|^2 \, dt
\right]
\leq \frac{C}{N},
\]
where $\{(X^i, \alpha^i)\}_{i=1}^N$ denote the conditionally i.i.d.\ copies of the pair $(X, \alpha)$ as defined in Lemma~\ref{lemma:conditionally_i.i.d.copies}.
\end{theorem}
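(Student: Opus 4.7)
The plan is to realize the three-step forward--backward propagation of chaos programme outlined in Section~\ref{sec:main_theorem}. First, I apply the stochastic maximum principle (to be made rigorous in Section~\ref{sec:characterization_of_equilibria}) to characterize the $N$-player NE as a coupled system of FBSDEs on $(\Omega^N,\mathcal{F}^N,\mathbb{F}^N,\mathbb{P}^N)$ with adjoint processes $(Y^{N,i},Z^{N,i,j},Z^{N,i,0})$. The equilibrium control takes the form $\alpha^{N,i}_t=\Lambda_t(\Theta^{N,i}_t,\mu^N_t,\zeta^{N,i}_t)$ for $\Theta^{N,i}=(X^{N,i},Y^{N,i},Z^{N,i,i},Z^{N,i,0})$ and a correction $\zeta^{N,i}$ (see \eqref{eq:zeta_def}) that encodes the cross-player action interaction specific to extended games. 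In parallel, the MFE is characterized by the conditional MKV-FBSDE whose unique solvability is proved in Appendix~\ref{sec:well_posedness_cond_MKVFBSDE} via the method of continuation under Assumption~\ref{assump:displacement_monotonicity}; this delivers the first part of the theorem. Lemma~\ref{lemma:conditionally_i.i.d.copies} then provides conditionally i.i.d.\ copies $(X^i,Y^i,Z^i,Z^{i,0})_{i=1}^N$ of the MKV solution on $(\Omega^N,\mathcal{F}^N,\mathbb{F}^N,\mathbb{P}^N)$, each driven by the common noise $W^0$ and the idiosyncratic noise $W^i$, with control $\alpha^i_t=\Lambda_t(\Theta^i_t,\mathcal{L}^1(X^1_t),0)$.

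\textbf{Central It\^o computation.} Set $\Delta X^i:=X^{N,i}-X^i$ and analogously $\Delta Y^i,\Delta Z^i,\Delta Z^{0,i}$; write $\Delta\Theta^i$ for the tuple. I apply It\^o's formula to $\Delta X^i\cdot\Delta Y^i$, sum over $i$, and take expectation. Using the definitions \eqref{eq:from_b_to_B}--\eqref{eq:change_space_of_measure}, the drift contributions reproduce precisely the integrand in \eqref{eq:standard_monotonicity} evaluated at $(\Theta^{N,i},\Theta^i)$, except that the $N$-player side carries the empirical measure $L^N(\bs{\Theta}^N_t)$ in place of the conditional law $\mathcal{L}^1(\Theta^1_t)$ used for the MKV copies. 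Adding and subtracting $\Phi_t(\Theta^{N,i}_t,\mathcal{L}^1(\Theta^1_t))$ for $\Phi\in\{B,\Sigma,\Sigma^0,F\}$, the monotonicity of Assumption~\ref{assump:displacement_monotonicity} (applied to the empirical law of the $\Theta^i$'s on the MKV side) together with the terminal bound \eqref{eq:displacement_monotonicity} on $G$ yield
\begin{align*}
\frac{C_G}{N}\sum_{i=1}^N\E|\Delta X^i_T|^2+\frac{C_H}{N}\sum_{i=1}^N\E\!\int_0^T|\Delta X^i_t|^2\,dt\;\leq\;\mathcal{R}^{\mathrm{meas}}_N+\mathcal{R}^{\zeta}_N,
\end{align*}
where $\mathcal{R}^{\mathrm{meas}}_N$ collects the measure-difference residuals and $\mathcal{R}^{\zeta}_N$ the residuals specific to the extended setting.

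\textbf{Controlling $\mathcal{R}^{\mathrm{meas}}_N$.} This is where Assumption~\ref{assump:smoothness_L-derivative} enters. I insert the intermediate empirical measure $\bar L^N_t:=\tfrac{1}{N}\sum_j\delta_{\Theta^j_t}$ built from the i.i.d.\ copies and split each residual as $[\Phi_t(\cdot,L^N(\bs{\Theta}^N_t))-\Phi_t(\cdot,\bar L^N_t)]+[\Phi_t(\cdot,\bar L^N_t)-\Phi_t(\cdot,\mathcal{L}^1(\Theta^1_t))]$. For the first bracket, boundedness and Lipschitz continuity of $\partial_\xi\Phi$ give a bound linear in $\tfrac{1}{N}\sum_j\E|\Delta\Theta^j_t|^2$, which is absorbed into the left-hand side. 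For the second bracket, a second-order Taylor expansion in the measure along a linear interpolation between $\bar L^N_t$ and $\mathcal{L}^1(\Theta^1_t)$, combined with the boundedness of $\partial_{\xi\xi}\Phi$ and the conditional propagation-of-chaos rate $\E[\mathcal{W}_2^2(\bar L^N_t,\mathcal{L}^1(\Theta^1_t))]\leq C/N$ (valid since the $\Theta^j_t$ are conditionally i.i.d.\ with finite second moments), produces the desired $O(1/N)$ bound; the terminal $G$ is treated analogously through its $L$-derivatives.

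\textbf{Main obstacle: the $\zeta$ term and closure.} The step I expect to be the most delicate is the extended-game residual $\mathcal{R}^{\zeta}_N$ tied to $\zeta^{N,i}$: unlike in classical MFGs the $N$-player optimizer has a nontrivial third argument in $\Lambda$. Using the Lipschitz property of $\Lambda$ from Lemma~\ref{lemma:optimizer_of_Hamiltonian} together with the Lipschitz/linear-growth bounds on $\partial_\mu H$ from Assumption~\ref{assump:Hamiltonian}, a direct bookkeeping shows $\E|\zeta^{N,i}_t|^2\leq C/N$ uniformly in $i,t$, so that $\mathcal{R}^{\zeta}_N\leq C/N+\varepsilon\cdot(\text{LHS})$ for any $\varepsilon>0$. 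This is exactly where Assumption~\ref{assump:displacement_monotonicity} is indispensable: the coercivity constants $C_H,C_G$ absorb these Lipschitz perturbations, thereby avoiding the dissipativity/smallness condition on $K_b$ needed in \cite{lauriere2022convergence,possamai2025non}. Combining the three estimates, choosing $\varepsilon$ small, and closing the forward SDE via Gronwall (together with standard BSDE a priori bounds on $\Delta Y,\Delta Z$ driven by the forward error) yields $\tfrac{1}{N}\sum_i\E\sup_{t}|\Delta X^i_t|^2\leq C/N$. The control estimate then follows from $\alpha^{N,i}_t-\alpha^i_t=\Lambda_t(\Theta^{N,i}_t,\mu^N_t,\zeta^{N,i}_t)-\Lambda_t(\Theta^i_t,\mathcal{L}^1(X^1_t),0)$, Lipschitz continuity of $\Lambda$, the already-obtained $O(1/N)$ control on $\Delta\Theta^i$, and the bounds $\E|\zeta^{N,i}_t|^2=O(1/N)$ and $\E[\mathcal{W}_2^2(\mu^N_t,\mathcal{L}^1(X^1_t))]=O(1/N)$.
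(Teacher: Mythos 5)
Your overall architecture matches the paper's: SMP characterization of both equilibria, It\^o's formula applied to $\Delta X^i\cdot\Delta Y^i$ combined with Assumption~\ref{assump:displacement_monotonicity}, control of the measure residuals via Assumption~\ref{assump:smoothness_L-derivative}, and the final step via BDG and the Lipschitz continuity of $\Lambda$. However, two steps as written contain genuine gaps.

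First, your treatment of $\mathcal{R}^{\mathrm{meas}}_N$ invokes the rate $\E[\mathcal{W}_2^2(\bar L^N_t,\mathcal{L}^1(\Theta^1_t))]\leq C/N$ as ``valid since the $\Theta^j_t$ are conditionally i.i.d.\ with finite second moments.'' This is false: for empirical measures of i.i.d.\ samples in dimension $d$, finite second moments only give $\E[\mathcal{W}_2^2]\to 0$, and the Fournier--Guillin rate is dimension-dependent (of order $N^{-2/d}$ for large $d$) unless higher moments are available --- which, as Remark~\ref{rem:main_on_assumption} explains, cannot be obtained pointwise in $t$ for the backward components $Z,Z^0$. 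This is precisely why the paper does \emph{not} go through Wasserstein convergence at all: it bounds $\E\big[|F_t(\Theta^i_t,L^{N,-i}(\bs{\Theta}_t))-F_t(\Theta^i_t,\mathcal{L}^1(\Theta_t))|^2\big]$ directly by $C/N$ using the weak-error estimate of \cite[Lemma~9]{jackson2024quantitative} (based on Chassagneux et al.), which exploits the second-order $L$-differentiability in Assumption~\ref{assump:smoothness_L-derivative} to bypass the $\mathcal{W}_2$ rate entirely. Your second-order Taylor expansion is the right germ of that argument, but as stated it still leans on the false $\mathcal{W}_2^2$ ingredient.

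Second, and more seriously, you assert that ``a direct bookkeeping shows $\E|\zeta^{N,i}_t|^2\leq C/N$ uniformly in $i,t$.'' By its definition \eqref{eq:zeta_def}, $\zeta^i_t$ involves the averages $\frac{1}{N}\sum_j \partial_\nu\varphi^{(2)}(\cdots)\cdot Y^{N,i,j}_t$ (and similarly with $Z^{N,i,j,j}_t$, $Z^{0,N,i,j}_t$), so any bound on $\zeta^i$ requires a priori second-moment estimates on the \emph{entire} off-diagonal adjoint system, which the Lipschitz properties of $\Lambda$ and $\partial_\mu H$ alone do not provide. The paper devotes Lemmas~\ref{lemma:moment_estimate_Y_Z_Z0_ii} and~\ref{lemma:moment_estimate_Y_Z_Z0_iJ} to establishing these bounds; crucially, the estimates for $(X^{N,i},Y^{N,i,i},Z^{N,i,i,i},Z^{0,N,i,i})$ and for $(Y^{N,i,j},Z^{N,i,j,k},Z^{0,N,i,j})_{j\neq i}$ are each expressed in terms of $\sum_i\int_0^T|\zeta^i_t|^2\,dt$, and the loop is closed only by substituting one into the other and taking $N$ large (Proposition~\ref{prop:zeta_convergence}). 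Moreover, the conclusion obtained there is the averaged, time-integrated bound $\E[\int_0^T\sum_{i=1}^N|\zeta^i_t|^2\,dt]\leq C$, not a uniform-in-$(i,t)$ pointwise bound; the latter is neither proved nor needed. This bootstrap through the coupled adjoint system is the main technical content specific to the extended (control-interaction) setting, and it is absent from your proposal.
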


\begin{remark}
\label{rem:main_on_assumption}
Let us elaborate on Assumption \ref{assump:displacement_monotonicity} and \ref{assump:smoothness_L-derivative}.

Theorem~\ref{thm:unique_existence_MFE} guarantees the unique existence of the MFE by establishing the well-posedness of the corresponding conditional MKV-FBSDE system. Notably, this result does not require the additional smoothness condition imposed by Assumption~\ref{assump:smoothness_L-derivative}.
The monotonicity condition in Assumption \ref{assump:displacement_monotonicity} is standard in the literature of the well-posedness of (MKV-)FBSDE, based on the method of continuation argument; see \cite{bensoussan2015well, hu1995solution}, \cite[Appendix~A]{jackson2024quantitative}, and \cite[Chapter~8.4]{zhang2017backward}.

Assumption \ref{assump:smoothness_L-derivative} is inspired by \cite[Assumption~6]{jackson2024quantitative} and imposed solely to invoke \cite[Lemma 9]{jackson2024quantitative}, which relies on the result by Chassagneux et al.\ \cite[Theorem 2.14]{chassagneux2022weak}; see \eqref{eq:E^2_estimate_second_term} at the end of the proof of Theorem \ref{thm:main}.
If the solution $\Theta=(X,Y,Z,Z^0)$ to the FBSDE \eqref{eq:mean_field_system} possesses a finite $q$-th moment $\E[ |\Theta_t|^q ] < \infty$ for some $q>2$, we could effectively dispense with the smoothness assumption and instead employ the result by Fournier and Guillin \cite[Theorem~1]{fournier2015rate}.
However, obtaining pointwise a priori estimates such as $\E[|Z_t|^q] < \infty$ and $\E[|Z^0_t|^q]<\infty$ for the backward components is typically difficult.
Even assuming $\E[\sup_{t\in [0,T]} |X_t|^2] < \infty$ (which is itself subtle in our setting; see \cite[Remark~5]{jackson2024quantitative}), standard a priori estimates for FBSDEs typically yield bounds only on the integrated terms, such as $\E[(\int^T_0|Z_t|^2\, dt )^q]$ and $\E[(\int^T_0|Z^0_t|^2\, dt )^q]$.
For this reason, we adopt this smoothness assumption.
\end{remark}

\begin{remark}
If we restrict our attention to the symmetric NE $\bs{\alpha}^N$---symmetric in the sense that the tuple $\left( (X^1_0, \alpha^{N,1}, W^0, W^1) , \ldots, (X^N_0, \alpha^{N,N}, W^0, W^N) \right)$ is exchangeable---then the equality $\E [ \int^T_0 |\alpha^i_t - \alpha^{i,N}_t|^2 \, dt ] = \E [ \int^T_0 |\alpha^j_t - \alpha^{j,N}_t|^2 \, dt ]$ holds for any $i,j \in \{1, \ldots, N\}$. Consequently, the estimate above can be strengthened to
\[
\E \left[
    \int_0^T
    |\alpha^i_t - \alpha_t^{N,i}|^2 \, dt
\right]
\leq \frac{C}{N},
\]
for all $i \in \{1, \ldots, N\}$.
This holds true, in particular, when the uniqueness of the $N$-player NE is guaranteed. Such uniqueness has been established for games without control interactions; see \cite[Lemma 7]{jackson2024quantitative}. In our setting, however, establishing uniqueness is subtle. For this reason, we rely on the average over $i\in\{1,\ldots,N\}$.
\end{remark}

\section{Characterization of equilibria via the stochastic maximum principle}
\label{sec:characterization_of_equilibria}
In this section, we characterize the equilibria for both the $N$-player game and the MFG by applying Pontryagin's maximum principle.

\subsection{The maximum principle for the $N$-player game}
First, let us define the Hamiltonian for the \( N \)-player game:
\begin{equation}
\label{eq:N-player_Hamiltonian_def}
\begin{split}
    H^{N,i}_t(\bs{x}, \bs{y}, \bs{z}, \bs{z}^{0}, \bs{\alpha}) 
    &\quad := f_t(x^i, \alpha^i, L^N(\bs{x}, \bs{\alpha})) 
        + \sum_{j=1}^N b_t(x^j, \alpha^j, L^N(\bs{x}, \bs{\alpha})) \cdot y^{j} \\
    & \qquad + \sum_{j=1}^N \sigma_t(x^j, \alpha^j, L^N(\bs{x}, \bs{\alpha})) \cdot z^{j} 
        + \sum_{j=1}^N \sigma^0_t(x^j, \alpha^j, L^N(\bs{x}, \bs{\alpha})) \cdot z^{0,j}, 
\end{split}
\end{equation}
and introduce the notation
\[
g^{N,i}(\bs{x}) := g(x^i, L^N(\bs{x})).
\]
For the \(N\)-player game, we establish the necessary condition for optimality.

\begin{lemma}
\label{lemma:smp_N_player}
Let Assumptions~\ref{assump:standard_assump} and~\ref{assump:Hamiltonian} hold, and let \(\bs{\hat{\alpha}} \in (\mathbb{A}_N)^N\) be a NE. Then there exists a solution \((Y^{i,j}, Z^{i,j,k}, Z^{0,i,j})_{(i,j,k) \in \{1, \ldots, N\}^3}\), where \(Y^{i,j} \in \mathbb{S}^2(\mathbb{F}^N; \R^n)\), and \(Z^{i,j,k}, Z^{0,i,j} \in \mathbb{H}^2(\mathbb{F}^N; \R^{n \times d})\), satisfying the following adjoint system:
\begin{equation}
    \left\{
        \begin{aligned}
            &dY_t^{i,j} = -\partial_{x_j} H^{N,i}_t(\bs{X}_t^{\hat{\alpha}}, \bs{Y_t}^{i,:}, \bs{Z_t}^{i,:}, \bs{Z_t}^{0,i,:},          
                \bs{\hat{\alpha}}_t)\, dt 
                + \sum_{k=1}^N Z_t^{i,j,k}\, dW_t^k + Z_t^{0,i,j}\, dW_t^0, \\
            &Y_T^{i,j} = \partial_{x_j} g^{N,i}(\bs{X}_T^{\hat{\alpha}}),
            \label{eq:N-player_backward}
        \end{aligned}
    \right.
\end{equation}
where we denote 
\[
\bs{Y}_t^{i,:} := (Y_t^{i,j})_{j =1,\ldots,N}, \quad 
\bs{Z}_t^{i,:} := (Z_t^{i,j,j})_{j=1,\ldots,N}, \quad 
\bs{Z}_t^{0,i,:} := (Z_t^{0,i,j})_{j=1,\ldots,N}.
\]

Moreover, it holds that
\[
\partial_{\alpha^i} H^{N,i}_t(\bs{X}_t^{\hat{\alpha}}, \bs{Y}_t^{i,:}, \bs{Z}_t^{i,:}, \bs{Z}_t^{0,i,:}, \bs{\hat{\alpha}}_t) \cdot (\beta_t^i - \hat{\alpha}_t^i) \geq 0, \quad dt \otimes \mathbb{P}\text{-a.e.},
\]
for any \(\beta^i \in \mathbb{A}_N\) and all \(i = 1,2,\ldots, N\).
\end{lemma}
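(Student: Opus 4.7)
The plan is to follow the classical Pontryagin stochastic maximum principle, adapted to the $N$-player game with empirical-measure interactions. The proof naturally splits into three parts: existence for the adjoint system, variational calculus on the cost, and duality via Itô's formula. For existence of the adjoint, note that for each fixed $i \in \{1,\ldots,N\}$, the system \eqref{eq:N-player_backward} is a coupled system of $N$ linear BSDEs indexed by $j$ whose driver $\partial_{x_j} H^{N,i}$ is linear in $(Y,Z,Z^0)$ with bounded coefficients (the pointwise derivatives $\partial_x b, \partial_x \sigma, \partial_x \sigma^0$ are bounded by the Lipschitz condition, and the $L$-derivatives $\partial_\mu b, \partial_\mu \sigma, \partial_\mu \sigma^0$ are bounded by the boundedness clause of Assumption~\ref{assump:standard_assump}), plus an $L^2$-integrable forcing term involving $\partial_x f$ and $\partial_\mu f$ evaluated along the equilibrium trajectory; their linear growth combined with the standard moment bound for $\bs{X}^{\bs{\hat{\alpha}}}$ gives the requisite integrability. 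The terminal data $\partial_{x_j} g^{N,i}(\bs{X}_T^{\bs{\hat{\alpha}}})$ is in $L^2$ for the same reason. Standard Pardoux--Peng theory applied to this Lipschitz BSDE system then yields a unique solution in the announced spaces.

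For the variational step, fix $i$, pick $\beta^i \in \mathbb{A}_N$, and set $\eta^i := \beta^i - \hat{\alpha}^i$ and $\alpha^{i,\epsilon} := \hat{\alpha}^i + \epsilon \eta^i$. Denote by $X^{j,\epsilon}$ the state of player $j$ under the perturbed profile $(\alpha^{i,\epsilon}; \bs{\hat{\alpha}}^{-i})$. Since player $i$'s control enters the empirical distribution $L^N$, every $X^{j,\epsilon}$---not only $X^{i,\epsilon}$---depends on $\epsilon$. Define the variation processes $V^{i,j}_t := \lim_{\epsilon \downarrow 0} \epsilon^{-1}(X^{j,\epsilon}_t - X^{j,\bs{\hat{\alpha}}}_t)$; differentiating the forward SDEs at $\epsilon = 0$, these satisfy a coupled linear SDE system driven by $\partial_x b, \partial_x \sigma, \partial_x \sigma^0$ and by the $L$-derivatives $\partial_\mu b, \partial_\mu \sigma, \partial_\mu \sigma^0$ of the equilibrium coefficients, with inhomogeneity produced by $\eta^i$ (which enters directly in the dynamics of $X^i$ and through $L^N$ in every $X^j$). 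A Gronwall argument combined with the Lipschitz and boundedness hypotheses yields uniform $L^2$ bounds on $V^{i,j}$ and validates the first-order expansion.

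The key computation is then to differentiate $\epsilon \mapsto J(\alpha^{i,\epsilon}; \bs{\hat{\alpha}}^{-i})$ at $\epsilon = 0$, producing an expression linear in $(V^{i,j}_t)_j$, $(V^{i,j}_T)_j$, and $\eta^i$ involving $\partial_x f$, $\partial_\alpha f$, $\partial_\mu f$, $\partial_x g$, and $\partial_\mu g$. Applying Itô's formula to $\sum_{j=1}^N Y^{i,j}_t \cdot V^{i,j}_t$ and substituting the BSDE \eqref{eq:N-player_backward} and the linear SDE for $V^{i,j}$, every state-variation term cancels by the very definition of $\partial_{x_j} H^{N,i}$ and the terminal condition $\partial_{x_j} g^{N,i}$, leaving the identity
\[
\frac{d}{d\epsilon}\Big|_{\epsilon = 0} J(\alpha^{i,\epsilon}; \bs{\hat{\alpha}}^{-i}) = \E \int_0^T \partial_{\alpha^i} H^{N,i}_t\big(\bs{X}^{\bs{\hat{\alpha}}}_t, \bs{Y}^{i,:}_t, \bs{Z}^{i,:}_t, \bs{Z}^{0,i,:}_t, \bs{\hat{\alpha}}_t\big) \cdot \eta^i_t \, dt.
\]
The Nash equilibrium property of $\bs{\hat{\alpha}}$ forces this quantity to be non-negative for every $\beta^i \in \mathbb{A}_N$; a standard localization using perturbations of the form $\beta^i = \hat{\alpha}^i + v\, \1_A$ for arbitrary bounded $\mathbb{F}^N$-progressive $v$ and measurable $A$ then yields the pointwise inequality $dt \otimes \mathbb{P}^N$-almost everywhere.

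The main technical obstacle is the bookkeeping of the $N \times N$ couplings induced by the empirical measure: perturbing player $i$'s control alters every $X^j$ through $L^N$, so one must carry the full vector $(V^{i,j})_{j=1}^N$ in duality with the full adjoint vector $(Y^{i,j})_{j=1}^N$. The $1/N$ factors produced by the $L$-derivatives of $L^N$ must match precisely the combinatorics of the sums over $k$ inside $\partial_{x_j} H^{N,i}$; only with this exact matching does the Itô expansion of $\sum_j Y^{i,j} \cdot V^{i,j}$ collapse to the single integral against $\partial_{\alpha^i} H^{N,i}$ displayed above. Justifying the interchange of differentiation and limit relies on the moment bounds for $V^{i,j}$ together with the growth conditions of Assumption~\ref{assump:standard_assump}.
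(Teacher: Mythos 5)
Your proposal is correct and follows exactly the route the paper takes: the paper's proof simply observes that $\hat{\alpha}^i$ is optimal for player $i$'s control problem given $\bs{\hat{\alpha}}^{-i}$ and then invokes the standard Pontryagin stochastic maximum principle (citing \cite[Theorem~2.15]{carmona2018probabilistic_I}), which is precisely the argument you carry out in detail (linear adjoint BSDE solvable by Pardoux--Peng, variation processes $V^{i,j}$ accounting for the dependence of every $X^j$ on $\alpha^i$ through $L^N$, duality via It\^o on $\sum_j Y^{i,j}\cdot V^{i,j}$, and localization to obtain the pointwise variational inequality). No gaps; you have simply unpacked the citation.
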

\begin{proof}
Since \(\bs{\hat{\alpha}}\) is a NE, the definition of NE implies that \(\hat{\alpha}^i\) satisfies the following optimality condition:
\[
J(\hat{\alpha}^i; \bs{\hat{\alpha}}^{-i}) = \inf_{\beta \in \mathbb{A}_N} J(\beta; \bs{\hat{\alpha}}^{-i}).
\]
Therefore, we may apply the standard stochastic maximum principle argument; see, for example, \cite[Theorem~2.15]{carmona2018probabilistic_I}.
\end{proof}

For later use, we compute the driver of the above adjoint equation:
\begin{align*}
    \partial_{x_j} H^{N,i}_t( \bs{x},  \bs{y}, \bs{z}, \bs{z}^{0}, \bs{\alpha}) 
    &=
        \delta_{i,j} \partial_x f_t(x^i, \alpha^i, L^N(\bs{x}, \bs{\alpha})) 
            + \frac{1}{N} \partial_\mu f_t(x^i, \alpha^i, L^N(\bs{x}, \bs{\alpha}))(x^j) \\
        &\quad + \partial_x b_t(x^j, \alpha^j, L^N(\bs{x}, \bs{\alpha})) \cdot y^{j}
            + \frac{1}{N} \sum_{k=1}^N \partial_\mu b_t(x^k, \alpha^k, L^N(\bs{x}, \bs{\alpha}))(x^j) \cdot y^{k} \\
        &\quad + \partial_x \sigma_t(x^j, \alpha^j, L^N(\bs{x}, \bs{\alpha})) \cdot z^{j}
            + \frac{1}{N} \sum_{k=1}^N \partial_\mu \sigma_t(x^k, \alpha^k, L^N(\bs{x}, \bs{\alpha}))(x^j) \cdot z^{k} \\
        &\quad + \partial_x \sigma^0_t(x^j, \alpha^j, L^N(\bs{x}, \bs{\alpha})) \cdot z^{0,j}
        + \frac{1}{N} \sum_{k=1}^N \partial_\mu \sigma^0_t(x^k, \alpha^k, L^N(\bs{x}, \bs{\alpha}))(x^j) \cdot z^{0,k}.
\end{align*}
We also compute the terminal condition:
\begin{align*}
\partial_{x_j} g^{N,i}(\bs{x}) = \delta_{i,j} \partial_x g(x^i, L^N(\bs{x})) + \frac{1}{N} \partial_\mu g(x^i, L^N(\bs{x}))(x^j).
\end{align*}
For the computation of the derivative of functions of empirical measures, see~\cite[Proposition~5.35]{carmona2018probabilistic_I}.

\subsection{The maximum principle for the mean field game}
Next, we provide the necessary and sufficient conditions for the existence of an MFE. The following result is standard; see, for example, \cite[Theorem~2.15 and~2.16]{carmona2018probabilistic_I}.
\begin{lemma}
\label{lemma:SMP_MFG}
Let Assumptions~\ref{assump:standard_assump} and~\ref{assump:Hamiltonian} hold, and let \((\hat{\alpha}, \hat{\xi})\) be an MFE such that $\E\int_0^T M^2_2(\hat{\xi}_t) \, dt  < \infty$. 
Then there exists a unique solution \((X^{\hat{\alpha}, \hat{\xi}}, Y, Z, Z^0)\), where \(X^{\hat{\alpha}, \hat{\xi}} \in \mathbb{S}^2(\mathbb{F}; \R^n)\), \(Y \in \mathbb{S}^2(\mathbb{F}; \R^n)\), \(Z \in \mathbb{H}^2(\mathbb{F}; \R^{n \times d})\), and \(Z^0 \in \mathbb{H}^2(\mathbb{F}; \R^{n \times d})\), to the FBSDE system:
\begin{equation}
\left\{
    \begin{aligned}
        & d X_t^{\hat{\alpha}, \hat{\xi}} = b_t(X_t^{\hat{\alpha}, \hat{\xi}}, \hat{\alpha}_t, \hat{\xi}_t)\, dt 
            + \sigma_t(X_t^{\hat{\alpha}, \hat{\xi}}, \hat{\alpha}_t, \hat{\xi}_t)\, dW_t 
            + \sigma^0_t(X_t^{\hat{\alpha}, \hat{\xi}}, \hat{\alpha}_t, \hat{\xi}_t)\, dW_t^0, 
            \quad  X_0^{\hat{\alpha}, \hat{\xi}} = X^1_0, 
        \\
        &dY_t = -\partial_x H_t(X_t^{\hat{\alpha}, \hat{\xi}},  Y_t, Z_t, Z_t^0, \hat{\alpha}_t, \hat{\xi}_t)\, dt  +  Z_t\, dW_t + Z_t^0\, dW_t^0, \quad Y_T = \partial_x g(X_T^{\hat{\alpha}, \hat{\xi}}, \hat{\mu}_T),
    \end{aligned}
\right.
\label{eq:mean_field_system}
\end{equation}
and it holds that for any \(\beta \in \R^{\ell}\),
\begin{align}
\label{eq:hamiltonian_optimality}
H_t( X_t^{\hat{\alpha}, \hat{\xi}},  Y_t, Z_t, Z_t^0, \beta,\hat{\xi}_t) 
\geq H_t( X_t^{\hat{\alpha}, \hat{\xi}}, Y_t, Z_t, Z_t^0,  \hat{\alpha}_t, \hat{\xi}_t), 
\quad dt \otimes \mathbb{P}\text{-a.e.}
\end{align}

Conversely, suppose there exists a solution to the system~\eqref{eq:mean_field_system} for \(\hat{\alpha} \in \mathbb{A}\) and for an \(\mathcal{P}_2(\R^n \times \R^{\ell})\)-valued, \(\mathbb{F}^0\)-progressively process \(\xi^{\hat{\alpha}}\) with $\E \int_0^T M^2_2(\xi^{\hat{\alpha}}_t) \, dt  < \infty$, such that \(\hat{\alpha}\) satisfies~\eqref{eq:hamiltonian_optimality} and \(\xi_t^{\hat{\alpha}} = \mathcal{L}^1(X_t^{\hat{\alpha}}, \hat{\alpha}_t)\), \(\mathbb{P}^0\)-a.s.\ for all \(t \in [0, T]\). Then \((\hat{\alpha}, \xi^{\hat{\alpha}})\) is an MFE.
\end{lemma}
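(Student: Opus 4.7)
The plan is to treat the representative agent's problem---given the $\mathbb{F}^0$-progressively measurable flow $\hat{\xi}$ (or $\xi^{\hat{\alpha}}$) as an exogenous input---as a standard stochastic control problem with random time-dependent coefficients, and to invoke the classical Pontryagin maximum principle in both its necessary and sufficient forms, as in Theorems~2.15 and~2.16 of \cite{carmona2018probabilistic_I}. Because a single representative player cannot influence the measure flow, the common noise introduces no genuinely new difficulty beyond measurability bookkeeping: once the flow is frozen, we are reduced to a standard controlled SDE driven by $\mathbb{F}$.

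For the necessary direction, I would fix an MFE $(\hat{\alpha},\hat{\xi})$ with $\E\int_0^T M_2^2(\hat{\xi}_t)\,dt<\infty$. By the Lipschitz clause of Assumption~\ref{assump:standard_assump}, the coefficients $b_t(\cdot,\cdot,\hat{\xi}_t),\sigma_t(\cdot,\cdot,\hat{\xi}_t),\sigma^0_t(\cdot,\cdot,\hat{\xi}_t)$ are Lipschitz in $(x,a)$ uniformly in $(t,\omega)$, so standard SDE theory delivers a unique $X^{\hat{\alpha},\hat{\xi}}\in\mathbb{S}^2(\mathbb{F};\R^n)$ with finite second moment. The adjoint BSDE in \eqref{eq:mean_field_system} is linear in $(Y,Z,Z^0)$: the coefficients $\partial_x b,\partial_x\sigma,\partial_x\sigma^0$ are bounded by $L_f$ (from Lipschitzness in $x$), the inhomogeneous term $\partial_x f_t$ is square integrable by its linear growth together with the moment bounds on $(X^{\hat{\alpha},\hat{\xi}},\hat{\alpha},\hat{\xi})$, and the terminal $\partial_x g(X^{\hat{\alpha},\hat{\xi}}_T,\hat{\mu}_T)$ is square integrable by the linear growth of $\partial_x g$. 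Standard linear BSDE theory therefore produces a unique $(Y,Z,Z^0)\in\mathbb{S}^2\times\mathbb{H}^2\times\mathbb{H}^2$. A convex perturbation $\hat{\alpha}+\varepsilon(\beta-\hat{\alpha})$ combined with Itô's formula then gives, upon sending $\varepsilon\downarrow 0$, the variational inequality $\partial_\alpha H_t(X^{\hat{\alpha},\hat{\xi}}_t,Y_t,Z_t,Z^0_t,\hat{\alpha}_t,\hat{\xi}_t)\cdot(\beta_t-\hat{\alpha}_t)\ge 0$ for every $\beta\in\mathbb{A}$, and by strong convexity of $a\mapsto H_t(\cdots,a,\cdots)$ from Assumption~\ref{assump:Hamiltonian} this upgrades to the global Hamiltonian minimality \eqref{eq:hamiltonian_optimality}.

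For the sufficient direction, assume we are given $(\hat{\alpha},\xi^{\hat{\alpha}})$ together with a solution to \eqref{eq:mean_field_system} satisfying both \eqref{eq:hamiltonian_optimality} and the consistency $\xi^{\hat{\alpha}}_t=\mathcal{L}^1(X^{\hat{\alpha},\xi^{\hat{\alpha}}}_t,\hat{\alpha}_t)$; only optimality remains. For any $\beta\in\mathbb{A}$ I would set $X^\beta=X^{\beta,\xi^{\hat{\alpha}}}$ and $X^{\hat{\alpha}}=X^{\hat{\alpha},\xi^{\hat{\alpha}}}$. Convexity of $g(\cdot,\mu)$ in $x$ together with $Y_T=\partial_x g(X^{\hat{\alpha}}_T,\hat{\mu}_T)$ yields the terminal bound $\E[g(X^\beta_T,\hat{\mu}_T)-g(X^{\hat{\alpha}}_T,\hat{\mu}_T)]\ge\E[Y_T\cdot(X^\beta_T-X^{\hat{\alpha}}_T)]$. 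Applying Itô's formula to $Y_t\cdot(X^\beta_t-X^{\hat{\alpha}}_t)$, taking expectations, using convexity of $H$ in $x$ (Assumption~\ref{assump:Hamiltonian}) and the pointwise Hamiltonian minimality \eqref{eq:hamiltonian_optimality}, the Brownian integrals vanish and the remaining terms reassemble into a lower bound on the running-cost difference, which delivers $J^{\xi^{\hat{\alpha}}}(\hat{\alpha})\le J^{\xi^{\hat{\alpha}}}(\beta)$. This is exactly the verification calculation of \cite[Theorem~2.16]{carmona2018probabilistic_I}.

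I do not anticipate any substantive technical obstacle: the only conceptually delicate point is the common noise, but it is invisible at this stage since $\hat{\xi}$ is exogenous to the representative agent, and all the required Lipschitz, boundedness, convexity and linear-growth properties are supplied by Assumptions~\ref{assump:standard_assump} and~\ref{assump:Hamiltonian}. The proof is therefore a direct invocation of the classical SMP for random coefficients, applied on the filtered probability space $(\Omega,\mathcal{F},\mathbb{F},\mathbb{P})$ with the $\mathbb{F}^0$-adapted input $\hat{\xi}$.
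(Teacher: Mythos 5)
Your proposal matches the paper's treatment: the paper gives no separate argument for this lemma and simply invokes the classical necessary and sufficient stochastic maximum principle for a control problem with the measure flow frozen as an exogenous random coefficient, citing \cite[Theorems~2.15 and~2.16]{carmona2018probabilistic_I}, which is exactly the route you spell out (convex perturbation plus adjoint BSDE for necessity, It\^o on $Y_t\cdot(X^\beta_t-X^{\hat\alpha}_t)$ with convexity of $g$ and $H$ for sufficiency). No substantive difference.
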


Finally, we state the well-posedness of the MFG.
\begin{theorem}
\label{thm:unique_existence_MFE}
Let Assumptions \ref{assump:standard_assump}, \ref{assump:Hamiltonian}, \ref{assump:displacement_monotonicity} hold. Then, the MFG admits a unique MFE.
\end{theorem}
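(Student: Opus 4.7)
The plan is to reduce existence and uniqueness of the MFE to the well-posedness of a conditional McKean--Vlasov FBSDE, by combining the stochastic maximum principle (Lemma~\ref{lemma:SMP_MFG}) with the Hamiltonian minimizer representation (Lemma~\ref{lemma:optimizer_of_Hamiltonian}). The actual well-posedness is then delivered by the method of continuation, which is carried out in Appendix~\ref{sec:well_posedness_cond_MKVFBSDE} under the monotonicity condition of Assumption~\ref{assump:displacement_monotonicity}.

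More concretely, suppose $(\hat{\alpha}, \hat{\xi})$ is an MFE. The necessary direction of Lemma~\ref{lemma:SMP_MFG} produces a quadruple $(X,Y,Z,Z^0)\in \mathbb{S}^2\times\mathbb{S}^2\times\mathbb{H}^2\times\mathbb{H}^2$ solving~\eqref{eq:mean_field_system} with $\hat{\alpha}$ minimizing the Hamiltonian pointwise. Since in the MFG the representative agent treats $\hat{\xi}$ as an exogenous flow during optimization, no interaction term in the control arises, and the first-order condition of Lemma~\ref{lemma:optimizer_of_Hamiltonian} (with vanishing $\zeta$) yields
\[
\hat{\alpha}_t = \Lambda_t\bigl(X_t, Y_t, Z_t, Z_t^0, \hat{\mu}_t, 0\bigr),
\]
where $\hat{\mu}_t$ is the first marginal of $\hat{\xi}_t$ on $\R^n$. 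Using the consistency constraint $\hat{\xi}_t = \mathcal{L}^1(X_t,\hat{\alpha}_t)$ together with the pushforward $\varphi_t$ from~\eqref{eq:change_space_of_measure}, $\hat{\xi}_t$ is entirely determined by $\mathcal{L}^1(\Theta_t)$ with $\Theta_t := (X_t,Y_t,Z_t,Z_t^0)$, and $\Theta$ solves the conditional MKV-FBSDE
\[
\left\{
\begin{aligned}
dX_t &= B_t\bigl(\Theta_t, \mathcal{L}^1(\Theta_t)\bigr)\, dt + \Sigma_t\bigl(\Theta_t, \mathcal{L}^1(\Theta_t)\bigr)\, dW_t + \Sigma^0_t\bigl(\Theta_t, \mathcal{L}^1(\Theta_t)\bigr)\, dW_t^0,\\
dY_t &= - F_t\bigl(\Theta_t, \mathcal{L}^1(\Theta_t)\bigr)\, dt + Z_t\, dW_t + Z_t^0\, dW_t^0,\\
X_0 &= X_0^1,\qquad Y_T = G\bigl(X_T, \mathcal{L}^1(X_T)\bigr),
\end{aligned}
\right.
\]
with $(B,\Sigma,\Sigma^0,F,G)$ defined in~\eqref{eq:from_b_to_B}. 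Conversely, given a solution $\Theta$ of this system, defining $\hat{\alpha}_t := \Lambda_t(\Theta_t,\hat{\mu}_t,0)$ and $\hat{\xi}_t := \mathcal{L}^1(X_t,\hat{\alpha}_t)$ and invoking the sufficient direction of Lemma~\ref{lemma:SMP_MFG} (which uses the convexity in~\ref{assump:convex_g_H}) recovers an MFE. Hence existence and uniqueness of the MFE are equivalent to existence and uniqueness of the solution to this MKV-FBSDE.

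The remaining task is the well-posedness of the displayed MKV-FBSDE. I would establish it by the method of continuation along a one-parameter family linking a trivially solvable decoupled system to the target one. Since $\Lambda$ is Lipschitz and linearly growing (Lemma~\ref{lemma:optimizer_of_Hamiltonian}), the driver and diffusion coefficients $B,\Sigma,\Sigma^0,F,G$ inherit Lipschitz continuity from Assumptions~\ref{assump:standard_assump} and~\ref{assump:Hamiltonian}. The monotonicity inequalities~\eqref{eq:standard_monotonicity}--\eqref{eq:displacement_monotonicity} then furnish the critical a priori stability estimate for the difference of two solutions, which allows the continuation parameter to advance in uniformly bounded increments. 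This is precisely the content of Appendix~\ref{sec:well_posedness_cond_MKVFBSDE}, which I would invoke directly.

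The main obstacle I anticipate lies in this a priori estimate. The system is fully coupled, with both $\Sigma$ and $\Sigma^0$ depending on the backward variables $(Y,Z,Z^0)$ via $\Lambda$, and every coefficient depends on the conditional law. Applying It\^o's formula to $\Delta X_t \cdot \Delta Y_t$ produces cross-terms involving $\Delta Z$ and $\Delta Z^0$ that must be absorbed on the favorable side. Assumption~\ref{assump:displacement_monotonicity} is designed precisely for this structure: the four-way pairing in~\eqref{eq:standard_monotonicity} together with the displacement inequality in~\eqref{eq:displacement_monotonicity} forces these cross-terms to collapse to a sign-definite quantity, which in turn yields the contraction needed for the continuation scheme. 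Handling the conditional law in the presence of the common noise adds a technical layer (conditional expectations and $\mathbb{F}^0$-measurable processes replacing their unconditional counterparts), but the standard Peng--Wu/Hu--Peng--Bensoussan-type argument adapts without conceptual change.
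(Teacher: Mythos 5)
Your proposal is correct and follows essentially the same route as the paper: reduce the MFE to the conditional MKV-FBSDE \eqref{eq:mean_field_MKVFBSDE} via Lemma~\ref{lemma:SMP_MFG} and the minimizer $\Lambda$ with $\zeta=0$, then invoke the continuation result of Appendix~\ref{sec:well_posedness_cond_MKVFBSDE} under Assumption~\ref{assump:displacement_monotonicity}. The only step you compress is the verification that $\xi\mapsto\varphi_t(\xi)$ is Lipschitz in $\mathcal{W}_2$ (which the paper proves explicitly via Kantorovich duality before applying Theorem~\ref{thm:wellposedness_conditional_MKVFBSDE}), but you correctly identify the Lipschitz continuity of $\Lambda$ as the ingredient that makes this work.
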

Proof of this theorem is provided at the end of Section~\ref{sec:well_posedness_cond_MKVFBSDE} using the well-posedness result for the following conditional MKV-FBSDE:
\begin{equation}
\label{eq:mean_field_MKVFBSDE}
\left\{
\begin{aligned}
&d X_t = B_t\left(\Theta_t, \mathcal{L}^1 (\Theta_t)\right)\, dt 
+ \Sigma_t\left(\Theta_t, \mathcal{L}^1 (\Theta_t)\right) \, dW_t 
+ \Sigma^0_t\left(\Theta_t, \mathcal{L}^1 (\Theta_t)\right) \, dW^0_t 
\\
&dY_t =  -F_t\left(\Theta_t, \mathcal{L}^1 (\Theta_t)\right) \, dt 
+ Z_t \, dW_t + Z^{0}_t \, dW^0_t 
\\
& X_0  = X^1_0, \hspace{1cm} Y_T = G\left(X_T, \mathcal{L}^1 (X_T)\right),
\end{aligned}
\right.
\end{equation}
where $\Theta = (X, Y, Z, Z^0)$.

\section{Proof of Theorem \ref{thm:main}}
\label{sec:proof_of_main_theorem}
Throughout this section, we adhere to the same assumptions and notation as in Theorem~\ref{thm:main}.  
In particular, we assume that Assumptions~\ref{assump:standard_assump}, \ref{assump:Hamiltonian}, \ref{assump:displacement_monotonicity}, and \ref{assump:smoothness_L-derivative} are satisfied.
Furthermore, let \(\bs{\alpha}^N \in \mathbb{A}_N^N\) be an \(N\)-player NE, which is assumed to exist.  
Denote the corresponding state process by \((X^{N,1}, \ldots, X^{N,N}) \in (\mathbb{S}^2(\mathbb{F}^N; \R^n))^N\).  
The associated adjoint system, as given in Lemma~\ref{lemma:smp_N_player}, is denoted by  
\begin{align*}
&(Y^{N,i,j})_{(i,j)\in \{ 1, \ldots N \}^2} \in (\mathbb{S}^2(\mathbb{F}^N;\R^n))^{N\times N}, \\
&(Z^{N,i,j,k})_{(i,j,k)\in \{ 1, \ldots N \}^3} \in (\mathbb{H}^2(\mathbb{F}^N;\R^{n \times d}))^{N^3}, \\
&(Z^{0,N,i,j})_{(i,j)\in \{ 1, \ldots N \}^2} \in (\mathbb{H}^2(\mathbb{F}^N;\R^{n\times d}))^{N^2},    
\end{align*}
and we denote 
\[
\Theta^{N,i} = (X^{N,i}, Y^{N,i,i}, Z^{N,i,i,i}, Z^{0,N,i,i}),
\quad 
\bs{\Theta}^{N} = (\Theta^{N,1}, \ldots, \Theta^{N,N}).
\]
Moreover, we define the process \((\zeta_t^i)_{t \in [0,T]}\) by
\begin{equation}
    \begin{split}
        \zeta^i_t &:= \frac{1}{N} \, \partial_{\nu} f^{(2)}_t \big( X^{N,i}_t, L^N(\bs{X}^N_t, \bs{\alpha}^N_t)\big)(\alpha^{N,i}_t) \\
            &\quad+ \frac{1}{N} \sum_{j=1}^N \partial_{\nu} b^{(2)}_t \big(X^{N,j}_t, L^N(\bs{X}^N_t, \bs{\alpha}^N_t)\big)(\alpha^{N,i}_t) \cdot Y^{N,i,j}_t \\ 
            &\quad + \frac{1}{N} \sum_{j=1}^N \partial_{\nu} \sigma^{(2)}_t \big( X^{N,j}_t, L^N(\bs{X}^N_t, \bs{\alpha}^N_t)\big)(\alpha^{N,i}_t) \cdot Z^{N,i,j,j}_t \\
            &\quad + \frac{1}{N} \sum_{j=1}^N \partial_{\nu} \sigma^{0,(2)}_t \big(X^{N,j}_t, L^N(\bs{X}^N_t, \bs{\alpha}^N_t)\big)(\alpha^{N,i}_t) \cdot Z^{0,N,i,j}_t.
    \end{split}
\label{eq:zeta_def}
\end{equation}
Then, using the optimizer \(\Lambda\) from Lemma~\ref{lemma:optimizer_of_Hamiltonian} and Proposition~\ref{lemma:smp_N_player}, we obtain
\[
\alpha^{N,i}_t = \Lambda_t\big( \Theta^{N,i}_t, L^N(\bs{X}_t^N), \zeta^i_t\big), \quad dt \otimes \mathbb{P}^N\text{-a.e.} 
\]
This follows from the strict convexity of the Hamiltonian \(H\) (more precisely, of \(H + a \cdot \zeta\)) with respect to $a$.

In light of Lemma~\ref{lemma:smp_N_player}, $\bs{\Theta}^N$ satisfies the following system:
\begin{equation}
    \label{eq:N-player_systems_FBSDEs}
    \left\{
    \begin{aligned}
        &d X^{N,i}_t = B^i_t\left(\bs{\Theta}^N_t, \bs{\zeta}_t\right)\, dt 
        + \Sigma^i_t\left(\bs{\Theta}^N_t, \bs{\zeta}_t\right) \, dW^i_t 
        + \Sigma^{0,i}_t\left(\bs{\Theta}^N_t, \bs{\zeta}_t\right) \, dW^0_t ,
        \\
        &X^{N,i}_0  = X^{i}_0, \\
        &dY^{N,i,i}_t =  -\Bigg[F^i_t\left(\bs{\Theta}^N_t, \bs{\zeta}_t\right)
        + \frac{1}{N} 
                \partial_{\mu} H^{N,i}_t \bigl( 
                    \bs{X}^N_t, \bs{Y}^{N,i,:}_t, \bs{Z}^{N,i,:}_t, \bs{Z}^{0,N,i,:}_t, \bs{\alpha}^N_t
                \bigr)( X_t^{N,i} )\,
                \Bigg]dt 
        \\
        &\qquad \qquad \qquad + \sum^N_{k=1} Z^{N,i,i,k}_t \, dW^i_t + Z^{0,i,i}_t \, dW^0_t ,
        \\
        &  Y^{N,i}_T = G\left(X^{N,i}_T, L^N (\bs{X}^{N}_T)\right) 
        + \frac{1}{N} \partial_{\mu} g\left(
            X^{N,i}_T, L^N (\bs{X}^{N}_T) 
            \right)( X^{N,i}_T ),
    \end{aligned}
\right.
\end{equation}
where we denote
\begin{equation}
\label{eq:tilde_coefficients}
\begin{aligned}
{B}^i_t(\boldsymbol{\theta},\boldsymbol{\zeta})
 &:= b_t\big(x^i, \Lambda_t(\theta^i, L^N(\bs{x}), \zeta^i), L^N(\bs{x}, \boldsymbol{\Lambda}_t)\big), \\
{\Sigma}^i_t(\boldsymbol{\theta},\boldsymbol{\zeta})
 &:= \sigma_t\big(x^i, \Lambda_t(\theta^i, L^N(\bs{x}), \zeta^i), L^N(\bs{x}, \boldsymbol{\Lambda}_t)\big), \\
{\Sigma}^{0,i}_t(\boldsymbol{\theta},\boldsymbol{\zeta})
 &:= \sigma^0_t\big( x^i, \Lambda_t(\theta^i, L^N(\bs{x}), \zeta^i), L^N(\bs{x}, \boldsymbol{\Lambda}_t)\big), \\
{F}^i_t(\boldsymbol{\theta},\boldsymbol{\zeta})
 &:= \partial_x H_t\big( x^i, \Lambda_t(\theta^i, L^N(\bs{x}), \zeta^i), L^N(\bs{x}, \boldsymbol{\Lambda}_t)\big).
\end{aligned}
\end{equation}
for 
\[
\theta^i = (x^i, y^i, z^i, z^{0,i}), \quad  \bs{\theta} = (\theta^1, \ldots, \theta^N), \quad \bs{x} = (x^1, \ldots, x^N), \quad \boldsymbol{\zeta} = (\zeta^1, \ldots, \zeta^N),
\]
and 
\[
\bs{\Lambda}_t = (\Lambda_t(\theta^1, L^N(\bs{x}), \zeta^1), \ldots, \Lambda_t(\theta^N, L^N(\bs{x}), \zeta^N)).
\]

Next, we denote by \(\Theta := (X, Y, Z, Z^0)\) the solution of the conditional MKV-FBSDE \eqref{eq:mean_field_MKVFBSDE}, defined on \((\Omega, \mathcal{F}, \mathbb{P})\).  
By (the proof of) Theorem \ref{thm:unique_existence_MFE} and \ref{thm:wellposedness_conditional_MKVFBSDE}, the unique existence of such a solution is guaranteed, and the pair 
\[
\alpha_t = \Lambda_t( \Theta_t, \mathcal{L}^1(X_t), 0), \,\, \text{and $\mathbb{F}^0$-progressively measurable  process}\,\,
\left(\mathcal{L}^1(X_t, \alpha_t ) \right)_{t\in[0,T]}, 
\]
is the unique MFE. 
Therefore, as in Lemma~\ref{lemma:iid_copies_solves_MKVFBSDE} and the preceding discussion, we can construct a sequence of stochastic processes \((\Theta^i = (X^i, Y^i, Z^i, Z^{0,i}))_{i \in \{1, \ldots, N\}}\) such that, for each \(i\),  $\Theta^i \in \mathbb{S}^2(\mathbb{F}^N;\R^n) \times \mathbb{S}^2(\mathbb{F}^N;\R^n) \times \mathbb{H}^2(\mathbb{F}^N;\R^n) \times \mathbb{H}^2(\mathbb{F}^N;\R^n)$ is the unique solution to~\eqref{eq:mean_field_MKVFBSDE} on \((\Omega^N, \mathcal{F}^N, \mathbb{P}^N)\) where $(X_0, W^0, W)$ is replaced by $(X^i_0, W^0, W^i)$.
Moreover, for each fixed \(t \in [0, T]\), the collection \((\Theta_t^i(\omega^0, \cdot, \cdot))_{i \in \{1, \ldots, N\}}\) forms an i.i.d.\ sequence of random variables on \((\Omega^{1,N}, \mathcal{F}^{1,N}, \mathbb{P}^{1,N})\) for \(\mathbb{P}^0\)-almost all $\omega^0$, whose common law coincides with \(\mathcal{L}^1(\Theta)\). We also define 
\[
\alpha_t^i := \Lambda_t ( \Theta^i_t, \mathcal{L}^1(X_t^i), 0),
\]
and denote 
\[
\bs{\Theta} = (\Theta^1, \ldots, \Theta^N).
\]

We begin with the following lemma as introduced in \cite[Lemma~6]{jackson2024quantitative}.
\begin{lemma}
Let Assumption \ref{assump:displacement_monotonicity} hold. 
Then we have
\begin{align*}
&\sum_{i=1}^N \Big(
 -\big( F_t(\theta^i, L^N(\bs{\theta}) )
      - F_t(\tilde{\theta}^i, L^N(\bs{\tilde{\theta}}) ) \big)
      \cdot (x^i - \tilde{x}^i)  \\[-8pt]
&\qquad
 + \big( B_t(\theta^i, L^N(\bs{\theta}) )
      - B_t(\tilde{\theta}^i, L^N(\bs{\tilde{\theta}}) ) \big)
      \cdot (y^i - \tilde{y}^i) \\
&\qquad
 + \big( \Sigma_t(\theta^i, L^N(\bs{\theta}) )
      - \Sigma_t(\tilde{\theta}^i, L^N(\bs{\tilde{\theta}}) ) \big)
      \cdot (z^i - \tilde{z}^i) \\
&\qquad
 + \big( \Sigma^0_t(\theta^i, L^N(\bs{\theta}) )
      - \Sigma^0_t(\tilde{\theta}^i, L^N(\bs{\tilde{\theta}}) ) \big)
      \cdot (z^{0,i} - \tilde{z}^{0,i})
\Big)  \leq  -C_H \sum_{i=1}^N |x^i - \tilde{x}^i|^2 .
\end{align*}
where
\[
\theta^i = (x^i, y^i, z^i, z^{0,i}), \qquad
\bs{\theta} = (\theta^1,\ldots,\theta^N),
\]
and
\[
\tilde{\theta}^i = (\tilde{x}^i, \tilde{y}^i, \tilde{z}^i, \tilde{z}^{0,i}), \qquad
\bs{\tilde{\theta}} = (\tilde{\theta}^1,\ldots,\tilde{\theta}^N).
\]

Moreover
\begin{align*}
\sum^N_{i=1} (G(x^i, L^N(\bs{x})) - G(\tilde{x}^i, L^N(\bs{\tilde{x}})) )  \cdot (x^i - \tilde{x}^i)
\geq C_G \sum^N_{i=1} |x^i - \tilde{x}^i|^2 .
\end{align*}
\end{lemma}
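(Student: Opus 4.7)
The plan is to derive the claimed empirical inequalities as a direct specialization of Assumption~\ref{assump:displacement_monotonicity}, by feeding into it a discretely supported coupling built from the given tuples $\bs\theta$ and $\bs{\tilde\theta}$. Concretely, I would work on an auxiliary probability space carrying a uniform random index $I$ on $\{1,\ldots,N\}$ and set
\[
\Theta := (x^I,\, y^I,\, z^I,\, z^{0,I}), \qquad \Theta' := (\tilde x^I,\, \tilde y^I,\, \tilde z^I,\, \tilde z^{0,I}),
\]
so that $\Theta$ and $\Theta'$ are square-integrable and coupled through the common index $I$. Under this choice the marginals are precisely the empirical measures $\mathcal{L}(\Theta) = L^N(\bs\theta)$ and $\mathcal{L}(\Theta') = L^N(\bs{\tilde\theta})$, while the joint law is the uniform measure on the $N$ diagonal pairs $\{(\theta^i,\tilde\theta^i)\}_{i=1}^N$.

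The next step is to substitute this coupling into \eqref{eq:standard_monotonicity}. Every expectation on the left-hand side then turns into a normalized sum over $i$; for instance, the $F$-term collapses to $\frac{1}{N}\sum_{i=1}^N -(x^i - \tilde x^i)\cdot\bigl(F_t(\theta^i,L^N(\bs\theta)) - F_t(\tilde\theta^i,L^N(\bs{\tilde\theta}))\bigr)$, with the $B$, $\Sigma$, $\Sigma^0$ contributions being entirely analogous, and the right-hand side becomes $-C_H\cdot\frac{1}{N}\sum_{i=1}^N |x^i-\tilde x^i|^2$. Multiplying the resulting inequality by $N$ yields exactly the first displayed conclusion of the lemma. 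The terminal inequality for $G$ follows by the same device applied to \eqref{eq:displacement_monotonicity} with $X:=x^I$ and $X':=\tilde x^I$: both sides expand as $\frac{1}{N}$ times the desired index sums, and clearing the factor of $N$ produces the second inequality.

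There is no substantive technical obstacle; the only point that warrants a remark is that Assumption~\ref{assump:displacement_monotonicity} imposes no restriction on the joint law of $(\Theta,\Theta')$ beyond square integrability, so the diagonal discrete coupling constructed above is admissible. The crucial feature of the coupling is that it is the diagonal one: any other coupling of the same two empirical marginals would still realize the correct laws, but only the common-index coupling reduces the inner-product terms to the index-by-index sum demanded by the conclusion. Thus, the entire lemma is recovered from the functional monotonicity hypothesis by a single judicious choice of coupling.
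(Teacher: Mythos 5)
Your proposal is correct and is essentially identical to the paper's own proof: the paper also constructs the diagonal coupling placing mass $\tfrac{1}{N}$ on each pair $(\theta^i,\tilde\theta^i)$ and then applies Assumption~\ref{assump:displacement_monotonicity} directly. Your random-index formulation is just a slightly more explicit rendering of the same one-line argument.
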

\begin{proof}
Take random variables $\Theta = (X,Y,Z,Z^0)$ and $\tilde{\Theta} = (\tilde{X},\tilde{Y},\tilde{Z},\tilde{Z}^0)$ with 
\[
\mathbb{P} (\Theta = \theta^i,  \tilde{\Theta} = \tilde{\theta}^i ) = \mathbb{P} (X = x^i,  \tilde{X} = \tilde{x}^i )  =  \frac{1}{N},
\]
for all $i = 1,\ldots, N $. 
Apply Assumption~\ref{assump:displacement_monotonicity} to conclude.
\end{proof}

In the subsequent lemmas, we estimate the solution of the FBSDE systems for the $N$-player game.
\begin{lemma}
\label{lemma:moment_estimate_Y_Z_Z0_ii}
Let Assumption \ref{assump:standard_assump}, \ref{assump:Hamiltonian}, and \ref{assump:displacement_monotonicity} hold.
Then, for sufficiently large $N$, there exists a constant \(C > 0\), independent of \(i\), \(j\), and \(N\), such that
\begin{equation}
\begin{split}    \label{eq:moment_estimate_Y_Z_Z0_ii}
    &\sum^N_{i=1}\E\Bigg[ 
         |X^{N,i}_T| + 
        \int^T_0  |X^{N,i}_t|^2 + |Y^{N, i,i}_t|^2 +  \sum^N_{j=1}|Z_t^{N,i,i,j}|^2 + |Z_t^{0,N,i,i}|^2 \,dt 
    \Bigg] \\
    &\quad \leq 
     CN
    +  C  \sum^N_{i=1} \E\Bigg[
        \int^T_0|\zeta^i_t|^2 \, dt 
    \Bigg]  +\frac{C}{N^{1/2}}  \sum^N_{i=1} \E\Bigg[ 
        \int^T_0\sum^N_{j \neq i} (| Y^{N,i,j} |^2 +  | Z^{N,i,j,j } |^2 + | Z^{0,N,i,j } |^2) \, dt
    \Bigg]. 
\end{split}
\end{equation}
\end{lemma}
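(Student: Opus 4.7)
The plan is to apply Itô's formula to the sum of cross-products $\sum_i X^{N,i}_t \cdot Y^{N,i,i}_t$ over $[0,T]$, exploit the preceding pointwise version of Assumption~\ref{assump:displacement_monotonicity} (applied with the reference configuration $\tilde\theta^i \equiv 0$), and close the estimate via a standard BSDE a priori bound on the diagonal backward components. After taking expectations, the martingale parts vanish and the cross-product identity reads
\begin{align*}
\sum_i \E\bigl[X_T^{N,i}\cdot Y_T^{N,i,i}\bigr] &- \sum_i \E\bigl[X_0^{i}\cdot Y_0^{N,i,i}\bigr] \\
&= \sum_i \E\int_0^T \Bigl[Y^{N,i,i}_t \cdot B^i_t - X^{N,i}_t \cdot F^i_t - \tfrac{1}{N}\, X^{N,i}_t \cdot \partial_\mu H^{N,i}_t(\cdots)(X^{N,i}_t) \\
&\qquad\qquad\qquad + \Sigma^i_t \cdot Z^{N,i,i,i}_t + \Sigma^{0,i}_t \cdot Z^{0,N,i,i}_t\Bigr]\,dt.
\end{align*}
Writing $B^i_t(\bs{\Theta}^N_t,\bs{\zeta}_t) = B_t(\Theta^{N,i}_t, L^N(\bs{\Theta}^N_t)) + R^{B,i}_t$ and similarly for $F^i,\Sigma^i,\Sigma^{0,i}$, the separability condition (Assumption~\ref{assump:standard_assump}\ref{assump:decomposition}) together with the Lipschitz property of $\Lambda$ yields $|R^{\varphi,i}_t|\le C|\zeta^i_t| + C\bigl(\tfrac1N\sum_j|\zeta^j_t|^2\bigr)^{1/2}$, so the preceding lemma (applied with $\tilde\theta^i\equiv 0$) bounds the sum of the mean-field-coefficient inner products by $-C_H\sum_i|X^{N,i}_t|^2$ plus ``zero-coefficient'' remainders handled by Young's inequality. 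Similarly, the second half of that lemma, combined with the linear growth of $\partial_\mu g$, gives $\sum_i X_T^{N,i}\cdot Y_T^{N,i,i}\ge (C_G-C/N)\sum_i|X_T^{N,i}|^2 - CN$, which dominates $\sum_i|X_T^{N,i}|^2$ for $N$ sufficiently large.

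The key remaining term is $\tfrac{1}{N}\,X^{N,i}_t \cdot \partial_\mu H^{N,i}_t(\cdots)(X^{N,i}_t)$, which expands into four contributions from $\partial_\mu f$, $\partial_\mu b$, $\partial_\mu\sigma$, $\partial_\mu\sigma^0$. Boundedness of $\partial_\mu b,\partial_\mu\sigma,\partial_\mu\sigma^0$ (Assumption~\ref{assump:standard_assump}\ref{assump:bounded_L_derivative}) and linear growth of $\partial_\mu f$, combined with Cauchy--Schwarz in the summation index $k$ (which converts $\tfrac1N\sum_k$ into $\tfrac{1}{\sqrt N}\bigl(\sum_k(\cdot)^2\bigr)^{1/2}$) and Young's inequality, produce an absorbable $\varepsilon\sum_i|X^{N,i}_t|^2$ piece, a $CN$ constant, the stated off-diagonal remainder $\tfrac{C}{\sqrt N}\sum_i\sum_{k\neq i}(|Y^{N,i,k}|^2+|Z^{N,i,k,k}|^2+|Z^{0,N,i,k}|^2)$, and a diagonal contribution involving $|Y^{N,i,i}|^2+|Z^{N,i,i,i}|^2+|Z^{0,N,i,i}|^2$ that will be absorbed later. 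Combining all of the above with the Young bound $\bigl|\sum_i \E\bigl[X_0^i\cdot Y_0^{N,i,i}\bigr]\bigr|\le\varepsilon\sum_i\E|Y_0^{N,i,i}|^2 + CN/\varepsilon$ delivers control of $\sum_i\E|X_T^{N,i}|^2 + \sum_i\E\int_0^T|X^{N,i}_t|^2\,dt$ up to a still-uncontrolled $\sum_i\E|Y_0^{N,i,i}|^2$.

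To close this loop, I apply Itô to $|Y^{N,i,i}_t|^2$, use the linear growth of $F^i + \tfrac1N\partial_\mu H^{N,i}$ and of the terminal value $G + \tfrac1N\partial_\mu g$, and invoke the standard BSDE a priori estimate (absorbing $|Y^{N,i,i}|^2$, $\sum_j|Z^{N,i,i,j}|^2$ and $|Z^{0,N,i,i}|^2$ into the left-hand side via Young's inequality). Summed over $i$, this controls $\sum_i\E|Y_0^{N,i,i}|^2 + \sum_i\E\int_0^T\bigl(|Y^{N,i,i}|^2+\sum_j|Z^{N,i,i,j}|^2+|Z^{0,N,i,i}|^2\bigr)dt$ in terms of $\sum_i\E|X_T^{N,i}|^2$, $\sum_i\E\int_0^T|X^{N,i}|^2\,dt$, $\sum_i\E\int_0^T|\zeta^i|^2\,dt$, the off-diagonal remainder, and $O(N)$; substituting back into the X-estimate and solving the resulting simultaneous linear system for $N$ sufficiently large delivers \eqref{eq:moment_estimate_Y_Z_Z0_ii}. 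The main obstacle I anticipate is the bookkeeping in the second step: extracting the $N^{-1/2}$ scaling for the off-diagonal coupling induced by $\partial_\mu H^{N,i}$, and balancing the absorption of the diagonal $Y^{N,i,i}, Z^{N,i,i,\cdot}, Z^{0,N,i,i}$ contributions between the cross-product identity and the BSDE a priori estimate so that the final simultaneous system remains solvable.
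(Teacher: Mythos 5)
Your proposal is correct and follows essentially the same route as the paper's proof: Itô on the cross-products $\sum_i X^{N,i}\cdot Y^{N,i,i}$, monotonicity applied against the zero reference configuration, a $\zeta$-controlled remainder for the difference between $B^i_t(\bs{\Theta}^N,\bs{\zeta})$ and $B_t(\Theta^{N,i},L^N(\bs{\Theta}^N))$, the terminal monotonicity of $G$ with the $\partial_\mu g$ correction, Cauchy--Schwarz in $k$ to extract the $N^{-1/2}$ off-diagonal scaling from the $\partial_\mu H^{N,i}$ term, and closure via the Itô/Gronwall a priori estimate on $\sum_i|Y^{N,i,i}|^2$ plugged back into the forward estimate. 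The bookkeeping you flag as the main obstacle is handled in the paper exactly as you outline, so no changes are needed.
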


\begin{proof}
First, applying It\^o's product rule yields
\begin{align*}
    &d(X^{N,i} \cdot Y^{N,i,i})_t  \\
    &\quad =  X^{N,i}_t\, dY^{N,i,i}_t + Y^{N,i,i}_t\, dX^{N,i}_t +  d[X^{N,i}, Y^{N,i,i}]_t\\[4pt]
    &\quad = \bigg[-  F^i_t\left( \bs{\Theta}^N_t, \bs{\zeta}_t  \right) \cdot X^{N,i}_t 
        +  B^i_t\left( \bs{\Theta}^N_t, \bs{\zeta}_t  \right) \cdot Y^{N,i,i}_t 
        + \Sigma^i_t\left( \bs{\Theta}^N_t, \bs{\zeta}_t  \right) \cdot Z^{N,i,i,i}_t 
        + \Sigma^{0,i}_t\left( \bs{\Theta}^N_t, \bs{\zeta}_t  \right) \cdot Z^{0,N,i,i}_t \\
    &\qquad\qquad - \frac{1}{N} 
            \partial_{\mu} H^{N,i}_t \bigl( 
                \bs{X}^N_t, \bs{Y}^{N,i,:}_t, \bs{Z}^{N,i,:}_t, \bs{Z}^{0,N,i,:}_t, \bs{\alpha}^N_t
            \bigr)( X_t^{N,i} )  \cdot X^{N,i}_t  \bigg] \, dt 
            + dM^i_t,
\end{align*}
where $M^i$ is a $\mathbb{P}^N$-martingale.
Thus we get 
\footnote{
    Within the proofs, we use the shorthand notation $\sum_i = \sum^N_{i=1}$ and $\sum_{j \neq i} = \sum^N_{j \neq i}$. 
    The constant $C > 0$ varies line by line, unless it depends on $i$ or $N$.
}
\begin{align*}
&\E \Bigg[ \sum_i X^{N,i}_T \cdot Y^{N,i,i}_T \Bigg]  \\[2pt]
&\quad \leq \E \Bigg[ \sum_i X^{N,i}_0 \cdot Y^{N,i,i}_0 \Bigg] \\[2pt]
&\quad + \E \Bigg[ 
    \int^T_0 \sum_i \bigg(
        - \Big( F_t(\Theta^{N,i}_t, L^N(\bs{\Theta}^N_t)) - F_t(\boldsymbol{0}, \delta_0) \Big) \cdot X^{N,i}_t  
        + \Big( B_t(\Theta^{N,i}_t, L^N(\bs{\Theta}^N_t)) - B_t(\boldsymbol{0}, \delta_0) \Big) \cdot Y^{N,i,i}_t \\
&\hspace{3cm}      
        + \Big( \Sigma_t(\Theta^{N,i}_t, L^N(\bs{\Theta}^N_t)) - \Sigma_t(\boldsymbol{0}, \delta_0) \Big) \cdot Z^{N,i,i,i}_t 
        + \Big( \Sigma^0_t(\Theta^{N,i}_t, L^N(\bs{\Theta}^N_t)) - \Sigma^0_t(\boldsymbol{0}, \delta_0) \Big) \cdot Z^{0,N,i}_t  
    \bigg) dt 
\Bigg] \\[2pt]
&\quad + \E \Bigg[ 
    \int^T_0 \sum_i \bigg(
        - \Big( F^i_t(\bs{\Theta}^N_t, \bs{\zeta}_t) - F_t(\Theta^{N,i}_t, L^N(\bs{\Theta}^N_t)) \Big) \cdot X^{N,i}_t   
        + \Big( B^i_t(\bs{\Theta}^N_t, \bs{\zeta}_t) - B_t(\Theta^{N,i}_t, L^N(\bs{\Theta}^N_t)) \Big) \cdot Y^{N,i,i}_t \\[2pt]
&\hspace{3cm}      
        + \Big( \Sigma^i_t(\bs{\Theta}^N_t, \bs{\zeta}_t) - \Sigma_t(\Theta^{N,i}_t, L^N(\bs{\Theta}^N_t)) \Big) \cdot Z^{N,i,i,i}_t 
+ \Big( \Sigma^{0,i}_t(\bs{\Theta}^N_t, \bs{\zeta}_t) - \Sigma^0_t(\Theta^{N,i}_t, L^N(\bs{\Theta}^N_t)) \Big) \cdot Z^{0,N,i}_t  
    \bigg) dt 
\Bigg] \\[2pt]
&\quad + \E \Bigg[ 
    \int^T_0 \sum_i \bigg(
        - F_t(\boldsymbol{0}, \delta_0) \cdot X^{N,i}_t  
        + B_t(\boldsymbol{0}, \delta_0) \cdot Y^{N,i,i}_t    
        + \Sigma_t(\boldsymbol{0}, \delta_0) \cdot Z^{N,i,i,i}_t  
        + \Sigma^0_t(\boldsymbol{0}, \delta_0) \cdot Z^{0,N,i}_t  
    \bigg) dt 
\Bigg] \\[2pt]
&\quad - \E \Bigg[ 
    \int^T_0 \sum_i \frac{1}{N} 
        \partial_{\mu} H^{N,i}_t \bigl( 
            \bs{X}^N_t, \bs{Y}^{N,i,:}_t, \bs{Z}^{N,i,:}_t, \bs{Z}^{0,N,i,:}_t, \bs{\alpha}^N_t
        \bigr)( X_t^{N,i} ) \cdot X^{N,i}_t \, dt 
\Bigg].
\end{align*}
Then, by applying the monotonicity condition in Assumption~\ref{assump:displacement_monotonicity},  
the boundedness of \(\partial_{\mu} b\), \(\partial_{\mu} \sigma\), and \(\partial_{\mu} \sigma^0\),  
the Lipschitz continuity and linear growth conditions of $b,\sigma, \sigma^0,\partial_x H$,  
together with the \(\varepsilon\)-Young inequality for an arbitrary \(\varepsilon > 0\), we obtain
\begin{equation}
\begin{split}
    &\E \Bigg[ \sum_i X^{N,i}_T \cdot Y^{N,i,i}_T \Bigg]\\
    &\leq C \, \E \Bigg[ 
        \frac{1}{\varepsilon} \sum_i |X^{N,i}_0|^2 +  \varepsilon \sum_i |Y^{N,i,i}_0|^2 - \sum_i \int_0^T |X_t^{N,i}|^2 \, dt 
    \Bigg]  \\
    &\quad + \varepsilon C \, \E \Bigg[
        \int_0^T \sum_i \big( |X^{N,i}_t|^2 + |Y^{N,i,i}_t|^2 + |Z^{N,i,i,i}_t|^2 + |Z^{0,N,i,i}_t|^2 \big) \, dt 
    \Bigg]
    + \frac{C}{\varepsilon} \Bigg( N + \E \Bigg[ \sum_i  \int_0^T |\zeta^i_t|^2 \, dt \Bigg] \Bigg)  \\
    &\quad  + C \, \E \Bigg[ \int_0^T ( \frac{1}{N^{1/2}} + \frac{1}{N} ) \sum_i |X^{N,i}_t|^2 
        + \frac{1}{N^{1/2}} \sum_i \sum_{j} \left( |Y_t^{N,i,j}|^2 + |Z_t^{N,i,j,j}|^2 + |Z_t^{0,N,i,j}|^2 \right) dt 
    \Bigg].
    \label{eq:running_X^Ni_Y^Nii_cross_term_estimate}
\end{split}
\end{equation}
Similarly, we can estimate $X^{N,i}_T \cdot Y^{N,i,i}_T$ by using the monotonicity of $G$, the linear growth condition for $\partial_{\mu}g$, and $\varepsilon$-Young's inequality,
\begin{align}
    \E\left[ \sum_i X^{N,i}_T \cdot Y^{N,i,i}_T  \right] 
    &= \E\left[ 
           \sum_i X^{N,i}_T \cdot G (X^{N,i}_T, L^N(\bs{X}_T^N))
           + \frac{1}{N} \sum_i X^{N,i}_T \cdot \partial_{\mu}g(X^{N,i}_T, L^N(\bs{X}_T^N))(X^{N,i}_T)  
       \right] \notag\\
    &= \E\Bigg[ 
           \sum_i X^{N,i}_T \cdot \left(G (X^{N,i}_T, L^N(\bs{X}_T^N)) -G(\mathbf{0},\delta_{\mathbf{0}})\right) 
           + \sum_i X^{N,i}_T \cdot G (\mathbf{0},\delta_{\mathbf{0}}) \notag \\
           &\qquad + \frac{1}{N}\sum_i X^{N,i}_T \cdot \partial_{\mu}g(X^{N,i}_T, L^N(\bs{X}_T^N))(X^{N,i}_T)
       \Bigg] \notag\\
    & \geq 
       \E\bigg[ 
           C_g \sum_i|X^{N,i}_T|^2 \bigg] 
           - C \E\bigg[  
               \varepsilon \sum_i|X^{N,i}_T|^2  -  \frac{1}{N} \sum_i|X^{N,i}_T|^2  \bigg] 
           - C ( 1 + \frac{N}{\varepsilon} ) . 
    \label{eq:terminal_X^Ni_Y^Nii_cross_term_estimate}
\end{align}
Thus, combining \eqref{eq:running_X^Ni_Y^Nii_cross_term_estimate} and \eqref{eq:terminal_X^Ni_Y^Nii_cross_term_estimate}, we get
\begin{equation}
\begin{split}
    \E\Bigg[ \sum_{i}|X^{N,i}_T|^2  + \int^T_0 \sum_{i} |X^{N,i}_t|^2 dt \Bigg]  
    &\leq \varepsilon C\E\Bigg[ \sum_{i} |Y^{N,i,i}_0|^2 \Bigg] 
          + (\varepsilon+\frac{1}{N}) C \E\Bigg[ \sum_{i}|X^{N,i}_T|^2 \Bigg] \\
    &\quad + (\varepsilon + \frac{1}{N^{1/2}}  +\frac{1}{N}) C \E\Bigg[
             \int^T_0  \sum_{i} (|X^{N,i}_t|^2 + |Y^{N,i,i}_t| + | Z^{N,i,i,i}_t| ^2 + |Z^{0,N,i,i}_t| ^2 ) \, dt
             \Bigg] \\
    &\quad + \frac{C}{N^{1/2}}  \E\Bigg[
             \int^T_0  \sum_{i}\sum_{j\neq i} ( |Y^{N,i,j}_t| + | Z^{N,i,j,j}_t| ^2 + |Z^{0,N,i,j}_t| ^2 ) \, dt
             \Bigg]  \\
    &\quad + \frac{C}{\varepsilon} \Bigg(
             N + \E \Bigg[ \int^T_0  \sum_{i} |\zeta^i_t|^2 \, dt\Bigg]
             \Bigg),
    \label{eq:estimate_for_sum_X^Ni}
\end{split}
\end{equation}
where we used the fact that $(X_0^{N,i})_{i=1,\ldots,N}$ is an  i.i.d.\ sample from $\mu_0\in\mathcal{P}_2(\R^n)$.
Next, applying Itô's formula to \(\sum_i|Y^{N,i,i}|^2\) and taking expectation, we obtain
\begin{align}
    &\E \Bigg[ \sum_i |Y^{N, i,i}_t|^2 + \int^T_t \sum_i\sum_{k}|Z_s^{N,i,i,k}|^2 + \sum_i|Z_s^{0,N,i,i}|^2 \,ds \Bigg] \notag \\[2pt]
    &\leq \E \Bigg[ \sum_i|Y_T^{N,i}|^2 \Bigg] 
          + C \E \Bigg[ \int^T_t \sum_i|Y^{N,i,i}_s| \cdot |F^i_s(\bs{\Theta}^N_s, \bs{\zeta}_s)| \notag \\
    &\qquad + \frac{1}{N} \sum_i |Y_s^{N,i,i} | \cdot |\partial_{\mu} H^{N,i}_s \bigl( \bs{X}^N_s, \bs{Y}^{N,i,:}_s, \bs{Z}^{N,i,:}_s, \bs{Z}^{0,N,i,:}_s, \bs{\alpha}^N_s \bigr)( X_s^{N,i} )| \, ds \Bigg] \notag \\[4pt]
    &\leq\mathbb{E} \Bigg[ \sum_i|Y_T^{N,i}|^2 \Bigg]
          + C(1+ \eta + \frac{1}{\eta} ) \mathbb{E} \Bigg[ \int^T_t \sum_i |Y^{N,i,i}_s|^2 \,ds \Bigg] 
          \label{eq:estimate_for_sum_Y^Ni_Z^Ni_Z^0Ni}\\
        &\quad + (\eta + \frac{1}{N})C\mathbb{E} \Bigg[ \int^T_t N + \sum_i |X^{N,i}_s|^2 + \sum_i|\zeta^i_s|^2\,ds \Bigg] 
        \notag \\
        &\quad + \frac{C}{N}\mathbb{E} \Bigg[ \int^T_t \sum_i\sum_{j \neq i} (| Y^{N,i,j}_s |^2 + | Z^{N,i,j,j}_s |^2 + | Z^{0,N,i,j }_s |^2) \,ds \Bigg] 
        \notag\\
        &\quad + (\eta + \frac{1}{N})C \mathbb{E} \Bigg[ \int^T_t \sum_i (|Z^{N,i,i,i }_s|^2 + |Z^{0:N,i,i}_s|^2 ) \,ds \Bigg],
        \notag
\end{align}
where we used the boundedness of $\partial_{\mu}b, \partial_{\mu}\sigma$, and $\partial_{\mu}\sigma^0$, the linear growth of $\partial_x H,  \partial_{\mu}f$ and $\Lambda$, and $\eta$-Young's inequality for an arbitrary $\eta>0$. Thus, taking $\eta$ sufficiently small and $N$ large, we get 
\begin{align*}
    \E\Bigg[ \sum_i |Y^{N, i,i}_t|^2\Bigg] 
    &\leq \E\Bigg[ \sum_i|Y_T^{N,i}|^2 \Bigg]
          + C(1+ \eta + \frac{1}{\eta} ) \E\Bigg[ \int^T_t \sum_i |Y^{N,i,i}_s|^2 \,ds \Bigg] \\
    &\quad + (\eta + \frac{1}{N})C\E\Bigg[ \int^T_0 N + \sum_i |X^{N,j}_s|^2 + \sum_i|\zeta^i_s|^2\,ds \Bigg] \\[2pt]
    &\quad + \frac{C}{N}\E\Bigg[ 
             \int^T_0 \sum_i\sum_{j \neq i} (| Y^{N,i,j}_s |^2 + | Z^{N,i,j,j}_s |^2 + | Z^{0,N,i,j }_s |^2) \,ds 
             \Bigg], \quad \text{for any $t \in [0,T]$}.
\end{align*}
Apply Gronwall's inequality to get
\begin{align*}
    \mathbb{E}\Bigg[ \sum_i |Y^{N, i,i}_t|^2\Bigg]
    &\leq CN + C \E\Bigg[ \sum_i|X_T^{N,i}|^2 \Bigg]\\
    &\quad + (\eta + \frac{1}{N})C\mathbb{E}\Bigg[ \int^T_0 N + \sum_i |X^{N,j}_s|^2 + \sum_i|\zeta^i_s|^2\,ds \Bigg] \\[2pt]
    &\quad + \frac{C}{N}\mathbb{E}\Bigg[ 
             \int^T_0 \sum_i\sum_{j \neq i} (| Y^{N,i,j}_s |^2 + | Z^{N,i,j,j}_s |^2 + | Z^{0,N,i,j }_s |^2) \,ds 
             \Bigg].
\end{align*}
Plug this into \eqref{eq:estimate_for_sum_Y^Ni_Z^Ni_Z^0Ni}, and then \eqref{eq:estimate_for_sum_X^Ni}, and take $\varepsilon$ sufficiently small and $N$ large, to obtain 
\begin{equation}
\begin{split}
    &\mathbb{E}\Bigg[ \int^T_0 \sum_{i} |X^{N,i}_t|^2 \,dt + \sum_i|X_T^{N,i}|^2 \Bigg] \\
    &\leq NC + C \mathbb{E}\Bigg[ \int^T_0 \sum_{i} |\zeta^i_t|^2 \, dt \Bigg]  + C\mathbb{E}\Bigg[
             \int^T_0 \frac{1}{N^{1/2}} \sum_{i} \sum_{j\neq i} \left( | Y^{N,i,j} |^2 + | Z^{N,i,j,j } |^2 + | Z^{0,N,i,j } |^2 \right) \, dt
             \Bigg],
\end{split}
\end{equation}
where $C$ does not depend on $i, j$, and $N$.
From this estimate, the result follows immediately.
\end{proof}

\begin{lemma}
\label{lemma:moment_estimate_Y_Z_Z0_iJ}
Let Assumption \ref{assump:standard_assump}, \ref{assump:Hamiltonian}, and \ref{assump:displacement_monotonicity} hold.
Then, for sufficiently large $N$, we have
\begin{equation}
\begin{split}
    &\mathbb{E}\left[
        \int^T_0 \sum^N_{i=1} \sum^N_{i\neq j} \left(| Y_t^{N,i,j} |^2 +  \sum^N_{k=1} |Z_t^{N,i,j,k} |^2 + | Z_t^{0,N,i,j } |^2\right) \, dt
    \right] 
    \leq C +  \frac{C}{N} \mathbb{E}\left[\int^T_0 
            \sum^N_{i=1} |\zeta^i_t|^2 \, dt 
        \right],
\end{split}
\end{equation}
for some constant $C>0$, which is independent of $i,j,$ and $N$.
\end{lemma}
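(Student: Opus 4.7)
The plan is to apply It\^o's formula to $A_t := \sum_{i=1}^{N}\sum_{j \neq i}|Y_t^{N,i,j}|^2$ and to estimate the driver of the off-diagonal adjoint BSDE obtained from Lemma~\ref{lemma:smp_N_player} piece by piece, closing the resulting inequality via backward Gronwall together with an absorption argument for $N$ large. Setting $B_t := \sum_i \sum_{j \neq i}\bigl( \sum_{k=1}^N |Z_t^{N,i,j,k}|^2 + |Z_t^{0,N,i,j}|^2\bigr)$, It\^o's formula and the BSDE yield
\[
\E[A_t] + \E\int_t^T B_s \, ds = \E[A_T] + 2\, \E\int_t^T \sum_{i=1}^N \sum_{j \neq i} Y_s^{N,i,j} \cdot \partial_{x_j} H_s^{N,i} \, ds.
\]
The key structural observation is that, for $i \neq j$, the Kronecker piece $\delta_{i,j}\partial_x f$ in $\partial_{x_j}H^{N,i}$ vanishes, so every summand in the driver either is Lipschitz in $Y,Z,Z^0$ with bounded coefficients (namely the $\partial_x b$, $\partial_x \sigma$, $\partial_x \sigma^0$ terms), or carries an explicit factor $\frac{1}{N}$ inherited from differentiation of an empirical measure. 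This is what makes an $O(1)$ bound plausible.

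For the terminal condition, the linear growth of $\partial_\mu g$ and the $\frac{1}{N}$ prefactor give $|Y_T^{N,i,j}|^2 \leq \tfrac{C}{N^2}\bigl(1 + |X_T^{N,i}|^2 + |X_T^{N,j}|^2 + M_2^2(L^N(\bs{X}_T^N))\bigr)$; summation over $(i,j)$ with $j \neq i$ produces $\E[A_T] \leq C + \tfrac{C}{N}\sum_i \E[|X_T^{N,i}|^2]$, which by Lemma~\ref{lemma:moment_estimate_Y_Z_Z0_ii} is controlled by $C + \tfrac{C}{N}\sum_i \E\int_0^T |\zeta_t^i|^2\,dt$ modulo an absorbable $\tfrac{C}{N^{3/2}}\E\int_0^T (A_s + B_s)\,ds$ term. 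Among the driver terms, the Lipschitz-type contributions $\partial_x b \cdot Y^{N,i,j}$, $\partial_x \sigma \cdot Z^{N,i,j,j}$, and $\partial_x \sigma^0 \cdot Z^{0,N,i,j}$ yield $C A + \eta B$ after $\varepsilon$-Young, while the prefactor $\tfrac{1}{N}\partial_\mu f$ contributes, via its linear growth and the $\tfrac{1}{N^2}$ factor gained upon summation in $(i,j)$, at most $C + \tfrac{C}{N}\sum_k \E[|X^{N,k}|^2 + |\alpha^{N,k}|^2]$, handled by Lemma~\ref{lemma:moment_estimate_Y_Z_Z0_ii} combined with the linear growth of $\Lambda$ from Lemma~\ref{lemma:optimizer_of_Hamiltonian}.

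The main obstacle is the treatment of the remaining driver terms of the form $\tfrac{1}{N}\sum_k \partial_\mu \varphi_t(X^{N,k},\alpha^{N,k},L^N)(X^{N,j}) \cdot U^{N,i,k}$ for $\varphi \in \{b,\sigma,\sigma^0\}$, where $U^{N,i,k}$ stands for one of $Y^{N,i,k}$, $Z^{N,i,k,k}$, $Z^{0,N,i,k}$. A naive Cauchy--Schwarz would produce $\tfrac{C}{N}\sum_k |U^{N,i,k}|^2$, yielding after summation in $(i,j)$ a contribution comparable to $\sum_i |Y^{N,i,i}|^2$; by Lemma~\ref{lemma:moment_estimate_Y_Z_Z0_ii} this is of order $N$, i.e.\ too large. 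The resolution is to split the inner summation $k=i$ versus $k \neq i$ \emph{before} applying Cauchy--Schwarz, which crucially exploits the boundedness of $\partial_\mu \varphi$ from Assumption~\ref{assump:standard_assump}~\ref{assump:bounded_L_derivative} and yields the sharper estimate
\[
\Bigl| \tfrac{1}{N} \sum_{k=1}^N \partial_\mu \varphi \cdot U^{N,i,k}\Bigr|^2 \leq \tfrac{C}{N^2} |U^{N,i,i}|^2 + \tfrac{C}{N} \sum_{k \neq i} |U^{N,i,k}|^2.
\]
Summing over $j \neq i$ contributes a factor $N-1$, and summing over $i$ then gives a total bound of the form $\tfrac{C}{N}\sum_i |U^{N,i,i}|^2 + C\cdot(A \text{ or } B)$; the extra $\tfrac{1}{N}$ in front of the diagonal sum is precisely what renders it $O(1)$ via Lemma~\ref{lemma:moment_estimate_Y_Z_Z0_ii}.

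Assembling these estimates with $\varepsilon$-Young, choosing $\eta$ sufficiently small, and taking $N$ large enough to absorb terms proportional to $\eta B$ and $N^{-3/2}\E\int_0^T (A+B)$ into the left-hand side, one arrives at an inequality of the form
\[
\E[A_t] + c\, \E\int_t^T B_s \, ds \leq c_1 + c_2\, \E\int_t^T A_s \, ds, \qquad c_1 \leq C + \tfrac{C}{N} \sum_{i=1}^N \E\int_0^T |\zeta_t^i|^2\,dt.
\]
Backward Gronwall in $t$ then gives $\E[A_t] \leq c_1 e^{c_2(T-t)}$ uniformly in $t \in [0,T]$, and re-injecting this bound into the above inequality controls $\E\int_0^T B_s \, ds$ by the same right-hand side, establishing the claim.
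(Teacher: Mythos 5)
Your proposal is correct and follows essentially the same route as the paper's proof: Itô's formula applied to the squared norm of the off-diagonal adjoint components, the observation that the $\delta_{i,j}\partial_x f$ term drops out, the splitting of the averaged $\partial_\mu\varphi$ driver terms into the diagonal ($k=i$) and off-diagonal contributions before Cauchy--Schwarz, control of the terminal condition and of the diagonal quantities via Lemma~\ref{lemma:moment_estimate_Y_Z_Z0_ii}, and closure by Young's inequality, backward Gronwall, and absorption for large $N$. The only difference is notational (you sum over all pairs $(i,j)$ from the outset, whereas the paper fixes $i=1$, vectorizes over $j$ with the matrices $P,Q^\varphi,R^\varphi,S^\varphi$, and sums over $i$ at the end), which does not affect the substance of the argument.
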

\begin{proof}
Fix $i=1$, for example, and define 
\[
    Y_t^{-1} := (Y_t^{N,1,2}, \ldots, Y_t^{N,1,N}), \quad 
    Z_t^{-1} := (Z_t^{N,1,2,2}, \ldots, Z_t^{N,1,N,N}), \quad 
    Z_t^{0,-1} := (Z_t^{0,N,1,2}, \ldots, Z_t^{0,N,1,N}), 
\]
\[
    P_t := \left( \partial_\mu f_t \left(  X_t^{N,1}, \alpha_t^{N,1}, L^N(\bs{X}_t^N, \bs{\alpha}_t^N) \right)(X_t^{N,j}) \right)_{j=2,\ldots,N}, 
\]
and for $\varphi = b, \sigma, \sigma^0$,
\begin{align*}
    Q^{\varphi}_t &:= \left( \partial_\mu \varphi_t\left(  X_t^{N,1}, \alpha_t^{N,1}, L^N(\bs{X}_t^N, \bs{\alpha}_t^N) \right)(X_t^{N,j}) \right)_{j=2,\ldots,N}, \\
    R^{\varphi}_t &:= \left( \partial_\mu \varphi_t\left( X_t^{N,m}, \alpha_t^{N,m}, L^N(\bs{X}_t^N, \bs{\alpha}_t^N) \right)(X_t^{N,j}) \right)_{m,j=2,\ldots,N}, \\
    S^{\varphi}_t &:= \operatorname{diag} \left( \partial_x \varphi_t\left(X_t^{N,j}, \alpha_t^{N,j}, L^N(\bs{X}_t^N, \bs{\alpha}_t^N) \right) \right)_{j=2,\ldots,N}.
\end{align*}
Recall for $i\neq j$, the process $Y^{i,j}_t$ satisfies
\begin{equation*}
    \begin{split}
        -dY_t^{i,j} 
        &= \Bigg[
            \frac{1}{N} \partial_\mu f_t\left( X_t^{N,i}, \alpha_t^{N,i}, L^N(\bs{X}_t^N, \bs{\alpha}_t^N) \right)(X_t^{N,j}) \\
            &\qquad + \partial_x b_t\left( X_t^{N,j}, \alpha_t^{N,j}, L^N(\bs{X}_t^N, \bs{\alpha}_t^N) \right) \cdot Y_t^{N,i,j}\\
            &\qquad + \partial_x \sigma_t\left( X_t^{N,j}, \alpha_t^{N,j}, L^N(\bs{X}_t^N, \bs{\alpha}_t^N) \right) \cdot Z_t^{N,i,j,j} \\
            &\qquad + \partial_x \sigma^0_t\left( X_t^{N,j}, \alpha_t^{N,j}, L^N(\bs{X}_t^N, \bs{\alpha}_t^N) \right) \cdot Z_t^{0,N,i,j}
            \Bigg] \, dt \\
            &\quad + \frac{1}{N} \sum_{k} \Bigg[ 
                \partial_\mu b_t\left( X_t^{N,k}, \alpha_t^{N,k}, L^N(\bs{X}_t^N, \bs{\alpha}_t^N) \right)(X_t^{N,j}) \cdot Y_t^{N,i,k} \\
                &\qquad + \partial_\mu \sigma_t\left( X_t^{N,k}, \alpha_t^{N,k}, L^N(\bs{X}_t^N, \bs{\alpha}_t^N) \right)(X_t^{N,j}) \cdot Z_t^{N,i,k,k} \\
                &\qquad + \partial_\mu \sigma^0_t\left( X_t^{N,k}, \alpha_t^{N,k}, L^N(\bs{X}_t^N, \bs{\alpha}_t^N) \right)(X_t^{N,j}) \cdot Z_t^{0,N,i,k} 
            \Bigg]\, dt\\
            &\quad  - \sum_{k} Z_t^{N,i,j,k} dW_t^k  - Z_t^{0, N, i,j}\, dW_t^0,
    \end{split}
\end{equation*}
with
\[
Y_T^{i,j}  = \frac{1}{N}\partial_{\mu} g(X^{N,i}_T, L^N(\bs{X}^N_T))(X^{N,j}_T),
\]
and thus we can write the dynamics of $Y_t^{-1}$ as the following:
\begin{align*}
dY_t^{-1} 
&= - \Bigg(
    \frac{1}{N} P_t + \frac{1}{N}Q^{b}Y^{N, 1,1}_t + \frac{1}{N}R_t^{b}Y^{-1}_t  + S^{b}_t Y^{-1}_t \\
    &\quad  \quad + \frac{1}{N}Q_t^{\sigma}Z^{N,1,1,1}_t + \frac{1}{N}R_t^{\sigma}Z^{-1}_t + S^{\sigma}_t  Z^{-1}_t\\ 
    &\quad  \quad  + \frac{1}{N}Q_t^{\sigma^0}Z^{{0,N, 1,1}}_t + \frac{1}{N}R_t^{\sigma^0}Z^{0,-1}_t + S^{\sigma^0}_t Z_t^{0,-1}
    \Bigg) \, dt
    + \sum_{k} Z_t^{-1,k} dW_t^k   + Z_t^{0,-1}\, dW_t^0.
\end{align*}
Using It\^o's formula, we obtain
\begin{align*}
    &\mathbb{E} \Bigg[ |Y_t^{-1}|^2 
        + \int_t^T \sum_{k=1}^N |Z_s^{-1,k}|^2 + |Z_s^{0,-1}|^2 \, ds \,\Big|\, \mathcal{F}_t^N \Bigg] \\
    &= \mathbb{E} \Bigg[ |Y_T^{-1}|^2 + \int_t^T 2 Y_s^{-1} \cdot \Bigg(
        \frac{1}{N} P_s + \frac{1}{N} Q^{b} Y_s^{N,1,1} + \frac{1}{N} R_s^{b} Y_s^{-1} + S_s^{b} Y_s^{-1}\\
    &\hspace{12em} + \frac{1}{N} Q_s^{\sigma} Z_s^{N,1,1,1} + \frac{1}{N} R_s^{\sigma} Z_s^{-1} + S_s^{\sigma} Z_s^{-1} \\
    &\hspace{12em}  + \frac{1}{N} Q_s^{\sigma^0} Z_s^{0,N,1,1} + \frac{1}{N} R_s^{\sigma^0} Z_s^{0,-1} + S_s^{\sigma^0} Z_s^{0,-1}
        \Bigg) \, ds \,\Big|\, \mathcal{F}_t^N \Bigg] \\
    &\leq \mathbb{E} \Bigg[ |Y_T^{-1}|^2 + C \left(1 + \frac{1}{\varepsilon}\right)
        \int_t^T |Y_s^{-1}|^2 
        +\frac{C}{N^2} |P_s|^2 \, ds \,\Big|\, \mathcal{F}_t^N \Bigg] \\
    &\quad + C \mathbb{E} \Bigg[ \int_t^T \frac{1}{N^2} |Q^{b} Y_s^{N,1,1}|^2 
        + \frac{1}{N^2} |R_s^{b} Y_s^{-1}|^2 + |S_s^{b}   Y_s^{-1}|^2 \\
    &\hspace{5em}  + \frac{1}{N^2} |Q_s^{\sigma} Z_s^{N,1,1,1}|^2 
        + \frac{1}{N^2} |R_s^{\sigma} Z_s^{-1}|^2 + \varepsilon |S_s^{\sigma} Z_s^{-1}|^2 \\
    &\hspace{5em} + \frac{1}{N^2} |Q_s^{\sigma^0} Z_s^{0,N,1,1}|^2 
        + \frac{1}{N^2} |R_s^{\sigma^0} Z_s^{0,-1}|^2 + \varepsilon |S_s^{\sigma^0} Z_s^{0,-1}|^2 \, ds \,\Big|\, \mathcal{F}_t^N
    \Bigg] \\
    &\leq \mathbb{E} \Bigg[ |Y_T^{-1}|^2 + C \left(1 + \frac{1}{\varepsilon}\right)
        \int_t^T |Y_s^{-1}|^2 
        +\frac{C}{N^2} |P_s|^2 \, ds \,\Big|\, \mathcal{F}_t^N 
    \Bigg] \\
    &\quad + C \mathbb{E} \Bigg[ \int_t^T \frac{1}{N} |Y_s^{N,1,1}|^2 + \frac{1}{N} |Y_s^{-1}|^2 + |Y_s^{-1}|^2 \\
    &\hspace{5em} + \frac{1}{N} |Z_s^{N,1,1,1}|^2 + \frac{1}{N} |Z_s^{-1}|^2 + \varepsilon |Z_s^{-1}|^2 \\
    &\hspace{5em}  + \frac{1}{N} |Z_s^{0,N,1,1}|^2 + \frac{1}{N} |Z_s^{0,-1}|^2 
        + \varepsilon |Z_s^{0,-1}|^2 \, ds \,\Big|\, \mathcal{F}_t^N \Bigg],
\end{align*}
where we used the boundedness of $\partial_{x}b,\partial_{x}\sigma, \partial_{x}\sigma^0, \partial_{\mu}b,\partial_{\mu}\sigma,$ and $\partial_{\mu}\sigma^0$, as well as the $\varepsilon$-Young's inequality for an arbitrary $\varepsilon>0$.
Then take $\varepsilon$ sufficiently small and $N$ large, and take the expectation of the both sides to obtain
\begin{equation}
    \begin{split}
        &\mathbb{E} \left[|Y_t^{-1}|^2 
        + \int^T_t \sum_{k} |Z_s^{-1,k}|^2 
        + |Z_s^{0,-1}|^2 \, ds \right]  \\
        &\leq  \mathbb{E} \left[|Y_T^{-1}|^2 \right] + C (1 + \frac{1}{\varepsilon} + \frac{1}{N}) \mathbb{E} \Big[
         \int^T_t |Y_s^{-1}|^2 \, ds \Big] \\
        &\quad +  C\mathbb{E} \left[
        \int^T_t \frac{1}{N^2}|P_s|^2 + 
        \frac{1}{N} (|Y^{N, 1,1}_s|^2 +  |Z^{N,1,1,1}_s|^2 + |Z^{{0,N, 1,1}}_s|^2 ) \, ds
        \right].
        \label{eq:Y-1_Z-1_Z0-1_estimate}
    \end{split}
\end{equation}
Thus, by Gronwall's inequality, we obtain for any $t\in[0,T]$,
\begin{align*}
    \mathbb{E} \left[|Y_t^{-1}|^2  \right] \notag 
    &\leq
        \mathbb{E} \left[|Y_T^{-1}|^2  + \int^T_0
        \frac{1}{N^2}|P_s|^2 + \frac{1}{N} \left(|Y^{N, 1,1}_s|^2 + |Z^{N,1,1,1}_s|^2 + |Z^{{0,N, 1,1}}_s|^2\right) \, ds
        \right]
        \notag\\
    &\leq 
        \mathbb{E} \left[|Y_T^{-1}|^2 \right]  
        + 
        \frac{C}{N} \mathbb{E} \left[\int^T_0
        1 + |X^{N,1}_s|^2 + |Y^{N,1,1}_s|^2 +  |Z^{N,1,1,1}_s|^2 +  |Z^{0,N,1,1}_s|^2 +  |\zeta^{1}_s|^2
        \,ds
        \right] \\
    &\quad + 
        \frac{C}{N^2} \mathbb{E} \left[ \int^T_0 \sum_k ( 
        1 + |X^{N,k}_s|^2 + |Y^{N,k,k}_s|^2 +  |Z^{N,k,k,k}_s|^2 +  |Z^{0,N,k,k}_s|^2 +  |\zeta^{k}_s|^2
        \,ds
        \right], 
\end{align*}
where we used the definition of $P$, as well as the linear growth of $\partial_{\mu}f$.
\begin{align*}
    \mathbb{E}\Bigg[
        |Y_T^{-1}|^2
    \Bigg]
    &\leq
    C\mathbb{E}\Bigg[
        \frac{1}{N^2}\sum^N_{j=2}\left(1 + |X^1_T|^2 + |X^j_T|^2 + \frac{1}{N}\sum^N_{k=1}|X^k_T|^2\right)
    \Bigg] \\
    &\leq
    C\mathbb{E}\Bigg[
        \frac{1}{N}\left(1 + |X^1_T|^2  + \frac{1}{N}\sum^N_{i=1}|X^i_T|^2\right)
    \Bigg].
\end{align*}
Plugging these into \eqref{eq:Y-1_Z-1_Z0-1_estimate}, we finally obtain
\begin{align*}
    &\mathbb{E} \Bigg[ |Y_t^{-1}|^2 
    + \int^T_0 \sum_{k} |Z_t^{-1,k}|^2 
    + |Z_t^{0,-1}|^2 \, dt \Bigg]  \\
    &\quad \leq
    C\mathbb{E} \Bigg[
    \frac{1}{N}\left(1 + |X^{N,1}_T|^2  + \frac{1}{N}\sum_{k}|X^{N,k}_T|^2\right)
    \Bigg] \\
    &\qquad 
    + \frac{C}{N}\mathbb{E} \Bigg[ \int^T_0
     \left( 1 + |X^{N,1}_t|^2 + |Y^{N,1,1}_t|^2 +  |Z^{N,1,1,1}_t|^2 +  |Z^{0,N,1,1}_t|^2 + |\zeta_t^{1}|^2  \right) \, dt
    \Bigg]
    \\
    &\qquad + 
    \frac{C}{N^2} \mathbb{E} \left[ \int^T_0 \sum_k ( 
        1 + |X^{N,k}_s|^2 + |Y^{N,k,k}_s|^2 +  |Z^{N,k,k,k}_s|^2 +  |Z^{0,N,k,k}_s|^2 +  |\zeta^{k}_s|^2
    )
    \,ds
    \right] .
\end{align*}
Analogously, we can derive the same estimates for all $i=1,\ldots, N$. Summing these estimates over $i$, and applying Lemma~\ref{lemma:moment_estimate_Y_Z_Z0_ii}, we obtain
\begin{align*}
    &\mathbb{E}  \Bigg[ \sum_{i}\sum_{j \neq i} \Big( |Y_t^{N,i,j}|^2 
        + \int^T_0\sum_{k} |Z_t^{i,j,k}|^2 
        + |Z_t^{0,N,i,j}|^2 \, dt \Big) 
    \Bigg]  \\
    &\quad \leq
        \frac{C}{N}\mathbb{E}\Bigg[N + \sum_{i}|X^{N,i}_T|^2
    \Bigg]
    + \frac{C}{N}\mathbb{E}\Bigg[ \int^T_0
      N + \sum_{i}(|X^{N,i}_t|^2 + |Y^{N,i,i}_t|^2 +  |Z^{N,i,i,i}_t|^2 +  |Z^{0,N,i,i}_t|^2 + |\zeta^i_t|^2 )  \, dt
    \Bigg] \\
    &\quad \leq 
    C + \frac{C}{N} \mathbb{E}\Bigg[
        \int^T_0 \sum_{i} |\zeta^i_t|^2 \, dt 
    \Bigg]
    +\frac{C}{N^{3/2}}\mathbb{E}\Bigg[ 
        \int^T_0  \sum_{i} \sum_{j\neq i} (| Y_t^{N,i,j} |^2 +  | Z_t^{N,i,j,j } |^2 + | Z_t^{0,N,i,j } |^2) \, dt 
    \Bigg].
\end{align*}
Again, choosing $N$ sufficiently large and integrating from $0$ to $T$, we conclude
\begin{align*}
    \mathbb{E}\Bigg[ 
        \int^T_0 \sum_{i} \sum_{j \neq i} \left(| Y_t^{N,i,j} |^2 +  \sum_{k} |Z_t^{N,i,j,k} |^2 + | Z_t^{0,N,i,j } |^2\right) \, dt
    \Bigg]  
    \leq C +  \frac{C}{N} \mathbb{E}\Bigg[
        \int^T_0 \sum_{i} |\zeta^i_t|^2 \, dt 
    \Bigg].
\end{align*}
\end{proof}

We are now ready to derive the convergence rate for $\zeta^i$, which is needed for the proof of Theorem~\ref{thm:main}.
\begin{proposition} 
\label{prop:zeta_convergence}
Let Assumption \ref{assump:standard_assump}, \ref{assump:Hamiltonian}, and \ref{assump:displacement_monotonicity} hold. Then, for sufficiently large $N$,
it holds that 
\begin{equation}
\mathbb{E}\left[ \int^T_0 
\sum^N_{i=1 }|\zeta^i_t|^2 
\, dt
\right] \leq C,
\label{eq:zeta_convergence}
\end{equation}
for some constant $C>0$ independent of $N$.
\end{proposition}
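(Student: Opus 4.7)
The plan is to bound $\sum_i |\zeta^i_t|^2$ pointwise from the defining formula \eqref{eq:zeta_def}, integrate in time, and then apply Lemmas \ref{lemma:moment_estimate_Y_Z_Z0_ii} and \ref{lemma:moment_estimate_Y_Z_Z0_iJ} to produce a closed inequality in $\mathbb{E}\int_0^T \sum_i |\zeta^i|^2\,dt$ that can be solved by taking $N$ large. Concretely, Assumption \ref{assump:bounded_L_derivative} together with the separability condition \ref{assump:decomposition} gives the boundedness of $\partial_\nu b^{(2)}$, $\partial_\nu \sigma^{(2)}$, $\partial_\nu \sigma^{0,(2)}$, while Assumption \ref{assump:standard_assump}\ref{assump:linear_growth} yields the linear growth of $\partial_\nu f^{(2)}$. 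Applying Cauchy--Schwarz to each sum over $j$ in \eqref{eq:zeta_def} would give
\[
|\zeta^i_t|^2 \leq \frac{C}{N^2}\Bigl(1 + |X^{N,i}_t|^2 + |\alpha^{N,i}_t|^2 + M_2^2(L^N(\bs{X}^N_t, \bs{\alpha}^N_t))\Bigr) + \frac{C}{N}\sum_{j=1}^N\Bigl(|Y^{N,i,j}_t|^2 + |Z^{N,i,j,j}_t|^2 + |Z^{0,N,i,j}_t|^2\Bigr).
\]

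Next, I would sum over $i$, using the identity $\sum_i M_2^2(L^N(\bs{X}^N_t,\bs{\alpha}^N_t)) = \sum_k(|X^{N,k}_t|^2 + |\alpha^{N,k}_t|^2)$, and handle the control moments via the identity $\alpha^{N,i}_t = \Lambda_t(\Theta^{N,i}_t, L^N(\bs{X}^N_t), \zeta^i_t)$ and the linear growth of $\Lambda$ from Lemma \ref{lemma:optimizer_of_Hamiltonian}. This gives $\sum_i|\alpha^{N,i}_t|^2 \leq C(N + \sum_i|\Theta^{N,i}_t|^2 + \sum_i|\zeta^i_t|^2)$, so that upon integration,
\[
\mathbb{E}\int_0^T \sum_i|\zeta^i_t|^2\,dt \leq \frac{C}{N} + \frac{C}{N^2} \mathbb{E}\int_0^T\sum_i|\Theta^{N,i}_t|^2\,dt + \frac{C}{N}\,\mathbb{E}\int_0^T \sum_i\sum_{j\neq i}\bigl(|Y^{N,i,j}_t|^2 + |Z^{N,i,j,j}_t|^2 + |Z^{0,N,i,j}_t|^2\bigr)dt,
\]
after absorbing an $O(1/N^2)$ copy of $\mathbb{E}\int_0^T\sum_i|\zeta^i|^2\,dt$ into the left-hand side (valid for $N$ large) and noting that the diagonal backward components are already included in $\sum_i|\Theta^{N,i}|^2$.

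Finally, I would substitute the bound from Lemma \ref{lemma:moment_estimate_Y_Z_Z0_iJ}, which gives $\mathbb{E}\int_0^T\sum_i\sum_{j\neq i}(\cdots)\,dt \leq C + \frac{C}{N}\mathbb{E}\int_0^T\sum_i|\zeta^i|^2\,dt$, and then plug this into Lemma \ref{lemma:moment_estimate_Y_Z_Z0_ii} to obtain $\mathbb{E}\int_0^T\sum_i|\Theta^{N,i}_t|^2\,dt \leq CN + C\sum_i\mathbb{E}\int_0^T|\zeta^i|^2\,dt$. Combining these two inputs produces
\[
\mathbb{E}\int_0^T\sum_i|\zeta^i_t|^2\,dt \leq C + \frac{C}{N}\,\mathbb{E}\int_0^T\sum_i|\zeta^i_t|^2\,dt,
\]
so that taking $N$ sufficiently large allows one last absorption to yield the claim.

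The only real obstacle I anticipate is careful bookkeeping of the $1/N$ factors: at every step (the Cauchy--Schwarz bound on the sums over $j$, the use of $\Lambda$'s linear growth to eliminate $|\alpha^{N,i}|^2$, and the substitution of Lemmas \ref{lemma:moment_estimate_Y_Z_Z0_ii}--\ref{lemma:moment_estimate_Y_Z_Z0_iJ}) the coefficient multiplying $\sum_i\mathbb{E}\int_0^T|\zeta^i|^2\,dt$ on the right must remain $O(1/N)$, so that absorption is available for large $N$. No essentially new estimate is required beyond what is provided by the preceding two lemmas and the regularity afforded by Assumptions \ref{assump:standard_assump} and \ref{assump:Hamiltonian}.
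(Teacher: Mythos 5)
Your proposal is correct and follows essentially the same route as the paper: bound $|\zeta^i_t|^2$ pointwise from \eqref{eq:zeta_def} via Cauchy--Schwarz, the boundedness of $\partial_{\nu}b^{(2)},\partial_{\nu}\sigma^{(2)},\partial_{\nu}\sigma^{0,(2)}$ and the linear growth of $\partial_{\nu}f^{(2)}$ and $\Lambda$, then close the inequality using Lemmas~\ref{lemma:moment_estimate_Y_Z_Z0_ii} and~\ref{lemma:moment_estimate_Y_Z_Z0_iJ} and absorb the $O(1/N)$ copy of $\E\int_0^T\sum_i|\zeta^i_t|^2\,dt$. The only quibble is bookkeeping: the diagonal backward components $|Y^{N,i,i}|^2+|Z^{N,i,i,i}|^2+|Z^{0,N,i,i}|^2$ enter with prefactor $C/N$ (not $C/N^2$ as your folding into $\frac{C}{N^2}\sum_i|\Theta^{N,i}_t|^2$ suggests), but since Lemmas~\ref{lemma:moment_estimate_Y_Z_Z0_ii}--\ref{lemma:moment_estimate_Y_Z_Z0_iJ} still give $\frac{C}{N}\E\int_0^T\sum_i|\Theta^{N,i}_t|^2\,dt\le C+\frac{C}{N}\E\int_0^T\sum_i|\zeta^i_t|^2\,dt$, the absorption goes through unchanged.
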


\begin{proof}
Recall the definition \eqref{eq:zeta_def} of $\zeta^i$. Using the linear growth condition of $\partial_{\nu}f $, the boundedness of $\partial_{\nu} b$, $\partial_{\nu} \sigma$, $\partial_{\nu} \sigma^0$ and Lemma \ref{lemma:moment_estimate_Y_Z_Z0_ii} and  \ref{lemma:moment_estimate_Y_Z_Z0_iJ}, 
\begin{align*}
    \mathbb{E}\Bigg[ \int^T_0 
        \sum_{i}|\zeta^i_t|^2 
        \, dt
    \Bigg]
    &\leq 
    \frac{C}{N^2} 
    \mathbb{E} \Bigg[
         \int^T_0  \sum_{i} \left( 1 + |X^{N,i}_t|^2 
        +  |Y^{N,i,i}_t|^2 
        + |Z^{N,i,i,i}_t|^2
        + |Z^{0,N,i,i}_t|^2 + |\zeta^{N,i}_t|^2 \right)
        \, dt
    \Bigg]\\
    &\quad + 
    \frac{C}{N} 
    \mathbb{E} \Bigg[
         \int^T_0  \sum_{i} \left(
         |Y^{N,i,i}_t|^2 
        + |Z^{N,i,i,i}_t|^2
        + |Z^{0,N,i,i}_t|^2\right)
        \, dt
    \Bigg]\\
    &\quad + 
    \frac{C}{N} 
    \mathbb{E} \Bigg[
         \int^T_0  \sum_{i}\sum_{j \neq i}  \left(
         |Y^{N,i,j}_t|^2 
        + |Z^{N,i,j,j}_t|^2
        + |Z^{0,N,i,j}_t|^2\right)
        \, dt
    \Bigg]\\
    &\leq 
    C + 
    \frac{C}{N} 
    \mathbb{E} \Bigg[
         \int^T_0  \sum_{i}|\zeta^{i}_t|^2 
        \, dt
    \Bigg].
\end{align*}
Take $N$ sufficiently large to conclude.
\end{proof}

The following proposition plays a central role in establishing the convergence of the equilibria.
\begin{proposition}
\label{prop:diff_estimate_for_all_i}
Let Assumptions \ref{assump:standard_assump}, \ref{assump:Hamiltonian}, and \ref{assump:displacement_monotonicity} hold, and let 
\[
    \Delta X^i = X^{N,i} - X^i, \quad
    \Delta Y_t^i = Y^{N,i,i} - Y^i, \quad
    \Delta Z_t^{i,j} = Z^{N,i,i,j} - \delta_{ij}Z^{i}, \quad
    \Delta Z_t^{0,i} = Z^{0,N,i} - Z^{0,i}, 
\]
Then, there exists a sufficiently large $N$ such that the inequality
\begin{equation}
    \begin{split}
        &\sum^N_{i=1 } \mathbb{E}\left[ \int^T_0  |\Delta X^i_t|^2 
            + \int^T_0|\Delta Y^i_t|^2
            +  \sum^{N}_{j=1 }|\Delta Z^{i,j}_t|^2
            + |\Delta Z^{0,i}_t|^2 \, dt 
        \right]\\
        &\quad \leq C + 
        C \sum^N_{i=1 } \mathbb{E}\left[ 
             |E^{G,i}|^2 
            + \int^T_0  
                |E_t^{F,i}|^2
                + |E_t^{B,i} |^2
                + |E_t^{\Sigma,i}|^2
                + |E_t^{\Sigma^0,i}|^2
             \, dt 
        \right],
    \end{split}
\end{equation}
holds for some constant $C>0$ independent of $N$, and for
\begin{equation}
\label{eq:definition_Error_terms}
    \begin{aligned}
        &E_t^{B,i}  = B_t \bigl( \Theta_t^i, L^N(\bs{\Theta}_t) \bigr) 
             - B_t\bigl(\Theta_t^i, \mathcal{L}^1(\Theta_t)\bigr), \quad 
         \\[3pt]
        &E_t^{\Sigma,i} = \Sigma_t \bigl( \Theta_t^i, L^N(\bs{\Theta}_t) \bigr) 
             - \Sigma_t\bigl(\Theta_t^i, \mathcal{L}^1(\Theta_t)\bigr), \quad 
         \\[3pt]
        &E_t^{\Sigma^0,i} 
        = \Sigma_t^0 \bigl( \Theta_t^i, L^N(\bs{\Theta}_t) \bigr) 
            - \Sigma_t^0\bigl(\Theta_t^i, \mathcal{L}^1(\Theta_t)\bigr), \quad \\
        &E_t^{F,i} 
        = F_t \bigl( \Theta_t^i, L^N(\bs{\Theta}_t) \bigr) 
            - F_t\bigl(\Theta_t^i, \mathcal{L}^1(\Theta_t)\bigr)
            + \frac{1}{N}\partial_{\mu}H_t (\Theta_t^i, \alpha_t^i, L^N(\bs{X}_t, \bs{\alpha}_t)) (X_t^i)  
            , \\[1pt]
        &E^{G,i} 
        =  G\bigl(X_T^i, L^N(\bs{X}_T)\bigr) 
           - G\bigl(X_T^i, \mathcal{L}^1(X_T)\bigr) + \frac{1}{N} \partial_{\mu} g\bigl(X_T^i, L^N(\bs{X}_T)\bigr)( X_T^i ).
    \end{aligned}
\end{equation}
\end{proposition}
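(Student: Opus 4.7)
The plan is to apply It\^o's product rule to $\sum_{i=1}^N \Delta X^i_t \cdot \Delta Y^i_t$ and separately to $\sum_{i=1}^N |\Delta Y^i_t|^2$, and then exploit the $N$-particle version of the monotonicity condition (the lemma preceding Lemma~\ref{lemma:moment_estimate_Y_Z_Z0_ii}) together with the displacement monotonicity of $G$. The main obstacle will be absorbing the interaction term $\zeta^i$, which is present in the $N$-player system but absent in the MFE system, and handling the off-diagonal components $Y^{N,i,j}, Z^{N,i,j,k}$ for $i\neq j$ arising from $\partial_\mu H^{N,i}$.

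First, using the dynamics of $\Delta X^i$ and $\Delta Y^i$ derived from \eqref{eq:N-player_systems_FBSDEs} and \eqref{eq:mean_field_MKVFBSDE}, I decompose each coefficient difference as a three-term expression of the form
\begin{align*}
B^i_t(\bs{\Theta}^N_t, \bs{\zeta}_t) - B_t\bigl(\Theta^i_t, \mathcal{L}^1(\Theta_t)\bigr)
&= \bigl[\, B^i_t(\bs{\Theta}^N_t, \bs{\zeta}_t) - B_t(\Theta^{N,i}_t, L^N(\bs{\Theta}^N_t))\, \bigr] \\
&\quad + \bigl[\, B_t(\Theta^{N,i}_t, L^N(\bs{\Theta}^N_t)) - B_t(\Theta^i_t, L^N(\bs{\Theta}_t))\, \bigr] + E^{B,i}_t,
\end{align*}
and analogously for $\Sigma, \Sigma^0, F$. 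The first bracket encodes only the change from $\zeta^i$ to $0$ inside $\Lambda$, both in the control slot and through the pushforward $\varphi_t$; by the Lipschitz continuity of $\Lambda$, it is bounded by $C|\zeta^i_t| + C\bigl(\tfrac{1}{N}\sum_k |\zeta^k_t|^2\bigr)^{1/2}$, and Proposition~\ref{prop:zeta_convergence} then renders the corresponding contribution $O(1)$ after summing $|\cdot|^2$ over $i$. The third term matches $E^{B,i}_t$ in \eqref{eq:definition_Error_terms} exactly.

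Next, summing over $i$ and taking expectation in the It\^o expansion of $\sum_i \Delta X^i \cdot \Delta Y^i$, the middle brackets combine into the expression bounded by $-C_H \sum_i |\Delta X^i_t|^2$ via the $N$-particle monotonicity lemma, while the terminal condition
\[
\Delta Y^i_T = G(X^{N,i}_T, L^N(\bs{X}^N_T)) - G(X^i_T, \mathcal{L}^1(X_T)) + \tfrac{1}{N}\partial_\mu g(\cdots)(X^{N,i}_T) - \bigl[E^{G,i} - (\tfrac{1}{N}\partial_\mu g\text{-piece})\bigr]
\]
together with the $N$-particle displacement monotonicity of $G$ yields a lower bound of the form $C_G \sum_i |\Delta X^i_T|^2$ modulo cross terms involving $E^{G,i}$. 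All remaining contributions—the $\zeta^i$ pieces, the coefficient errors $E^{\cdot,i}$, the $\tfrac{1}{N}\partial_\mu H$ correction, and $\Delta X^i \cdot \Delta Y^i$-type cross products—are handled by $\varepsilon$-Young's inequality and absorbed into the left-hand side after choosing $\varepsilon$ small and $N$ large, producing a bound on $\sum_i \mathbb{E}\bigl[\int_0^T |\Delta X^i_t|^2\,dt + |\Delta X^i_T|^2\bigr]$ by the desired right-hand side.

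Finally, to estimate $\Delta Y^i$, $\Delta Z^{i,j}$, and $\Delta Z^{0,i}$, I apply It\^o's formula to $\sum_i |\Delta Y^i_t|^2$ on $[t,T]$, much as in the proof of Lemma~\ref{lemma:moment_estimate_Y_Z_Z0_ii}. The quadratic variation supplies $\sum_i \sum_j |\Delta Z^{i,j}|^2 + \sum_i |\Delta Z^{0,i}|^2$, while the driver is decomposed in the same three-part way, with the Lipschitz-controlled piece in $(\Delta X^i, \Delta Y^i, \Delta Z^{i,i}, \Delta Z^{0,i})$, a $\zeta$-piece, and $E^{F,i}$ plus its $\partial_\mu H^{N,i}$ correction. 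The hardest sub-step is controlling the off-diagonal $Y^{N,i,j}$ and $Z^{N,i,j,k}$ terms inherited from $\partial_\mu H^{N,i}$: their $O(1/N)$ prefactor combines with the integrated bounds of Lemma~\ref{lemma:moment_estimate_Y_Z_Z0_iJ} and Proposition~\ref{prop:zeta_convergence} to yield an $O(1)$ contribution. A Gronwall step in $t$, followed by $\varepsilon$-Young absorption and combination with the forward estimate above, yields the claim.
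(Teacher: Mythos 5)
Your proposal follows essentially the same route as the paper's proof: Itô's product rule on $\sum_i \Delta X^i \cdot \Delta Y^i$ with the same three-term decomposition of each coefficient difference (the $\zeta$-to-$0$ piece controlled by the Lipschitz continuity of $\Lambda$ and Proposition~\ref{prop:zeta_convergence}, the middle piece absorbed by the $N$-particle monotonicity lemma and the displacement monotonicity of $G$ at the terminal time, and the piece matching $E^{\cdot,i}$), followed by Itô on $\sum_i |\Delta Y^i|^2$ with Lemma~\ref{lemma:moment_estimate_Y_Z_Z0_iJ} for the off-diagonal terms, Gronwall, and $\varepsilon$-Young absorption. The argument is correct and no gaps are apparent.
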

\begin{proof}
First, use It\^o's product rule to compute
\begin{align*}
&d \Delta X_t^i \cdot \Delta Y_t^i \\
&= \Big[
    - \Delta X_t^i \cdot \left( 
        F^i_t(\bs{\Theta}^N_t, \bs{\zeta}_t) - F_t(\Theta_t^i , \mathcal{L}^1(\Theta_t)) 
    \right) 
    + \Delta Y_t^i \cdot \left( 
        B^i_t(\bs{\Theta}^N_t, \bs{\zeta}_t) - B_t(\Theta_t^i , \mathcal{L}^1(\Theta_t)) 
    \right) 
    \\
    &\qquad + \Delta Z_t^{i,i} \cdot \left( 
        \Sigma^i_t(\bs{\Theta}^N_t, \bs{\zeta}_t) - \Sigma_t(\Theta_t^i , \mathcal{L}^1(\Theta_t)) 
    \right) 
    + \Delta Z_t^{0,i} \cdot \left( 
        \Sigma^{0,i}_t(\bs{\Theta}^N_t, \bs{\zeta}_t) - \Sigma^0_t(\Theta_t^i , \mathcal{L}^1(\Theta_t)) 
    \right)
    \\[-0.5em]
    &\qquad - \Delta X_t^i \cdot \frac{1}{N} \partial_{\mu} H^{N,i}_t \bigl( 
        \bs{X}^N_t,  \bs{Y}^{N,i,:}_t, \bs{Z}^{N,i,:}_t, \bs{Z}^{0,N,i,:}, \bs{\alpha}^N_t
    \bigr)( X_t^{N,i} ) 
\Bigg] dt + dM^i_t 
\\
&= \Bigg[
    - \Delta X_t^i \cdot \left( 
        F_t(\Theta_t^{N,i}, L^N(\bs{\Theta}_t^{N})) - F_t(\Theta_t^i , L^N(\bs{\Theta}_t)) 
    \right) 
    + \Delta Y_t^i \cdot \left( 
        B_t(\Theta_t^{N,i}, L^N(\bs{\Theta}_t^{N})) - B_t(\Theta_t^i ,  L^N(\bs{\Theta}_t)) 
    \right) 
    \\
    &\qquad + \Delta Z_t^{i,i} \cdot \left( 
        \Sigma_t(\Theta_t^{N,i}, L^N(\bs{\Theta}_t^{N})) - \Sigma_t(\Theta_t^i ,  L^N(\bs{\Theta}_t)) 
    \right)
    + \Delta Z_t^{0,i} \cdot \left( 
        \Sigma^0_t(\Theta_t^{N,i}, L^N(\bs{\Theta}_t^{N})) - \Sigma^0_t(\Theta_t^i ,  L^N(\bs{\Theta}_t)) 
    \right)
    \\[0.5em]
    &\qquad 
    - \Delta X_t^i \cdot \left( 
        F^i_t(\bs{\Theta}^N_t, \bs{\zeta}_t) - F_t(\Theta_t^{N,i}, L^N(\bs{\Theta}_t^{N})) 
    \right) 
    + \Delta Y_t^i \cdot \left( 
        B^i_t(\bs{\Theta}^N_t, \bs{\zeta}_t) - B_t(\Theta_t^{N,i}, L^N(\bs{\Theta}_t^{N})) 
    \right) 
    \\
    &\qquad + \Delta Z_t^{i,i} \cdot \left( 
        \Sigma^i_t(\bs{\Theta}^N_t, \bs{\zeta}_t) - \Sigma_t(\Theta_t^{N,i}, L^N(\bs{\Theta}_t^{N})) 
    \right)
    + \Delta Z_t^{0,i} \cdot \left( 
        \Sigma^{0,i}_t(\bs{\Theta}^N_t, \bs{\zeta}_t) - \Sigma^0_t(\Theta_t^{N,i}, L^N(\bs{\Theta}_t^{N})) 
    \right)
    \\[0.5em]
    &\qquad 
    - \Delta X_t^i \cdot \frac{1}{N} \bigg( 
        \partial_{\mu} H^{N,i}_t \bigl( 
            \bs{X}^N_t, \bs{Y}^{N,i,:}_t, \bs{Z}^{N,i,:}_t, \bs{Z}^{0,N,i,:}, \bs{\alpha}^N_t
        \bigr)( X_t^{N,i} ) 
        - \partial_{\mu}H_t \bigl( 
            \Theta_t^i, \alpha_t^i, L^N(\bs{X}_t, \bs{\alpha}_t) 
        \bigr) (X_t^i) 
    \bigg)
    \\
    &\qquad 
    + \left( 
        -\Delta X_t^i \cdot E_t^{F,i} 
        + \Delta Y_t^i \cdot E_t^{B,i}  
        + \Delta Z_t^{i,i} \cdot E_t^{\Sigma,i} 
        + \Delta Z_t^{i,0} \cdot E_t^{\Sigma^0,i} 
    \right) 
\Bigg] dt  + dM_t^i,
\end{align*}
where $M_t^i$ is a $\mathbb{P}^N$-martingale. 
Similarly, we can compute the terminal condition
\begin{equation*}
\begin{split}
\Delta X_T^i \cdot \Delta Y_T^i
&= \Delta X_T^i \cdot 
  \Big(
    G(X^{N,i}, L^N(\bs{X}^N_T))
    - G(X^{i}, L^N(\bs{X}_T))
  \Big) \\
&\quad + \Delta X_T^i \cdot \frac{1}{N} 
  \Big(
    \partial_{\mu} g(X^{N,i}, L^N(\bs{X}^N_T))(X^{N,i}_T)
    - \partial_{\mu} g(X^{i}, L^N(\bs{X}_T))(X^{i}_T)
  \Big) \\[0.5em]
&\quad + \Delta X_T^i \cdot E^{G,i}.
\end{split}
\end{equation*}
Thus, using the monotonicity condition in Assumption~\ref{assump:displacement_monotonicity} and the Lipschitz continuity of $\partial_x H, b, \sigma, \sigma^0, \Lambda$, we obtain
\begin{align*}
&C\sum_i\mathbb{E}\Bigg[ 
     |\Delta X^i_T| ^2 
    +   \Delta X^i_T 
    \cdot \frac{1}{N}\left(
        \partial_{\mu} g(X^{N,i}, L^N(\bs{X}^N_T))(X^{N,i}_T)
        - \partial_{\mu}g(X^{i}, L^N(\bs{X}_T))(X^{i}_T)
    \right)  +  \Delta X^i_T \cdot E^{G,i}
\Bigg] 
\\
&\quad \leq C \sum_i \mathbb{E}\Bigg[ 
    \int^T_0 -  |\Delta X^i_t| ^2 
    +  \left(
        - \Delta X_t^i \cdot E_t^{F,i} 
        + \Delta Y_t^i \cdot E_t^{B,i} 
        + \Delta Z_t^{i,i} \cdot E_t^{\Sigma,i}
        + \Delta Z_t^{i,0} \cdot E_t^{\Sigma^0,i} 
    \right) \, dt 
\Bigg]
\\
&\quad \quad + C \sum_i\mathbb{E}\Bigg[ \int^T_0 
    \big(
        |\Delta X_t^i| 
        + |\Delta Y_t^i|
        + |\Delta Z_t^{i,i}|
        + |\Delta Z_t^{i,0}| 
    \big) \, |\zeta^i_t|  \, dt 
\Bigg] \notag \\
&\quad \quad - \sum_i \mathbb{E}\Bigg[
    \int^T_0  \Delta X_t^i \cdot \frac{1}{N} \bigg(
        \partial_{\mu} H^{N,i}_t\bigl(
             \bs{X}^N_t, \bs{Y}^{N,i,:}_t, \bs{Z}^{N,i,:}_t, \bs{Z}^{0,N,i,:}, \bs{\alpha}^N_t
        \bigr)( X_t^{N,i} ) 
        - \partial_{\mu}H_t \bigl(
             \Theta_t^i, \alpha_t^i,  L^N(\bs{X}_t, \bs{\alpha}_t)
        \bigr) (X_t^i)
    \bigg) \, dt 
\Bigg].
\end{align*}
Therefore, using the Lipschitz continuity of $\Lambda$, $\partial_{\mu}g$, and $\partial_{\mu}H$ and the boundedness of $\partial_{\mu}b, \partial_{\mu}\sigma$, and $\partial_{\mu}\sigma^0$, and applying the $\varepsilon$-Young's inequality for an arbitrary $\varepsilon > 0$, we deduce
\begin{align}
&C\sum_i \mathbb{E}\Bigg[
    |\Delta X^i_T| ^2 + \int^T_0 |\Delta X^i_t| ^2 \, dt
\Bigg] 
\notag \\
&\quad \leq 
\sum_i \mathbb{E}\Bigg[ 
    \frac{1}{N}  |\Delta X^i_T| \,
    |\partial_{\mu} g(X^{N,i}, L^N(\bs{X}^N_T))(X^{N,i}_T)
    - \partial_{\mu} g(X^{i}, L^N(\bs{X}_T))(X^{i}_T)| 
    +  |\Delta X^i_T|\,  |E^{G,i}|   
\Bigg]
\notag \\
&\quad \quad + C \sum_i  \mathbb{E}\Bigg[ 
    \int^T_0 \big(
        |\Delta X_t^i| \, | E_t^{f,i}| 
        + |\Delta Y_t^i|\,|E_t^{B,i} | 
        + |\Delta Z_t^{i,i}| \, |E_t^{\Sigma,i}|
        + |\Delta Z_t^{i,0}| \, |E_t^{\Sigma^0,i}| \, 
    \big) \, dt 
\Bigg]
\notag \\
&\quad \quad + C \sum_i \mathbb{E}\Bigg[ 
    \int^T_0 \big(
        |\Delta X_t^i| 
        + |\Delta Y_t^i|
        + |\Delta Z_t^{i,i}|
        + |\Delta Z_t^{i,0}| 
        \big)  \, |\zeta^i_t|  \, dt 
\Bigg]
\notag \\
&\quad \quad + \frac{1}{N} \sum_i \mathbb{E}\Bigg[
    \int^T_0  |\Delta X_t^i| \,  
    \Big|
        \partial_{\mu}H_t \bigl(
             \Theta_t^{N,i}, \alpha_t^{N,i}, L^N(\bs{X}^N_t, \bs{\alpha}^N_t)
        \bigr) (X_t^{N,i}) 
        - \partial_{\mu}H_t \bigl(
             \Theta_t^i, \alpha_t^i, L^N(\bs{X}_t, \bs{\alpha}_t)
        \bigr) (X_t^i)
    \Big| \, dt 
\Bigg]
\notag \\
&\quad \quad + \frac{C}{N} \sum_{i} \mathbb{E}\Bigg[
    \int^T_0  |\Delta X_t^i| \, \sum_{j }  \,  \big(
        |Y_t^{N,i,j}| + |Z_t^{N,i,j,j}| + |Z_t^{0,N,i,j}|
    \big)
\Bigg] 
\notag \\[5pt]
&\quad \leq \sum_{i} \mathbb{E}\Bigg[
    \left(\frac{1}{N}+ \varepsilon \right) C  |\Delta X^i_T|^2 
        + \left(\frac{1}{N}+ \varepsilon \right)  C \int^T_0 
     \big(
    |\Delta X^i_t|^2 + |\Delta Y^i_t|^2 + |\Delta Z^{i,i}_t|^2+ |\Delta Z^{0,i}_t|^2
    \big) \, dt
\Bigg]
\notag \\
&\quad \quad + \frac{C}{\varepsilon} \mathbb{E}\sum_{i} \Bigg[ 
     |E^{G,i}|^2  
    + \int^T_0   \big( 
        |E^{f,i}_t|^2 +  E^{b,i}_t|^2 + | E^{\sigma,i}_t|^2+ | E^{\sigma^0,i}_t|^2 
    \big) 
\Bigg] 
+ C + (\frac{1}{\varepsilon} + \frac{1}{N})C \sum_{i}\mathbb{E}\Bigg[ 
    \int^T_0 |\zeta^i_t|^2 \, dt 
\Bigg]
\notag \\[5pt] 
&\quad \leq \sum_{i} \mathbb{E}\Bigg[
    \left(\frac{1}{N}+ \varepsilon \right) C |\Delta X^i_T|^2  + \left(\frac{1}{N}+ \varepsilon \right)  C \int^T_0 
     \big(
    |\Delta X^i_t|^2 + |\Delta Y^i_t|^2 + |\Delta Z^{i,i}_t|^2+ |\Delta Z^{0,i}_t|^2
    \big) \, dt
\Bigg]
\label{eq:DeltaX_estimate_for_i_with_epsilon} \\
&\quad \quad + \frac{C}{\varepsilon} +  \frac{C}{\varepsilon}  \sum_{i}\mathbb{E}\Bigg[ 
     |E^{G,i}|^2  
    + \int^T_0  \big( 
        |E^{f,i}_t|^2 +  |E^{b,i}_t|^2 + | E^{\sigma,i}_t|^2+ | E^{\sigma^0,i}_t|^2 
    \big) 
\Bigg],
\notag
\end{align}
where we applied Lemma~\ref{lemma:moment_estimate_Y_Z_Z0_iJ} in the second inequality to estimate the terms $\sum_{j \neq i}(|Y_t^{N,i,j}|^2 + |Z_t^{N,i,j,j}|^2 + |Z_t^{0,N,i,j}|^2)$, and Lemma~\ref{prop:zeta_convergence} in the third one.

Next, we are estimating the difference of the backward systems. Apply It\^o's formula to $|\Delta Y^i|^2$, and $\eta$-Young's inequality for an arbitrary $\eta>0$ to derive
\begin{align}
&\mathbb{E} \Bigg[ 
    |\Delta Y^i_t|^2
    + \int^T_t \sum_j |\Delta Z^{i,j}_s|^2 + |\Delta Z^{0,i}_s|^2 \, ds \,\Big|\, \mathcal{F}_t^N 
\Bigg] 
\notag \\[2pt]
& \leq
    C\, \mathbb{E} \Bigg[ 
        \left| G(X^{N,i}, L^N(\bs{X}^N_T))
        - G(X^{i}, L^N(\bs{X}_T)) \right|^2
        \notag \\[2pt]
        &\qquad \qquad  + \frac{C}{N^2}
        \left|
            \partial_{\mu} g(X^{N,i}, L^N(\bs{X}^N_T))(X^{N,i}_T)
            - \partial_{\mu}g(X^{i}, L^N(\bs{X}_T))(X^{i}_T) 
        \right|^2  
        +  |E^{G,i}|^2 \,\Big|\, \mathcal{F}_t^N
    \Bigg]  
    + \frac{C}{\eta} \mathbb{E} \Bigg[
        \int^T_t |\Delta Y^i_s|^2 \, ds \,\Big|\, \mathcal{F}_t^N 
    \Bigg] 
    \notag \\[2pt]
    &\quad + C\eta\, \mathbb{E} \Bigg[ 
        \int^T_t \left|
            F^i_s (\bs{\Theta}_s^{N}, \bs{\zeta}_t) 
            - F_s ( \Theta_s^{i}, L^N(\bs{\Theta}_s)) 
        \right|^2 
        \notag \\[2pt]
        &\qquad \qquad \quad  + \frac{1}{N^2}  \big|
                \partial_{\mu} H^{N,i}_s\bigl(
                    \bs{X}^N_s,  \bs{Y}^{N,i,:}_s, \bs{Z}^{N,i,:}_s, \bs{Z}^{0,N,i,:}_s, \bs{\alpha}^N_s
                \bigr)( X_s^{N,i} ) 
                - \partial_{\mu}H_t \bigl(
                     \Theta_s^i, \alpha_s^i,  L^N(\bs{X}_s, \bs{\alpha}_s) 
                \bigr)
        \big|^2
        + |E^{f,i}_s|^2 \, ds \,\Big|\, \mathcal{F}_t^N 
    \Bigg]
    \notag \\
&\leq
    C\, \mathbb{E} \Bigg[
        \left(1 + \frac{1}{N^2}\right) |\Delta X^i_T|^2 + \left( \frac{1}{N} + \frac{1}{N^3} \right) \sum_j |\Delta X^j_T|^2 \,\Big|\, \mathcal{F}_t^N 
    \Bigg]
    \label{eq:DeltaYZZ0_estimate_for_i_with_eta} \\[2pt]
    &\quad + 
    C\left( \eta + \frac{1}{\eta} + \frac{1}{N^2}\right) \mathbb{E} \Bigg[
        \int^T_t |\Delta Y^i_s|^2 \, ds \,\Big|\, \mathcal{F}_t^N 
    \Bigg] \notag \\
    &\quad + C \eta \left( 1 + \frac{1}{N^2} \right) \mathbb{E} \Bigg[ 
        \int^T_t |\Delta X^i_s|^2 + |\Delta Z^{i,i}_s|^2 + |\Delta Z^{0,i}_s|^2 \, ds \,\Big|\, \mathcal{F}_t^N 
    \Bigg]
    \notag \\[2pt]
    &\quad + C \eta \left( 1 + \frac{1}{N^2} \right)\mathbb{E} \Bigg[ \frac{1}{N}
        \int^T_t \sum_j (|\Delta X^j_s|^2 + |\Delta Y^j_s|^2 + |\Delta Z^{j,j}_s|^2 + |\Delta Z^{0,j}_s|^2) 
        \, ds \,\Big|\, \mathcal{F}_t^N
    \Bigg]
    \notag \\[2pt]
    &\quad + \frac{C}{N} \E \Bigg[
        \int^T_0 \sum_{j \neq i} (|Y_s^{N,i,j}|^2 + |Z_s^{N,i,j,j}|^2 + |Z_s^{0,N,i,j}|^2 )
        \, ds \,\Big|\, \mathcal{F}_t^N
    \Bigg] 
    \notag \\ 
    &\quad + 
    C\, \mathbb{E} \Bigg[ 
    |E^{g,i}|^2 + \eta \int^T_0 |E_s^{f,i}|^2 
    + \eta (1 + \frac{1}{N^2}) C  |\zeta^i_s|^2 +  \frac{1}{N}\sum_j |\zeta^j_s|^2 
    \, ds \,\Big|\, \mathcal{F}_t^N 
    \Bigg],\notag
\end{align}
We sum this estimate over all $i=1,\ldots, N$ and use Lemma~\ref{lemma:moment_estimate_Y_Z_Z0_iJ} and Proposition~\ref{prop:zeta_convergence} to derive
\begin{align*}
&\sum_{i}\mathbb{E} \Bigg[
    |\Delta Y^i_t|^2
    + \left(1 - \eta C - \frac{C}{N}\right) \int^T_t  \sum_{j} |\Delta Z^{i,j}_s|^2
    +  |\Delta Z^{0,i}_s|^2 
    \, ds \,\Big|\, \mathcal{F}_t^N 
\Bigg] \\
&\leq
    C \left(1 + \eta + \frac{1}{\eta}\right) \sum_{i} \mathbb{E} \Bigg[ 
        \int^T_t |\Delta Y^i_s|^2 \, ds \,\Big|\, \mathcal{F}_t^N 
    \Bigg]  \\
    &\quad + C \sum_{i} \mathbb{E} \Bigg[
        |\Delta X^i_T|^2 
        + \left(\eta + \frac{1}{N}\right) \int^T_0 |\Delta X^i_s|^2  
        \, ds \,\Big|\, \mathcal{F}_t^N 
    \Bigg] \\
    &\quad + \eta C  + C\, \mathbb{E} \sum_{i} \Bigg[ 
        |E^{g,i}|^2  + \eta \int^T_0 |E^{f,i}_s|^2 
        \, ds \,\Big|\, \mathcal{F}_t^N 
    \Bigg].
\end{align*}
Taking $\eta$ sufficiently small and $N$ sufficiently large, and then taking the expectation of both sides, we can apply Gronwall's inequality to obtain, for any $t \in [0,T]$,
\begin{equation}
\label{eq:Sum_DEltaYZZ_estimate}
    \begin{split}
        &\sum_{i} \mathbb{E} \Bigg[
            |\Delta Y^i_t|^2
            + \int^T_0 \sum_{j} |\Delta Z^{i,j}_t|^2
            + |\Delta Z^{0,i}_t|^2 
            \, dt 
        \Bigg] \\
        &\leq
            C + 
            C \, \sum_{i} \mathbb{E} \Bigg[
                |\Delta X^i_T|^2 
                + \int^T_0 |\Delta X^i_t|^2  \, dt \Bigg] 
                + C \, \mathbb{E} \sum_{i} \Bigg[  |E^{g,i}|^2  + \int^T_0  |E^{f,i}_t|^2  
                \, dt 
            \Bigg],
    \end{split}
\end{equation}
where $C>0$ depends on $\eta$ as well. 
Plug this into \eqref{eq:DeltaX_estimate_for_i_with_epsilon}, and choose $\varepsilon$ sufficiently small and $N$ sufficiently large. Then we obtain
\begin{equation}
    \begin{split}
        &\sum_{i}\mathbb{E}\Bigg[ 
            |\Delta X^i_T| ^2 + \int^T_0 |\Delta X^i_t| ^2 \, dt 
        \Bigg]
        \\
        &\quad \leq 
            C + C \mathbb{E}\sum_{i}\Bigg[
                | E^{g,i}|^2 
                + \int^T_0
                 (| E^{f,i}_t|^2 + |  E^{b,i}_t|^2 + |  E^{\sigma,i}_t|^2+ |  E^{\sigma^0,i}_t|^2 ) 
                \, dt 
            \Bigg].
            \label{eq:Sum_DeltaX_estimate}
    \end{split}
\end{equation}
Plugging \eqref{eq:Sum_DeltaX_estimate} back into \eqref{eq:Sum_DEltaYZZ_estimate}, the statement follows immediately.
\end{proof}

\begin{proof}[Proof of Theorem \ref{thm:main}]
Recall \eqref{eq:definition_Error_terms}. For example, for $E^{F,i}$,
\begin{equation*}
    | E_t^{F,i} |^2 
    \leq 2 | F_t\bigl( \Theta_t^i, L^N(\bs{\Theta}_t ) \bigr) 
    - F_t\bigl(\Theta_t^i, \mathcal{L}^1(\Theta_t)\bigr) |^2
    + \frac{2}{N^2} |\partial_{\mu}H_t  (\Theta_t^i, \alpha_t^i,  L^N(\bs{X}_t, \bs{\alpha}_t)) (X_t^i)|^2  .
\end{equation*}
Our aim is to control $\E \left[\int^T_0 \sum_i |E_t^{F,i} |^2 \, dt\right]$ by some constant $C>0$, which is independent of $i,N$. 
For the second term, we have
\begin{equation}
\label{eq:E^2_estimate_second_term}
\begin{split}
    &\frac{1}{N^2} \E \Big[
    \big| \partial_{\mu}H (t, \Theta_t^i, \alpha_t^i, L^N(\bs{X}_t, \bs{\alpha}_t)) (X_t^i)  \big|^2
    \Big]  \\
    &\leq   
        \frac{C}{N^2}  \mathbb{E} \Big[
            |X^i_t|^2 + |Y^{i}_t|^2 + |Z^{i}_t|^2 + |Z^{0,i}_t|^2 
            + \frac{1}{N}\sum_{j} (1 + |X^{j}_t|^2  + |Y^{j}_t|^2 + |Z^{j}_t|^2 + |Z^{0,j}_t|^2 ) 
        \Big].
\end{split}
\end{equation}
Next, for the first term of $E^{F,i}_t$, we follow the same argument as in the proof of \cite[Theorem~2]{jackson2024quantitative}. Set $L^{N, -i}(\bs{\Theta}_t) = \frac{1}{N-1} \sum_{j \neq i} \delta_{\Theta_t^j}$. Then, we have
\begin{equation*}
    \E \big[
    \mathcal{W}_2^2(L^N(\bs{\Theta}_t ), L^{N, -i}(\bs{\Theta}_t))\big]
    \leq \frac{C}{N} \E \big[
        |X^1_t|^2 +  |Y^1_t|^2 + |Z^1_t|^2 + |Z^{0,1}_t|^2
    \big].
\end{equation*}
Thus, it follows
\begin{equation}
\label{eq:E^2_estimate_first_term}
    \begin{split}
        &\E \big[ 
            \big|
                F_t\bigl( \Theta_t^i, L^N(\bs{\Theta}_t ) \bigr) 
                - F_t\bigl( \Theta_t^i, \mathcal{L}^1(\Theta_t)\bigr)
            \big|^2
        \big] \\
        &\leq 
        \frac{C}{N} \E \big[
            |X^1_t|^2 +  |Y^1_t|^2 + |Z^1_t|^2 + |Z^{0,1}_t|^2  
        \big] 
        + 2\E \big[ 
            \big|
                F_t\bigl( \Theta_t^i, L^{N, -i}(\bs{\Theta}_t) \bigr) 
                - F_t\bigl( \Theta_t^i, \mathcal{L}^1(\Theta_t)\bigr)
            \big|^2
        \big].
    \end{split}
\end{equation}
Thanks to assumption \ref{assump:smoothness_L-derivative}, we can use \cite[Lemma~9]{jackson2024quantitative} to compute
\begin{align}
    &\E \big[ 
        \big|
            F_t\bigl( \Theta_t^i, L^{N, -i}(\bs{\Theta}_t) \bigr) 
            - F_t\bigl(\Theta_t^i, \mathcal{L}^1(\Theta_t)\bigr)
        \big|^2
    \big] \notag \\
    &\leq \E^0 \bigg[ 
        \E^1 \big[
            \big|
                F_t\bigl( \Theta_t^i, L^{N, -i}(\bs{\Theta}_t) \bigr) 
                - F_t\bigl( \Theta_t^i, \mathcal{L}^1(\Theta_t)\bigr)
            \big|^2
        \big]
    \bigg] \notag \\
    &= \E^0 \bigg[ 
        \int_{\R^n\times \R^n \times \R^{n\times d} \times \R^{n\times d}}
            \big|
                F_t\bigl(\theta, L^{N, -i}(\bs{\Theta}_t) \bigr) 
                - F_t\bigl(\theta, \mathcal{L}^1(\Theta_t)\bigr)
            \big|^2
        \,\, \mathcal{L}^1(\Theta_t)(d\theta)
    \bigg] \notag  \\
    &\leq \E^0 \bigg[ 
        \int_{\R^n\times \R^n \times \R^{n\times d} \times \R^{n\times d}}
            \frac{C}{N}(1 + |\theta|^2 + M^2_2(\mathcal{L}^1(\Theta_t)) )
        \,\, \mathcal{L}^1(\Theta_t)(d\theta)
    \bigg] \notag \\
    &\leq \frac{C}{N} \E \bigg[ 
    (1 +  |X^1_t|^2 +  |Y^1_t|^2 + |Z^1_t|^2 + |Z^{0,1}_t|^2  ) 
    \bigg]. \notag 
\end{align}
Therefore, from \eqref{eq:E^2_estimate_second_term}, \eqref{eq:E^2_estimate_first_term}, we get
\begin{equation}
\label{eq:Control_SUM_E^F_by_constant}
    \E \left[\int^T_0 \sum_i |E_t^{F,i} |^2 \, dt \right] \leq C.
\end{equation}
Likewise, it is easy to check that similar bounds also hold for $E^{B,i}$, $E^{\Sigma,i}$, $E^{\Sigma^0,i}$, and $E^{G,i}$ (possibly with a different constant $C$).
Thus, by Proposition \ref{prop:diff_estimate_for_all_i}, we have
\begin{align}
\label{eq:sum_Delta_XYZZ0_by_constant}
&\mathbb{E}\left[
    \int^T_0 \sum_{i} (
        |\Delta X^i_t|^2 
        + |\Delta Y^i_t|^2
        + |\Delta Z^{i,i}_t|^2
        + |\Delta Z^{0,i}_t|^2 
    ) \, dt 
\right] 
\leq C .
\end{align}

Now, compute
\begin{equation*}
\begin{split}
    |\Delta X^i_t|^2 
    &\leq \int^t_0 \left| B^i_s(\bs{\Theta}^N_s, \bs{\zeta}_s) - B_s(\Theta^i_s, \mathcal{L}^1(\Theta_s)) \right|^2 \, ds   
    + \left| \int^t_0 \left( \Sigma^i_s(\bs{\Theta}^N_s, \bs{\zeta}_s) - \Sigma_s(\Theta^i_s, \mathcal{L}^1(\Theta_s)) \right) \, dW^i_s \right|^2 \\
    &\quad 
    + \left| \int^t_0 \left( \Sigma^{0,i}_s(\bs{\Theta}^N_s, \bs{\zeta}_s) - \Sigma^0_s(\Theta^i_s, \mathcal{L}^1(\Theta_s)) \right) \, dW^0_s \right|^2 .
\end{split}
\end{equation*}
We apply Burkholder-Davis-Gundy inequality to get 
\begin{align*}
\mathbb{E} \left[ \sup_{t \in [0,T]} |\Delta X^i_t|^2 \right] 
&\leq C \mathbb{E} \Bigg[ 
    \int^T_0 \left| B^i_t(\bs{\Theta}^N_t, \bs{\zeta}_t) - B_t(\Theta^i_t, \mathcal{L}^1(\Theta_t))  \right|^2 
    + \left| \Sigma^i_t(\bs{\Theta}^N_t, \bs{\zeta}_t) - \Sigma_t(\Theta^i_t, \mathcal{L}^1(\Theta_t)) \right|^2 \\
    &\qquad \qquad \quad 
    +\left| \Sigma^{0,i}_t(\bs{\Theta}^N_t, \bs{\zeta}_t) - \Sigma^0_t(\Theta^i_t, \mathcal{L}^1(\Theta_t)) \right|^2 dt 
\Bigg] \\[2pt]
&\leq C \mathbb{E} \Bigg[ 
    \int^T_0  \left| B^i_t(\bs{\Theta}^N_t, \bs{\zeta}_t) - B_t(\Theta^i_t, L^N(\bs{\Theta}_t)) \right|^2 
    +  \left| \Sigma^i_t(\bs{\Theta}^N_t, \bs{\zeta}_t) - \Sigma_t(\Theta^i_t, L^N(\bs{\Theta}_t)) \right|^2 \\
    &\qquad \qquad \quad   
    +  \left| \Sigma^{0,i}_t(\bs{\Theta}^N_t, \bs{\zeta}_t) - \Sigma^0_t(\Theta^i_t, L^N(\bs{\Theta}_t)) \right|^2
    + | E^{B,i}_t |^2 + | E^{\Sigma,i}_t |^2  + | E^{\Sigma^0,i}_t |^2 \, dt 
\Bigg] \\[2pt]
&\leq C \mathbb{E} \Bigg[ \int^T_0 
    | \Delta X^i_t |^2 + | \Delta Y^i_t |^2  + | \Delta Z^{i,i}_t |^2 + | \Delta Z^{0,i}_t |^2 \\
    &\qquad \qquad \quad  
    + \frac{1}{N} \sum_{j} \left( | \Delta X^j_t |^2 + | \Delta Y^j_t |^2  + | \Delta Z^{j,j}_t |^2 + | \Delta Z^{0,j}_t |^2 \right) \\
    &\qquad \qquad \quad 
    + |\zeta^i_t|^2 + \frac{1}{N} \sum_{j} |\zeta^j_t|^2 + | E^{B,i}_t |^2 + | E^{\Sigma,i}_t |^2 + | E^{\Sigma^0,i}_t |^2 \, dt
\Bigg] ,
\end{align*}
and sum this estimate over all $i$;
\begin{equation*}
\begin{split}
    &\sum_{i}  \mathbb{E} \left[ \sup_{t \in [0,T]} |\Delta X^i_t|^2 \right] 
    \leq C \mathbb{E} \Bigg[ \int^T_0 \sum_{i} \big(
        | \Delta X^i_t |^2 + | \Delta Y^i_t |^2  + | \Delta Z^{i,i}_t |^2 + | \Delta Z^{0,i}_t |^2 \\
        &\hspace{15em}+ |\zeta^i_t|^2 + | E^{B,i}_t |^2 + | E^{\Sigma,i}_t |^2 + | E^{\Sigma^0,i}_t |^2 
    \big) dt \Bigg].
\end{split}
\end{equation*}
We apply Lemma~\ref{prop:zeta_convergence}, \eqref{eq:Control_SUM_E^F_by_constant}, and \eqref{eq:sum_Delta_XYZZ0_by_constant} to conclude
\begin{equation*}
    \sum_{i} \mathbb{E} \left[ \sup_{t \in [0,T]} |\Delta X^i_t|^2 \right] \leq C.
\end{equation*}

Finally, using the Lipschitz continuity of $\Lambda$, we conclude that
\begin{align*}
\sum_i\mathbb{E}\Bigg[
\int^T_0  |\alpha_t^{N,i} - \alpha_t^{i}|^2   \, dt 
\Bigg]
&\leq 
C \mathbb{E}\Bigg[ \int^T_0 \sum_i ( 
    |\Delta X^i_t|^2 + |\Delta Y^i_t|^2 + |\Delta Z^i_t|^2 +|\Delta Z^i_t|^2 
    + |\zeta^i_t|^2
) \, \,  dt
\Bigg]\\
&\leq  C,
\end{align*}
where we used Lemma \ref{prop:zeta_convergence} and \eqref{eq:sum_Delta_XYZZ0_by_constant} in the second line.
\end{proof}

\section{Linear-quadratic example}
\label{sec:linear-quadratic}

In this section, we investigate a class of linear-quadratic mean field games (LQ-MFGs) with a specific structure. 
We demonstrate that the unique MFE exists for this class and that the convergence result established in Theorem~\ref{thm:main} holds.
Given $\alpha \in \mathbb{A}$, $\mu \in \mathbb{H}^2(\mathbb{F}^0; \R^n)$, and $\nu \in \mathbb{H}^2(\mathbb{F}^0; \R^\ell)$, consider the following state process of the representative player:
\begin{equation}
\label{eq:LQ-state_process}
\begin{split}
    dX_t &= \Big( A_t X_t + B_t (c_1\alpha_t - c_2\nu_t)  \Big)  dt 
    + \Big( C_t X_t + D_t (c_1\alpha_t - c_2\nu_t)  \Big) dW_t + \Big( C^0_t X_t + D^0_t (c_1\alpha_t - c_2\nu_t) \Big) dW^0_t,
\end{split}
\end{equation}
where $c_1, c_2 \in \R$. 
Although the process depends on $\alpha$ and $\nu$, we suppress this explicit dependence for notational simplicity. 
In equilibrium, the consistency condition $\nu_t = \mathbb{E}[ \alpha_t \mid \mathcal{F}_t^0 ]$ must hold.

The player aims to minimize the following quadratic cost functional:
\begin{equation*}
    \begin{split}
        J(\alpha, \mu, \nu) &= \mathbb{E} 
        \Bigg[
        \frac{1}{2} \int^T_0 \Big( X^{\top}_t Q_t X_t 
        + (X_t - S_t \mu_t)^\top \bar{Q}_t (X_t - S_t \mu_t) 
        +  \alpha_t^\top P_t \alpha_t 
        +  \nu_t ^\top \bar{P}_t  \nu_t  \Big)
        \, dt 
        \Bigg] \\
        &\quad + \mathbb{E} 
        \Bigg[
        \frac{1}{2} X^\top_T Q_T X_T + \frac{1}{2} (X_T - S_T \mu_T)^\top \bar{Q}_T (X_T - S_T \mu_T)
        \Bigg],
    \end{split}
\end{equation*}
where $M^\top$ denotes the transpose of a matrix $M$.

A linear-quadratic mean field equilibrium (LQ-MFE) is defined as a triplet $(\hat{\alpha}, \hat{\mu}, \hat{\nu}) \in \mathbb{A} \times \mathbb{S}^2(\mathbb{F}^0; \R^n) \times \mathbb{H}^2(\mathbb{F}^0; \R^\ell)$ satisfying the optimality condition:
\begin{equation}
    J(\hat{\alpha}, \hat{\mu}, \hat{\nu}) = \inf_{\alpha \in \mathbb{A}} J(\alpha, \hat{\mu}, \hat{\nu}),
\end{equation}
and the consistency conditions:
\begin{equation}
    \hat{\mu}_t = \mathbb{E}^1 \big[ \hat{X}_t  \big] \quad \text{and} \quad
    \hat{\nu}_t = \mathbb{E}^1 \big[ \hat{\alpha}_t \big], \quad \mathbb{P}^0\text{-a.s. for all } t \in [0,T],
\end{equation}
where $\hat{X}$ denotes the state process corresponding to the control $\hat{\alpha}$.

The following is the main assumption for the LQ-MFG:
\begin{assump}
\label{assump:linear-quadratic}
The coefficients of the LQ-MFG satisfy the following conditions:
\begin{enumerate}[label=(\roman*)]
    \item The functions 
    \begin{align*}
        &A, C, C^0,Q, \bar{Q} , S \colon [0,T] \to \R^{n \times n}, \\
        &B, \bar{B} \colon [0,T] \to \R^{n \times \ell}\\
        &D, \bar{D}, D^0, \bar{D}^0 \colon [0,T] \to \R^{n \times d \times \ell } , \,\text{and} \\
        &P, \bar{P}\colon [0,T] \to \R^{\ell \times \ell}
    \end{align*}
    are measurable and uniformly bounded in $t$.

    \item For all $t \in [0,T]$, the matrices satisfy 
    \[P_t, Q_t > 0, \quad \bar{Q}_tS_t \leq 0, \quad Q_t + \bar{Q}_t \geq \lambda I_\ell\] 
    for some constant $\lambda > 0$. 
    Here, $I_{\ell}$ denotes the $\ell \times \ell$ identity matrix, and the symbols $>$ and $\geq$ denote the standard partial ordering for square matrices.

    \item $c_1 = 0$, or ($c_1 \neq 0$ and $c_2 / c_1 \leq 1$).
\end{enumerate}
\end{assump}
\begin{remark}
The structural condition on the state process and Assumption \ref{assump:linear-quadratic} are sufficient to ensure that the game satisfies the monotonicity condition in Assumption \ref{assump:displacement_monotonicity}.

Unlike \cite{graber2016linear}, this setting allows the state process to depend on the distribution of the control. The constants $c_1$ and $c_2$ characterize the nature of this interaction. For instance, when $c_1 = c_2$, the player's control acts relative to the random environment $\nu$; effectively, the player controls the difference $\alpha_t - \nu_t$.When $c_2 = - c_1$, the control and the random environment affect the state dynamics in exactly the same manner.The case $c_2 = 0$ corresponds to a standard single-player optimization problem.

Note that in the cost functional $J$, the terms $\alpha$ and $\nu$ are separated to align with the separability condition in Assumption \ref{assump:standard_assump} \ref{assump:decomposition}. 
The separability condition is required primarily for the convergence result in this setting.
Thus, if we were concerned solely with the well-posedness of this LQ-MFE, we could consider a running cost term of the form $(\alpha - R_t\nu_t)^\top \bar{P}_t (\alpha - R_t\nu_t)$.
\end{remark}

To align with the notation in the previous sections, we introduce the following definitions before proving Proposition~\ref{prop:LQ-wellposedness}. 

For $(t, x, a, \mu, \nu) \in [0,T] \times \R^n \times \R^\ell \times \R^n \times \R^\ell$, the coefficients of the state process and the cost functions are defined as follows:
\begin{align*}
    b^{LQ}_t( x , a, \nu) &= A_t x + B_t (c_1 a - c_2 \nu), \\
    \sigma^{LQ}_t( x , a , \nu) &= C_t x + D_t (c_1 a - c_2 \nu), \\
    \sigma^{LQ,0}_t( x , a, \nu) &= C^0_t x + D^0_t (c_1 a - c_2 \nu), \\
    f^{LQ}_t(x, a, \mu, \nu) &= \frac{1}{2} \Big( x^\top Q_t x + a^\top P_t a 
    + (x - S_t \mu)^\top \bar{Q}_t (x - S_t \mu) + \nu^\top \bar{P}_t \nu \Big), \\
    g^{LQ}(x, \mu) &= \frac{1}{2} \Big( x^\top Q_T x + (x - S_T \mu)^\top \bar{Q}_T (x - S_T \mu) \Big).
\end{align*}
The Hamiltonian is defined as in \eqref{eq:def_Hamiltonian}:
\begin{equation*}
    \begin{split}
        H^{LQ}_t(x,y,z,z^0,a,\mu,\nu) &= f^{LQ}_t(x, a, \mu, \nu) + b^{LQ}_t(x , a, \nu) \cdot y 
        + \sigma^{LQ}_t( x , a , \nu) \cdot z + \sigma^{LQ,0}_t( x , a, \nu) \cdot z^0.
    \end{split}
\end{equation*}
The first-order derivatives with respect to $x$ and $a$ are computed as:
\begin{align*}
    \partial_x H^{LQ} &= Q_t x + \bar{Q}_t (x - S_t \mu) + A_t^\top y + C_t^\top z + (C^0_t)^\top z^0, \\
    \partial_a H^{LQ} &= P_t a + c_1 \left( B_t^\top y + D_t^\top z + (D^0_t)^\top z^0 \right).
\end{align*}
By the first-order condition $\partial_a H^{LQ} = 0$, the optimizer $\Lambda^{LQ}$ of the Hamiltonian $H^{LQ}$ (see Lemma~\ref{lemma:optimizer_of_Hamiltonian}) is given by
\begin{equation*}
    \Lambda^{LQ}_t(y,z,z^0) = - c_1 P^{-1}_t \left( B_t^\top y + D_t^\top z + (D^0_t)^\top z^0 \right).
\end{equation*}
Let us now define the coefficients of the Hamiltonian system. For $\theta = (x,y,z,z^0) \in \R^n \times \R^n \times \R^{n\times d} \times \R^{n\times d}$ and $\xi \in \mathcal{P}_2 (\R^n \times \R^n \times \R^{n\times d} \times \R^{n\times d})$, we define:
\begin{align*}
    B^{LQ}_t(\theta, \xi) 
    &:= A_t x - c_1^2 B_t P_t^{-1} \left( B_t^\top y + D_t^\top z + (D^0_t)^\top z^0 \right) 
        + c_1 c_2 B_t P_t^{-1} \int \left( B_t^\top v + D_t^\top w + (D^0_t)^\top w^0 \right) \, d\xi(u,v,w,w^0), \\[1ex]
    \Sigma^{LQ}_t(\theta, \xi) 
    &:= C_t x - c_1^2 D_t P_t^{-1} \left( B_t^\top y + D_t^\top z + (D^0_t)^\top z^0 \right)
        + c_1 c_2 D_t P_t^{-1} \int \left( B_t^\top v + D_t^\top w + (D^0_t)^\top w^0 \right)\, d\xi(u,v,w,w^0), \\[1ex]
    \Sigma^{0,LQ}_t(\theta, \xi) 
    &:= C^0_t x - c_1^2 D^0_t P_t^{-1} \left( B_t^\top y + D_t^\top z + (D^0_t)^\top z^0 \right) 
        + c_1 c_2 D^0_t P_t^{-1} \int \left( B_t^\top v + D_t^\top w + (D^0_t)^\top w^0 \right) \, d\xi(u,v,w,w^0), \\[1ex]
    F^{LQ}_t(\theta, \xi) 
    &:= Q_t x + \bar{Q}_t \left(x - S_t \int u \, d\xi(u,v,w,w^0)\right) 
        + A_t^\top y + C_t^\top z + (C^0_t)^\top z^0, \\[1ex]
    G^{LQ}(x, \xi) 
    &:= Q_T x + \bar{Q}_T \left(x - S_T \int u \, d\xi(u,v,w,w^0)\right).
\end{align*}

We are now ready to state the main proposition of this section.
\begin{proposition}
\label{prop:LQ-wellposedness}
Let Assumption~\ref{assump:linear-quadratic} be in force. Then:
\begin{enumerate}[label=(\roman*)]
    \item The coefficients $(b^{LQ}, \sigma^{LQ}, \sigma^{LQ, 0}, f^{LQ}, g^{LQ})$ and the Hamiltonian $H^{LQ}$ satisfy Assumptions~\ref{assump:standard_assump} and \ref{assump:Hamiltonian}.
    \item The coefficients $(B^{LQ}, \Sigma^{LQ}, \Sigma^{LQ, 0}, F^{LQ}, G^{LQ})$ satisfy Assumptions~\ref{assump:displacement_monotonicity} and \ref{assump:smoothness_L-derivative}.
\end{enumerate}
\end{proposition}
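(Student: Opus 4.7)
The plan for part (i) is largely routine bookkeeping for the linear-quadratic form: the coefficients $b^{LQ}, \sigma^{LQ}, \sigma^{LQ,0}$ are affine in $(x, a, \nu)$ with uniformly bounded time-dependent matrices, so the differentiability, Lipschitz continuity, linear growth and bounded $L$-derivative conditions of Assumption \ref{assump:standard_assump} hold at once; the quadratic growth of $f^{LQ}$ and $g^{LQ}$ and the separability decomposition \ref{assump:decomposition} (grouping the $\mu$ and $\nu$ dependence into the $\varphi^{(2)}$ pieces) are read off by inspection. For Assumption \ref{assump:Hamiltonian}, $\partial_x H^{LQ}$ and $\partial_\mu H^{LQ}$ are linear in their arguments with bounded matrix coefficients, strong $a$-convexity of $H^{LQ}$ follows from the uniform positive-definiteness of $P_t$ (with $\gamma$ equal to a lower eigenvalue bound), and convexity of $x \mapsto g^{LQ}(x,\mu)$ and $x \mapsto H^{LQ}_t(x, \cdot)$ follows from $\partial_{xx}^2 = Q_t + \bar Q_t \geq \lambda I_\ell$.

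For part (ii), the terminal condition \eqref{eq:displacement_monotonicity} is the easier half: a direct computation gives
\[
G^{LQ}(X, \mathcal{L}(X)) - G^{LQ}(X', \mathcal{L}(X')) = (Q_T + \bar Q_T)\Delta X - \bar Q_T S_T \, \mathbb{E}[\Delta X],
\]
so
\[
\mathbb{E}\bigl[\Delta X \cdot \bigl(G^{LQ}(X, \mathcal{L}(X)) - G^{LQ}(X', \mathcal{L}(X'))\bigr)\bigr] = \mathbb{E}\bigl[\Delta X^\top (Q_T + \bar Q_T) \Delta X\bigr] - \mathbb{E}[\Delta X]^\top \bar Q_T S_T \, \mathbb{E}[\Delta X],
\]
which is bounded below by $\lambda \mathbb{E}[|\Delta X|^2]$ via $Q_T + \bar Q_T \geq \lambda I_\ell$ and $\bar Q_T S_T \leq 0$, giving $C_G = \lambda$.

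The running monotonicity \eqref{eq:standard_monotonicity} is the main obstacle. Writing $K_t := B_t^\top \Delta Y + D_t^\top \Delta Z + (D_t^0)^\top \Delta Z^0$ and expanding the four differences $F^{LQ}, B^{LQ}, \Sigma^{LQ}, \Sigma^{0,LQ}$, the mixed cross-terms between $\Delta X$ and $(\Delta Y, \Delta Z, \Delta Z^0)$ coming via $A_t^\top, C_t^\top, (C_t^0)^\top$ in $-\Delta X \cdot (F^{LQ}\text{-diff})$ cancel exactly against the contributions from $A_t, C_t, C_t^0$ in $\Delta Y \cdot (B^{LQ}\text{-diff}) + \Delta Z \cdot (\Sigma^{LQ}\text{-diff}) + \Delta Z^0 \cdot (\Sigma^{0,LQ}\text{-diff})$ by the adjoint identity in the relevant inner products. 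What remains reduces to
\[
-\mathbb{E}\bigl[\Delta X^\top(Q_t + \bar Q_t) \Delta X\bigr] + \mathbb{E}[\Delta X]^\top \bar Q_t S_t \, \mathbb{E}[\Delta X] - c_1^2 \, \mathbb{E}\bigl[K_t^\top P_t^{-1} K_t\bigr] + c_1 c_2 \, \mathbb{E}[K_t]^\top P_t^{-1} \mathbb{E}[K_t].
\]
Jensen's inequality for the quadratic form induced by $P_t^{-1} > 0$ gives $\mathbb{E}[K_t^\top P_t^{-1} K_t] \geq \mathbb{E}[K_t]^\top P_t^{-1} \mathbb{E}[K_t]$, so the $K$-block is bounded above by $c_1(c_2 - c_1) \, \mathbb{E}[K_t]^\top P_t^{-1} \mathbb{E}[K_t]$, which is $\leq 0$ by Assumption \ref{assump:linear-quadratic}(iii). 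Combined with $Q_t + \bar Q_t \geq \lambda I$ and $\bar Q_t S_t \leq 0$, the whole expression is $\leq -\lambda \mathbb{E}[|\Delta X|^2]$, yielding $C_H = \lambda$.

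Assumption \ref{assump:smoothness_L-derivative} is essentially automatic: each of $B^{LQ}, \Sigma^{LQ}, \Sigma^{0,LQ}, F^{LQ}, G^{LQ}$ depends on its measure argument only through an integral of an affine functional, so the first $L$-derivative is constant in the measure variable and Lipschitz in $p$ with uniformly bounded constants, while the second $L$-derivative vanishes identically; boundedness and Lipschitz continuity are then immediate. The key technical point of the whole proposition is therefore the running monotonicity computation, and specifically the combination of Jensen's inequality on the $K_t$-quadratic form with the sign condition $c_1(c_2 - c_1) \leq 0$ that Assumption \ref{assump:linear-quadratic}(iii) is precisely designed to encode.
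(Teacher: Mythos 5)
Your proposal is correct and follows essentially the same route as the paper: the paper's proof verifies only the monotonicity condition of Assumption~\ref{assump:displacement_monotonicity}, via exactly your computation --- cancellation of the adjoint cross-terms in $A_t, C_t, C^0_t$, the lower bound from $Q+\bar Q\geq\lambda I$ and $\bar Q S\leq 0$, and the Jensen-type inequality $\E[\Psi_t^\top P_t^{-1}\Psi_t]\geq \E[\Psi_t]^\top P_t^{-1}\E[\Psi_t]$ combined with $c_1(c_2-c_1)\leq 0$ (your $K_t$ is the paper's $\Psi_t$). Your additional remarks on part (i) and Assumption~\ref{assump:smoothness_L-derivative} are accurate and merely make explicit what the paper leaves as routine.
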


\begin{remark}
Thanks to Proposition~\ref{prop:LQ-wellposedness}, Theorem~\ref{thm:unique_existence_MFE} and Theorem~\ref{thm:wellposedness_conditional_MKVFBSDE} guarantee the unique existence of the LQ-MFE, and Theorem~\ref{thm:main} establishes the $O(N^{-1})$ convergence result.

Typically, as seen in \cite[Theorem~3.2]{bensoussan2016linear} and \cite[(2.23)]{bensoussan2025linear}, the unique existence of an LQ-MFE reduces to the well-posedness of corresponding FBODEs, which is derived by taking the expectation of the FBSDE obtained via the SMP.
Here, in contrast, by imposing specific structural conditions, we directly prove the well-posedness of the FBSDE.

To obtain a closed-loop equilibrium, one might employ an affine ansatz, such as $Y_t = K_t X_t + \phi_t$, for the solution of the FBSDE system. This would lead to a Riccati equation, reducing the problem to the solvability of that equation.
However, since such an endeavor lies beyond the scope of this paper, we focus here on establishing the unique existence of the open-loop control.
\end{remark}

\begin{proof}[Proof of Proposition~\ref{prop:LQ-wellposedness}]
We verify that the coefficients $(B^{LQ}, \Sigma^{LQ}, \Sigma^{LQ, 0}, F^{LQ}, G^{LQ})$ satisfy the monotonicity condition in Assumption~\ref{assump:displacement_monotonicity}.
Let $\Theta = (X, Y, Z, Z^0)$ and $\Theta' = (X', Y', Z', Z^{0\prime})$ be square-integrable random variables of appropriate dimensions. We denote the differences as:
\begin{equation*}
    \Delta X = X - X', \quad \Delta Y = Y - Y', \quad \Delta Z = Z - Z', \quad \Delta Z^0 = Z^0 - Z^{0\prime}.
\end{equation*}
First, we compute the term involving the terminal cost $G^{LQ}$:
\footnote{Here, in this proof, the notation $\langle x, y \rangle$ (resp. $\langle M, N \rangle$) denotes the standard inner product on $\R^k$  (resp. the Frobenius inner product defined by $\operatorname{trace}(M^\top N)$), for $x,y \in \R^k$ (resp. $M,N \in \R^{k \times k}$).}
\begin{align*}
    \mathbb{E}\left[ \left\langle \Delta X, G^{LQ}(X, \mathcal{L}(X)) - G^{LQ}(X', \mathcal{L}(X')) \right\rangle \right]  
    &= \mathbb{E}\left[ \langle \Delta X, (Q_T + \bar{Q}_T) \Delta X \rangle \right] 
    - \mathbb{E}\left[ \langle \Delta X, \bar{Q}_T S_T \mathbb{E}[\Delta X] \rangle \right] \\
    &\quad \geq \lambda \mathbb{E}\left[ |\Delta X|^2 \right],
\end{align*}
where we used the condition $Q_T + \bar{Q}_T \ge \lambda I$ and $\bar{Q}_T S_T \le 0$. 

Next, compute
\begin{align*}
    & \mathbb{E}\Big[ 
        - \langle \Delta X, F^{LQ}_t(\Theta, \mathcal{L}(\Theta)) - F^{LQ}_t(\Theta', \mathcal{L}(\Theta')) \rangle 
        + \langle \Delta Y, B^{LQ}_t(\Theta, \mathcal{L}(\Theta)) - B^{LQ}_t(\Theta', \mathcal{L}(\Theta')) \rangle \\
        &\qquad  + \langle \Delta Z, \Sigma^{LQ}_t(\Theta, \mathcal{L}(\Theta)) - \Sigma^{LQ}_t(\Theta', \mathcal{L}(\Theta')) \rangle 
        + \langle \Delta Z^0, \Sigma^{0,LQ}_t(\Theta, \mathcal{L}(\Theta)) - \Sigma^{0,LQ}_t(\Theta', \mathcal{L}(\Theta')) \rangle
    \Big] \\[1ex]
    %
    \leq & \mathbb{E} \Bigg[ 
        - \langle \Delta X, (Q_t + \bar{Q}_t) \Delta X \rangle 
        + \langle \mathbb{E}[\Delta X], \bar{Q}_t S_t \mathbb{E}[\Delta X] \rangle \\
        %
        &\quad 
            - \Big\langle \Delta X, A_t^\top \Delta Y + C_t^\top \Delta Z + (C^0_t)^\top \Delta Z^0 \Big\rangle
            + \Big\langle A_t \Delta X, \Delta Y \Big\rangle 
            + \Big\langle C_t \Delta X, \Delta Z \Big\rangle
            + \Big\langle C^0_t \Delta X, \Delta Z^0 \Big\rangle
         \\
        %
        &\quad - c^2_1 \Big| P_t^{-\frac{1}{2}} \Psi_t \Big|^2 
        + c_1 c_2 \Big\langle P_t^{-\frac{1}{2}} \mathbb{E}[\Psi_t], P_t^{-\frac{1}{2}} \Psi_t \Big\rangle
    \Bigg] \\
    & \quad \left( \text{where } \Psi_t := B_t^\top \Delta Y + D_t^\top \Delta Z + (D^0_t)^\top \Delta Z^0 \right) \\[1ex]
    %
    \leq & 
    -\mathbb{E} \left[ \langle \Delta X, (Q_t + \bar{Q}_t) \Delta X \rangle \right] 
    + \langle \mathbb{E}[\Delta X], \bar{Q}_t S_t \mathbb{E}[\Delta X] \rangle
    - \mathbb{E} \Bigg[ c^2_1 \Big| P_t^{-\frac{1}{2}} \Psi_t \Big|^2 - c_1 c_2 \Big| P_t^{-\frac{1}{2}} \mathbb{E} [\Psi_t] \Big|^2 \Bigg] \\[1ex]
    \leq & - \lambda \mathbb{E} \left[ |\Delta X|^2 \right],
\end{align*}
where the last inequality follows from the conditions $Q_t + \bar{Q}_t \ge \lambda I$ and $\bar{Q}_t S_t \leq 0$, the inequality $\mathbb{E} [|\boldsymbol{U}|^2 ] \geq |\mathbb{E} [\boldsymbol{U}]|^2$ for any random vector $\boldsymbol{U}$, and the assumptions on $c_1$ and $c_2$.
\end{proof}

\begin{appendix}
\section{Well-posedness of conditional McKean--Vlasov FBSDEs}
\label{sec:well_posedness_cond_MKVFBSDE}
In this section, we prove the well-posedness result for the following conditional McKean--Vlasov Equation:
\begin{align}
\left\{
\begin{aligned}
&d X_t = B_t( X_t, Y_t , Z_t, \mathcal{L}^1 (\Theta_t))\ dt 
+ \Sigma_t ( X_t, Y_t , Z_t,  \mathcal{L}^1 (\Theta_t)) \ d\overline{W}_t
\\
&dY_t =  -F_t ( X_t, Y_t , Z_t,  \mathcal{L}^1 (\Theta_t))  dt 
+ Z_t \ d\overline{W}_t,  \\
& X_0  = X^1_0, \hspace{1cm}Y_T = G(X_T, \mathcal{L}^1 (X_T)).
\end{aligned}
\right.
\label{eq:conditional_MKVFBSDE}
\end{align}
where $\overline{W} := (W, W^0)$ is a $(2\times d)$-dimensional Brownian Motion, $\Theta = (X, Y, Z)$, and $(\mathcal{L}^1(\Theta_t) )_{t\in[0,T]} $ is a $\mathbb{F}^0$-progressively measurable version.
Throughout this section, we only work with $(\Omega, \mathcal{F}, \mathbb{F},  \mathbb{P})$, and thus we use the shorthand notation $\mathbb{H}^2(\R^k) = \mathbb{H}^2(\mathbb{F}; \R^k)$.

Once we obtain the unique existence for Equation \eqref{eq:conditional_MKVFBSDE}, it is easy to prove the unique existence of the MFE as stated in Theorem \ref{thm:unique_existence_MFE}. Specifically, we only need to check the well-posedness of the system \eqref{eq:mean_field_MKVFBSDE}. The proof of the theorem is also provided at the end of this section.

\begin{assump}
\label{assump:well-posedness_conditional_MKVFBSDE}
Let 
\begin{align*}
&B, F : [0,T] \times \R^n \times \R^n \times \R^{2(n \times d)} \times \mathcal{P}_2(\R^n) \to \R^n, \\
&\Sigma : [0,T] \times \R^n \times \R^n \times \R^{2(n \times d)} \times \mathcal{P}_2(\R^n) \to \R^{n \times 2d}, \quad \text{and}\\
&G : \R^n \times \mathcal{P}_2(\R^n) \to \R^n
\end{align*}
be Borel measurable and satisfy:
\begin{enumerate}[label=(\roman*)]
  \item The functions $\varphi = B, \Sigma, F, G$ are Lipschitz continuous uniformly in t; that is, there exists a constant $L_f$ such that, for any $(x,y,z,\xi), (x',y',z',\xi') \in \R^n \times \R^n \times \R^{n \times 2d} \times \mathcal{P}_2(\R^{n\times n \times 2d})$, it holds
  \[
    |\varphi_t(x,y,z,\xi) - \varphi_t(x',y',z',\xi')| \leq L_f (|x - x'| + |y - y'| + |z - z'| + \mathcal{W}_2(\xi, \xi')).
  \]

  \item \label{assump:monotonicity_for_well_posedness}
    There exists $C_f> 0$  such that for any square integrable random variables $\Theta = (X,Y,Z), \Theta' = (X',Y',Z')$ of appropriate dimensions,
  \[
      \begin{aligned}
      \mathbb{E}\Big[ \, 
        -\ &\Delta X \cdot \left( F_t(\Theta, \mathcal{L}^1(\Theta)) - F_t(\Theta, \mathcal{L}^1(\Theta')) \right)\\
        +\  &\Delta Y \cdot \left( B_t(\Theta, \mathcal{L}^1(\Theta)) - B_t(\Theta', \mathcal{L}^1(\Theta')) \right) \\
        +\ &\Delta Z \cdot \left( \Sigma_t(\Theta, \mathcal{L}^1(\Theta)) - \Sigma_t(\Theta', \mathcal{L}^1(\Theta')) \right) 
      \, \Big] 
      \leq -C_f \mathbb{E}[|X - X'|^2],
      \end{aligned}
  \]
  and
  \[
      \mathbb{E} \left[ \Delta X \cdot \left( G(X, \mathcal{L}^1(X_T)) - G(X', \mathcal{L}^1(X_T')) \right) \right] 
      \geq C_f \mathbb{E}[|X - X'|^2] . 
  \]
\end{enumerate}
\end{assump}

Now we are ready to state our well-posedness result:
\begin{theorem}\label{thm:wellposedness_conditional_MKVFBSDE}
Let Assumption \ref{assump:well-posedness_conditional_MKVFBSDE} hold. Then, there exists a unique solution $(X,Y,Z) \in \mathbb{H}^2(\R^n) \times \mathbb{H}^2(\R^n) \times \mathbb{H}^2(\R^{2(n\times d)})$ to the equation \eqref{eq:conditional_MKVFBSDE}.
\end{theorem}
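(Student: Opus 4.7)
The plan is to establish uniqueness directly from the monotonicity, and prove existence via the method of continuation in the spirit of Peng--Wu and Hu--Peng, with an interpolation family designed so that the monotonicity constant does not degenerate as the interpolation parameter varies.

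\textbf{Uniqueness.} Given two solutions $(X,Y,Z)$ and $(X',Y',Z')$, I would apply It\^o's formula to $\langle X_t - X'_t, Y_t - Y'_t\rangle$ on $[0,T]$, take expectations, and invoke Assumption~\ref{assump:well-posedness_conditional_MKVFBSDE}\ref{assump:monotonicity_for_well_posedness} on both the running and the terminal data. The two monotonicity inequalities then combine, after cancellation of the martingale increments, to force
\[
C_f\,\mathbb{E}\bigl[|X_T-X'_T|^2\bigr] + C_f\int_0^T \mathbb{E}\bigl[|X_t-X'_t|^2\bigr]dt \leq 0,
\]
so $X = X'$; classical linear BSDE well-posedness then gives $Y=Y'$, $Z=Z'$.

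\textbf{Continuation family.} For $\lambda\in[0,1]$ and inputs $(I^B,I^\Sigma,I^F)$ in the appropriate $\mathbb{H}^2$ spaces and $I^G \in L^2(\Omega,\mathcal{F}_T;\R^n)$, consider
\[
\left\{
\begin{aligned}
dX_t &= \bigl[\lambda B_t(\Theta_t,\mathcal{L}^1(\Theta_t)) + I^B_t\bigr]dt + \bigl[\lambda \Sigma_t(\Theta_t,\mathcal{L}^1(\Theta_t)) + I^\Sigma_t\bigr]d\overline{W}_t,\\
-dY_t &= \bigl[\lambda F_t(\Theta_t,\mathcal{L}^1(\Theta_t)) + (1-\lambda)X_t + I^F_t\bigr]dt - Z_t\,d\overline{W}_t,\\
X_0 &= X^1_0, \qquad Y_T = \lambda G(X_T,\mathcal{L}^1(X_T)) + (1-\lambda)X_T + I^G.
\end{aligned}
\right.
\]
The linear tilts $(1-\lambda)X$ and $(1-\lambda)X_T$ are designed so that the interpolated coefficients satisfy Assumption~\ref{assump:well-posedness_conditional_MKVFBSDE}\ref{assump:monotonicity_for_well_posedness} with the uniform constant $C_\star:=\min(C_f,1)>0$, independently of $\lambda$. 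At $\lambda=0$ the forward equation is linear with known coefficients and the backward equation has a driver depending only on the already-determined $X$, so solvability is classical; the target equation~\eqref{eq:conditional_MKVFBSDE} corresponds to $\lambda=1$ with all inputs zero.

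\textbf{Contraction step.} Suppose the $\lambda_0$-system is uniquely solvable for every input tuple. For $\delta>0$ and a guess $\Theta^{(0)}=(X^{(0)},Y^{(0)},Z^{(0)})\in\mathbb{H}^2$, define $\Phi(\Theta^{(0)})=\Theta^{(1)}$ as the unique solution of the $\lambda_0$-system with shifted inputs
\[
\begin{aligned}
\tilde I^B_t &= I^B_t + \delta B_t(\Theta^{(0)}_t,\mathcal{L}^1(\Theta^{(0)}_t)), & \tilde I^\Sigma_t &= I^\Sigma_t + \delta \Sigma_t(\Theta^{(0)}_t,\mathcal{L}^1(\Theta^{(0)}_t)),\\
\tilde I^F_t &= I^F_t + \delta\bigl[F_t(\Theta^{(0)}_t,\mathcal{L}^1(\Theta^{(0)}_t)) - X^{(0)}_t\bigr], & \tilde I^G &= I^G + \delta\bigl[G(X^{(0)}_T,\mathcal{L}^1(X^{(0)}_T)) - X^{(0)}_T\bigr],
\end{aligned}
\]
so that any fixed point of $\Phi$ solves the $(\lambda_0+\delta)$-system with the original inputs. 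Given two guesses, writing $\Delta^{(k)}=\Theta^{(k)}-\widetilde\Theta^{(k)}$ and applying It\^o's formula to $\langle \Delta X^{(1)}_t,\Delta Y^{(1)}_t\rangle$, the uniform monotonicity absorbs the level-$(1)$ nonlinear differences on the left-hand side and yields
\[
C_\star\,\mathbb{E}\Bigl[|\Delta X^{(1)}_T|^2 + \int_0^T |\Delta X^{(1)}_t|^2\,dt\Bigr] \leq C\,\delta\,\|\Delta^{(0)}\|_{\mathbb{H}^2}\,\|\Delta^{(1)}\|_{\mathbb{H}^2},
\]
with $C$ depending only on $L_f$ and $T$. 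Coupling this with standard a priori estimates for the linear SDE and BSDE satisfied by $\Delta X^{(1)}$ and $(\Delta Y^{(1)},\Delta Z^{(1)})$ upgrades the bound to $\|\Delta^{(1)}\|_{\mathbb{H}^2}^2\leq C'\delta\,\|\Delta^{(0)}\|_{\mathbb{H}^2}^2$ with $C'$ independent of $\lambda_0$. Choosing $\delta_0>0$ with $C'\delta_0<1$, Banach's theorem yields a unique fixed point for every $\delta\leq\delta_0$, and since $\delta_0$ is independent of $\lambda_0$, finitely many iterations extend solvability from $\lambda=0$ to $\lambda=1$.

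\textbf{Main obstacle.} The delicate point is the uniformity in $\lambda_0$ of the constant $C'$ in the contraction estimate; this is what keeps the step size $\delta_0$ bounded away from zero over the whole interval $[0,1]$. It rests entirely on the monotonicity constant of the $\lambda_0$-system being $C_\star=\min(C_f,1)$ uniformly in $\lambda_0$. The linear tilt $(1-\lambda)X$ is introduced precisely to guarantee this: without it the effective monotonicity would be $\lambda_0 C_f$, which vanishes at the base point $\lambda_0=0$ and would prevent the induction from starting.
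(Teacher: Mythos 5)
Your proposal is correct and follows essentially the same strategy as the paper's proof: uniqueness via It\^o's formula applied to $\langle \Delta X_t,\Delta Y_t\rangle$ combined with the two monotonicity inequalities, and existence via the method of continuation, where a linear tilt keeps the interpolated coefficients monotone with the uniform constant $\min(C_f,1)$ so that the contraction step size is independent of the base point. The only substantive difference is your interpolation family, which omits the paper's additional tilts $-(1-\lambda)y$ in the drift and $-(1-\lambda)z$ in the volatility; this is harmless for the monotonicity computation (the extra terms only produce discarded $-|\Delta Y|^2-|\Delta Z|^2$ contributions in the paper's bound) and has the mild advantage that your base case $\lambda=0$ is a decoupled SDE--BSDE pair solvable by elementary means, whereas the paper's FBSDE$(0)$ is a fully coupled linear monotone system whose solvability must be imported from \cite[Lemma~8.4.3]{zhang2017backward}.
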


\begin{remark}
Although the above theorem only guarantees that $X$ (resp. $Y$) is in $\mathbb{H}^2(\R^n)$ and not necessarily in $\mathbb{S}^2(\R^n)$, if the coefficients $B,\Sigma$ (resp. $F$) additionally satisfy the linear growth condition uniformly in $t$, it is straightforward to check that $\E \sup_{t\in[0,T]} |X_t|^2 < \infty $ (resp. $\E \sup_{t\in[0,T]} |Y_t|^2 < \infty $), and thus $X \in \mathbb{S}^2(\R^n)$ (resp. $Y \in \mathbb{S}^2(\R^n)$), as in Lemma~\ref{lemma:SMP_MFG}. 
\end{remark}

\begin{proof}[Proof of Uniqueness of Theorem \ref{assump:well-posedness_conditional_MKVFBSDE}]
Let $\Theta$ and $\Theta'$ be two solutions, and 
denote for $\varphi = B, \Sigma, F$,
\begin{align*}
\Delta \Theta &:= \Theta - \Theta', \\
\Delta \varphi_t &:= \varphi_t(\Theta_t,\mathcal{L}^1(\Theta_t)) - \varphi_t(\Theta_t', \mathcal{L}^1(\Theta_t')), \\
\Delta G &:= G(X_T,\mathcal{L}^1(X_T)) - G(X_T', \mathcal{L}^1(X_T')).
\end{align*}
Then,
\[
\begin{cases}
\Delta X_t = \int_0^t \Delta B_s \, ds + \int_0^t \Delta \Sigma_s \, d\bar{W}_s, \\
\Delta Y_t = \Delta G + \int_t^T \Delta F_s \, ds - \int_t^T \Delta Z_s \, d\bar{W}_s.
\end{cases}
\]
Applying Itô's formula on $\Delta X_t \Delta Y_t$, we have
\begin{align*}
d(\Delta X_t \Delta Y_t) 
&= \Delta X_t \, d\Delta Y_t + \Delta Y_t \, d\Delta X_t + \Delta \Sigma_t \Delta Z_t \, dt \\
&= [-\Delta F_t \Delta X_t + \Delta B_t \Delta Y_t + \Delta \Sigma_t \Delta Z_t] dt 
+ [\Delta X_t \Delta Z_t + \Delta \Sigma_t \Delta Y_t] d\bar{W}_t.
\end{align*}
Note that $\Delta X_0 = 0$ and $\Delta Y_T = \Delta G$. 
Since we know from the standard estimates for SDEs and BSDEs that $\E \left[\sup_{t\in[0,T]} |\Delta X_t|^2 + \sup_{t\in[0,T]}|\Delta Y_t|^2  \right] < \infty$,
\begin{align*}
\mathbb{E}[\Delta G \Delta X_T] 
&= \mathbb{E}[\Delta Y_T \Delta X_T - \Delta Y_0 \Delta X_0] \\
&= \mathbb{E} \left[ \int_0^T [-\Delta F_t \Delta X_t + \Delta B_t \Delta Y_t + \Delta \Sigma_t \Delta Z_t] dt \right].
\end{align*}
By Assumption \ref{assump:well-posedness_conditional_MKVFBSDE}, we get
\[
0 \leq - C_f \mathbb{E} \left[ |\Delta X_T|^2 +  \int_0^T |\Delta X_t|^2  dt \right].
\]
Thus, from the standard estimates for BSDEs, it is straightforward to obtain $\E \left[ \int_0^T |\Delta Y_t|^2 + |\Delta Z_t|^2 \, dt \right] = 0$.
\end{proof}

Next, we prove the existence of a solution via the method of continuation. 
This method was developed for classical FBSDEs in \cite{hu1995solution,peng1999fully}, for MKV-FBSDEs in \cite{bensoussan2015well}, and for conditional MKV-FBSDEs in \cite{jackson2024quantitative} (where the flow of conditional probability is given by $(\mathcal{L}^1(X_t))_{t \in [0,T]}$, in contrast to $(\mathcal{L}^1(\Theta_t))_{t \in [0,T]}$ considered here).

First given some $\delta>0$,  consider the following system  for some $(b^0, \sigma^0, f^0, g^0)$:
\begin{equation}
\label{eq:FBSDE_delta}
\left\{
\begin{aligned}
&X_t = \xi + \int_0^t \left[ B_s^\delta(\Theta_s, \mathcal{L}^1(\Theta_s)) + b_s^0 \right] ds + \int_0^t \left[ \Sigma_s^\delta(\Theta_s, \mathcal{L}^1(\Theta_s)) + \sigma_s^0 \right] d\overline{W}_s, \\
&Y_t = G^\delta(X_T, \mathcal{L}^1(X_T)) + g^0 + \int_t^T \left[ F_s^\delta(\Theta_s, \mathcal{L}^1(\Theta_s)) + f_s^0 \right] ds - \int_t^T Z_s d\overline{W}_s
\end{aligned}
\right.
\end{equation}
where we defined the functions
\begin{align*}
B_t^\delta(\theta,\xi) &:= \delta B_t(\theta, \xi) - (1 - \delta)y, \\
\Sigma_t^\delta(\theta, \xi) &:= \delta \Sigma_t(\theta,\xi) - (1 - \delta)z, \\
F_t^\delta(\theta,\xi) &:= \delta F_t(\theta,\xi) + (1 - \delta)x, \\
G^\delta(x,\mu) &:= \delta G(x,\mu) + (1 - \delta)x,
\end{align*}
for 
$
\theta = (x, y, z)  \in \R^n \times \R^n \times \R^{2\times(n \times d)}
$
and 
$ \xi \in  \mathcal{P}_2(\R^n\times \R^n \times \R^{2\times(n\times d)})$.

We denote by FBSDE($\delta$) the class of FBSDEs defined above. We say that FBSDE($\delta$) is uniquely solvable if the equation \eqref{eq:FBSDE_delta} admits a unique solution for any $b^0, f^0 \in \mathbb{H}^2(\R^n), \sigma^0 \in \mathbb{H}^2(\R^{2(n \times d)})$, and $g^0 \in L^2(\Omega, \mathcal{F}_T, \mathbb{P};\R)$.
Note that FBSDE(1) corresponds exactly to \eqref{eq:conditional_MKVFBSDE}. The following lemma is essential to connect the solvability of FBSDE(0) and FBSDE(1).

\begin{lemma}\label{lemma:FBSDE(delta)_is_solvable}
Let Assumption \ref{assump:well-posedness_conditional_MKVFBSDE} hold. If FBSDE($\delta_0$) is uniquely solvable, then there exists a constant $\eta >0$, depending only on $L_f, C_f$ in Assumption~\ref{assump:well-posedness_conditional_MKVFBSDE}, such that FBSDE($\delta$) is uniquely solvable for any $\delta \in [\delta_0, \delta_0 + \eta]$.
\end{lemma}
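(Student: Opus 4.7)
The plan is to realize FBSDE$(\delta_0 + \eta)$ as a fixed-point equation built from the hypothesized solvability of FBSDE$(\delta_0)$. For each $(x, y, z) \in \mathbb{S}^2(\mathbb{F};\R^n) \times \mathbb{S}^2(\mathbb{F};\R^n) \times \mathbb{H}^2(\mathbb{F};\R^{2(n \times d)})$ with $\theta := (x, y, z)$, define $\Phi(x, y, z) := (X, Y, Z)$ as the unique solution to FBSDE$(\delta_0)$ obtained from the data $(b^0, \sigma^0, f^0, g^0)$ after augmenting them by the additive perturbations
\begin{align*}
\tilde{b}^0_s &:= \eta\, \bigl(B_s(\theta_s, \mathcal{L}^1(\theta_s)) + y_s\bigr), \qquad
\tilde{\sigma}^0_s := \eta\, \bigl(\Sigma_s(\theta_s, \mathcal{L}^1(\theta_s)) + z_s\bigr), \\
\tilde{f}^0_s &:= \eta\, \bigl(F_s(\theta_s, \mathcal{L}^1(\theta_s)) - x_s\bigr), \qquad
\tilde{g}^0 := \eta\, \bigl(G(x_T, \mathcal{L}^1(x_T)) - x_T\bigr).
\end{align*}
Lipschitz continuity of the coefficients ensures that these augmented inputs belong to the admissible data class for FBSDE$(\delta_0)$, and a direct bookkeeping check confirms that at a fixed point one has, for example, $\delta_0 B_s(\Theta_s, \mathcal{L}^1(\Theta_s)) - (1-\delta_0) Y_s + \tilde{b}^0_s = (\delta_0 + \eta) B_s(\Theta_s, \mathcal{L}^1(\Theta_s)) - (1-\delta_0 - \eta) Y_s$, and similarly for the other three coefficients. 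Thus any fixed point of $\Phi$ solves FBSDE$(\delta_0 + \eta)$, and the proof reduces to showing that $\Phi$ is a strict contraction for $\eta$ small enough, uniformly in $(b^0, \sigma^0, f^0, g^0)$.

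To establish contractivity, I would take two inputs $(x, y, z), (x', y', z')$ with images $(X, Y, Z), (X', Y', Z')$, denote all differences by $\Delta$, and apply It\^o's formula to $\Delta X_t \cdot \Delta Y_t$ exactly as in the uniqueness proof of Theorem~\ref{thm:wellposedness_conditional_MKVFBSDE}. The monotonicity assumed in Assumption~\ref{assump:well-posedness_conditional_MKVFBSDE}\ref{assump:monotonicity_for_well_posedness}, together with the terms $-(1-\delta_0)(|\Delta X|^2 + |\Delta Y|^2 + |\Delta Z|^2)$ produced automatically by the monotone augmentation defining $(B^{\delta_0}, \Sigma^{\delta_0}, F^{\delta_0}, G^{\delta_0})$, yields the master inequality
\[
\min(C_f, 1)\, \mathbb{E}\Big[|\Delta X_T|^2 + \int_0^T |\Delta X_t|^2\, dt\Big] + (1 - \delta_0)\, \mathbb{E}\Big[\int_0^T \bigl(|\Delta Y_t|^2 + |\Delta Z_t|^2\bigr)\, dt\Big] \leq R(\eta),
\]
where $R(\eta)$ is the sum of the cross-integrals pairing $(\Delta X, \Delta Y, \Delta Z)$ against $(\Delta \tilde{b}^0, \Delta \tilde{\sigma}^0, \Delta \tilde{f}^0, \Delta \tilde{g}^0)$. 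Using the Lipschitz bound on $(B, \Sigma, F, G)$, the elementary estimate $\mathcal{W}_2^2(\mathcal{L}^1(\theta_t), \mathcal{L}^1(\theta_t')) \leq \mathbb{E}^1[|\Delta \theta_t|^2]$, and $\varepsilon$-Young, I would split
\[
R(\eta) \leq \varepsilon C\, \mathbb{E}\Big[|\Delta X_T|^2 + \int_0^T \bigl(|\Delta X|^2 + |\Delta Y|^2 + |\Delta Z|^2\bigr) dt\Big] + \frac{C \eta^2}{\varepsilon}\, \mathbb{E}\Big[|\Delta x_T|^2 + \int_0^T \bigl(|\Delta x|^2 + |\Delta y|^2 + |\Delta z|^2\bigr) dt\Big].
\]

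The main obstacle is the boundary case $\delta_0 = 1$, where the factor $(1 - \delta_0)$ on the left vanishes, so the monotonicity estimate alone cannot absorb the $\varepsilon(|\Delta Y|^2 + |\Delta Z|^2)$ appearing in $R(\eta)$. To handle this uniformly in $\delta_0 \in [0, 1]$, I would invoke the standard a priori estimate for the linear BSDE satisfied by $(\Delta Y, \Delta Z)$, whose driver is Lipschitz in $(Y, Z)$ with constant at most $L_f$ and whose source involves only $\Delta X$, the Wasserstein distance, and the perturbations $\Delta \tilde{f}^0, \Delta \tilde{g}^0$. This yields
\[
\mathbb{E}\Big[\int_0^T \bigl(|\Delta Y|^2 + |\Delta Z|^2\bigr) dt\Big] \leq K\, \mathbb{E}\Big[|\Delta X_T|^2 + \int_0^T |\Delta X|^2 dt\Big] + K \eta^2\, \mathbb{E}\Big[|\Delta x_T|^2 + \int_0^T \bigl(|\Delta x|^2 + |\Delta y|^2 + |\Delta z|^2\bigr) dt\Big],
\]
for some $K = K(L_f, T)$ independent of $\delta_0$. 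Substituting this back into the master inequality and choosing $\varepsilon$ small enough in terms of $C_f, L_f, K$, the $|\Delta Y|^2$ and $|\Delta Z|^2$ terms on the right are absorbed by the left, producing
\[
\mathbb{E}\Big[|\Delta X_T|^2 + \int_0^T \bigl(|\Delta X|^2 + |\Delta Y|^2 + |\Delta Z|^2\bigr) dt\Big] \leq C^{*} \eta^2\, \mathbb{E}\Big[|\Delta x_T|^2 + \int_0^T \bigl(|\Delta x|^2 + |\Delta y|^2 + |\Delta z|^2\bigr) dt\Big],
\]
with $C^{*}$ depending only on $L_f$, $C_f$, and $T$. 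Choosing $\eta$ so that $C^{*} \eta^2 < 1$ makes $\Phi$ a strict contraction on $\mathbb{S}^2(\mathbb{F};\R^n) \times \mathbb{S}^2(\mathbb{F};\R^n) \times \mathbb{H}^2(\mathbb{F};\R^{2(n \times d)})$ (which is stable under $\Phi$ by standard SDE and BSDE moment bounds), and Banach's fixed point theorem delivers the required unique solution to FBSDE$(\delta_0 + \eta)$.
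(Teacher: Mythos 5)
Your proposal is correct and follows essentially the same route as the paper: the same fixed-point map built on FBSDE$(\delta_0)$ with the $\eta$-weighted augmented data, the same It\^o computation on $\Delta X_t\cdot\Delta Y_t$ exploiting the monotonicity of the interpolated coefficients (with constant $C_{\delta_0}\ge\min(C_f,1)$), and the same fallback on a standard a priori BSDE estimate for $(\Delta Y,\Delta Z)$ (the paper implements it with an $e^{\kappa t}$ weight) to absorb the $|\Delta Y|^2+|\Delta Z|^2$ terms uniformly in $\delta_0$. Your explicit identification of why the $(1-\delta_0)$ coefficient cannot be used near $\delta_0=1$ is a point the paper leaves implicit, but the resolution you give is exactly the paper's.
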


With this lemma, we can now prove Theorem~\ref{thm:wellposedness_conditional_MKVFBSDE}.

\begin{proof}[Proof of Theorem~\ref{thm:wellposedness_conditional_MKVFBSDE}]
It is known that FBSDE(0) is solvable (see \cite[Lemma 8.4.3]{zhang2017backward}). Since the step size $\eta$ depends only on the structural constants and is independent of the initial point $\delta_0$, we can iteratively apply Lemma~\ref{lemma:FBSDE(delta)_is_solvable} starting from $\delta=0$ to cover the interval $[0,1]$ in finitely many steps. This implies that FBSDE(1) is solvable.
\end{proof}

Now, let us prove Lemma~\ref{lemma:FBSDE(delta)_is_solvable}. We adapt the argument of \cite[Appendix~A]{jackson2024quantitative} to our setting, where the coefficients depend on the law $\mathcal{L}^1(\Theta_s)$, not only its first marginal $\mathcal{L}^1(X_s)$.
\begin{proof}[Proof of Lemma~\ref{lemma:FBSDE(delta)_is_solvable}]
For any \( \delta \in [\delta_0, \delta_0 + \eta] \) where \( \eta > 0 \) is to be determined later, denote \( \varepsilon := \delta - \delta_0 \). For arbitrary $b^0, f^0 \in \mathbb{H}^2(\R^n), \sigma^0 \in \mathbb{H}^2(\R^{2(n \times d)})$, and $g^0 \in L^2(\Omega, \mathcal{F}_T, \mathbb{P};\R)$, consider the mapping 
$
\Psi : \mathbb{H}^2(\R^n) \times \mathbb{H}^2(\R^n) \times \mathbb{H}^2(\R^{2 \times (n \times d)}) 
\to \mathbb{H}^2(\R^n) \times \mathbb{H}^2(\R^n) \times \mathbb{H}^2(\R^{2 \times (n \times d)})
$
such that
$
\Psi(x, y, z) = (X, Y, Z)
$
with  $(X, Y, Z) $ satisfying the following FBSDE:
\begin{align*}
\left\{
    \begin{aligned}
        X_t &= X_0 + \int_0^t B_s^{\delta_0}(\Theta_s, \mathcal{L}^1(\Theta_s)) + \varepsilon \left[ y_s + B_s(\vartheta_s, \mathcal{L}^1(\vartheta_s)) \right] + b^0_s \,  ds \\
         &\quad + \int_0^t \Sigma_s^{\delta_0}(\Theta_s, \mathcal{L}^1(\Theta_s)) + \varepsilon \left[ z_s + \Sigma_s(\vartheta_s, \mathcal{L}^1(\vartheta_s)) \right] + \sigma^0_s \, d\overline{W}_s, \\
        Y_t &= G^{\delta_0}(X_T, \mathcal{L}^1(X_T)) + \varepsilon \left[ -x_T + G(x_T, \mathcal{L}^1(x_T)) \right] + g^0 \\
        &\quad + \int_t^T F_s^{\delta_0}(\Theta_s, \mathcal{L}^1(\Theta_s)) + \varepsilon \left[ -x_s + F_s(\vartheta_s, \mathcal{L}^1(\vartheta_s)) \right] ds + f^0_s \, ds \\
        &\quad - \int_t^T Z_s d\overline{W}_s.
    \end{aligned}
\right.
\end{align*}
By our assumption, FBDSE($\delta_0$) is uniquely solvable and then, $\Phi$ is well-defined.
It suffices to prove that $\Psi$ is a contraction mapping for a suitable choice of $\eta$. 
Let $(x, y, z), (x', y', z') \in \mathbb{H}^2(\R^n) \times \mathbb{H}^2(\R^n) \times \mathbb{H}^2(\R^{2 \times (n \times d)})$ 
and $(X, Y, Z) = \Psi(x, y, z)$ and  $(X', Y', Z') = \Psi(x', y', z')$. Denote $\Delta \vartheta := \vartheta - \vartheta', \Delta \Theta := \Theta - \Theta'$.
Then,
\begin{align*}
    d \Delta X_t \cdot \Delta Y_t 
    &= \Delta X_t \cdot \bigg[ 
        - \left( F_t^{\delta_0}(\Theta_t, \mathcal{L}^1(\Theta_t)) - F_t^{\delta_0}(\Theta_t', \mathcal{L}^1(\Theta_t')) \right) \\
    &\qquad\qquad\quad 
        + \varepsilon \left( \Delta x_t - F_t(\vartheta_t, \mathcal{L}^1(\vartheta_t)) + F_t(\vartheta_t', \mathcal{L}^1(\vartheta_t')) \right) 
    \bigg] dt \\
    &\quad + \Delta Y_t \cdot \bigg[ 
        B_t^{\delta_0}(\Theta_t,  \mathcal{L}^1(\Theta_t)) - B_t^{\delta_0}(\Theta_t',  \mathcal{L}^1(\Theta_t')) \\
    &\qquad\qquad\quad
        + \varepsilon \left( \Delta y_t + B_t(\vartheta_t, \mathcal{L}^1(\vartheta_t)) - B_t(\vartheta_t', \mathcal{L}^1(\vartheta_t')) \right) 
    \bigg] dt \\
    &\quad + \Delta Z_t \cdot \bigg[ 
        \Sigma_t^{\delta_0}(\Theta_t, \mathcal{L}^1(\Theta_t)) - \Sigma_t^{\delta_0}(\Theta_t', \mathcal{L}^1(\Theta_t')) \\
    &\qquad\qquad\quad
        + \varepsilon \left( \Delta z_t + \Sigma_t(\vartheta_t, \mathcal{L}^1(\vartheta_t)) - \Sigma_t(\vartheta_t', \mathcal{L}^1(\vartheta_t')) \right) 
    \bigg] dt + dM_t    
\end{align*}
for a martingale $M$.
Since the original coefficients $B, F, \Sigma$, and $G$ satisfy Assumption~\ref{assump:well-posedness_conditional_MKVFBSDE}, the interpolated coefficients $B^{\delta}, F^{\delta}, \Sigma^{\delta}$, and $G^{\delta}$ naturally inherit these properties. 
Consequently, they also fulfill Assumption \ref{assump:well-posedness_conditional_MKVFBSDE}, with the constant $C_f$ replaced by
\begin{align}
\label{eq:new_constant_for_displacement_monotonicity}
C_{\delta} := \delta C_f + 1 - \delta \geq \min(c_f, 1) =: \bar{C}_f.      
\end{align}

Since $\Delta X_0 = 0$, we have
\begin{equation*}
    \begin{split}
        \mathbb{E}[\Delta X_T \cdot \Delta Y_T] 
        + C_\delta \mathbb{E} \left[ \int_0^T |\Delta X_t|^2 \, dt \right] 
        &\leq \varepsilon \mathbb{E} \Bigg[ \int_0^T 
            - \Delta X_t \cdot \left( F_t(\vartheta_t, \mathcal{L}^1(\vartheta_t)) - F_t(\vartheta_t', \mathcal{L}^1(\vartheta_t')) \right) \\
        &\qquad \qquad  + \Delta Y_t \cdot \left( B_t(\vartheta_t, \mathcal{L}^1(\vartheta_t)) - B_t(\vartheta_t', \mathcal{L}^1(\vartheta_t')) \right) \\
        &\qquad \qquad  + \Delta Z_t \cdot \left( \Sigma_t(\vartheta_t, \mathcal{L}^1(\vartheta_t)) - \Sigma_t(\vartheta_t', \mathcal{L}^1(\vartheta_t')) \right) \\
        &\qquad \qquad  + \Delta X_t \cdot \Delta x_t + \Delta Y_t \cdot \Delta y_t + \Delta Z_t \cdot \Delta z_t \, dt \Bigg].
    \end{split}
\end{equation*}
Applying Young's inequality and the Lipschitz continuity of the coefficients $B, F, \Sigma$, and $G$, we obtain
\begin{align*}
\mathbb{E}[\Delta X_T \cdot \Delta Y_T] 
+ (C_\delta - \varepsilon) \mathbb{E} \left[ \int_0^T |\Delta X_s|^2 \, ds \right] 
\leq \varepsilon C \mathbb{E} \left[ \int_0^T \left( |\Delta \Theta_s|^2 + |\Delta \vartheta_s|^2 \right) ds \right].
\end{align*}
for a generic constant $C > 0$
\footnote{In the proof, $C$ may vary line by line, but it depends only on $C_f, L_f, T$ and not on $\delta_0$.}.
Here, we have also utilized the estimate:
\begin{align*}
\mathbb{E}[\mathcal{W}_2^2(\mathcal{L}^1(\Theta_t), \mathcal{L}^1(\Theta_t'))] 
&=
\mathbb{E}^0\left[\mathcal{W}_2^2(\mathcal{L}^1(\Theta_t), \mathcal{L}^1(\Theta_t'))\right]\\
&\leq
\mathbb{E}^0\left[\mathbb{E}^1 \left[|\Theta_t- \Theta_t'|^2\right] \right]\\
&\leq
\mathbb{E} \left[|\Theta_t- \Theta_t'|^2\right].
\end{align*}

Also, utilizing the fact that $G^{\delta_0}$ fulfills Assumption \ref{assump:well-posedness_conditional_MKVFBSDE} (with the constant $C_f$ substituted by $C_{\delta^0}$), we derive the lower bound:
\begin{equation*}
    \begin{split}
        \mathbb{E}[\Delta X_T \cdot \Delta Y_T] 
        &\geq C_\delta \mathbb{E}[|\Delta X_T|^2] \\
        &\quad + \varepsilon \mathbb{E} \left[ 
            - \Delta X_T \cdot \Delta x_T 
            + \Delta X_T \cdot \left( 
                G(x_T, \mathcal{L}^1(x_T)) 
                - G(x_T', \mathcal{L}^1(x_T')) 
            \right) 
        \right].
    \end{split}
\end{equation*}
Combining these two estimates thus yields
\begin{equation}
\label{eq:estimate_X_wellposedness_MKVFBSDE}
    \begin{split}
        &(C_\delta - \varepsilon)\, \mathbb{E}[|\Delta X_T|^2] 
        + (C_\delta - \varepsilon)\, \mathbb{E} \left[ \int_0^T |\Delta X_s|^2 \, ds \right] \\
        &\quad \leq \varepsilon C \mathbb{E} \left[ \int_0^T \left( |\Delta \Theta_s|^2 + |\Delta \vartheta_s|^2 \right) ds \right]
        + \varepsilon C \mathbb{E}[|\Delta x_T|^2]. 
    \end{split}
\end{equation}
Note that the $L_f$-Lipschitz property of $B, F, \Sigma$, and $G$, combined with $\delta_0 \leq 1$, implies that the interpolated functions $B^{\delta_0}, F^{\delta_0}, \Sigma^{\delta_0}$, and $G^{\delta_0}$ are $(L_f + 2)$-Lipschitz continuous. 
For notational simplicity, we shall continue to denote this generic Lipschitz constant by $L_f$. 
Now, applying It\^o's formula to the process $e^{\kappa t} |\Delta Y_t|^2$ with a parameter $\kappa > 0$ to be chosen later, we obtain
\begin{equation*}
    \begin{split}
        e^{\kappa t} \mathbb{E}[|\Delta Y_t|^2] 
        &+ \mathbb{E} \left[ \int_t^T e^{\kappa s} |\Delta Z_s|^2 ds \right] \\
        &\leq C e^{\kappa T} \mathbb{E}[|\Delta X_T|^2] + \varepsilon^2 C e^{\kappa T} \mathbb{E}[|\Delta x_T|^2] \\
        &\quad + \mathbb{E} \Bigg[ \int_t^T e^{\kappa s} \left( \Bigg( \frac{4L_f^2 + 1}{a} - \kappa \right) |\Delta Y_s|^2 
        + 2a \varepsilon e^{\kappa s} (|\Delta X_s|^2 + |\Delta Z_s|^2) + 3a \varepsilon e^{\kappa s} |\Delta \vartheta_s|^2 \Bigg) ds \Bigg],
    \end{split}
\end{equation*}
for any $a > 0$.
Taking $a$ sufficiently small and $\kappa$ large, we have
\begin{equation*}
    \begin{split}
        \mathbb{E} \left[ \int_0^T |\Delta Y_t|^2 \, dt \right] 
        + \mathbb{E} \left[ \int_0^T |\Delta Z_t|^2 \, dt \right] 
        &\leq C \mathbb{E}[|\Delta X_T|^2] 
        + \varepsilon^2 C \mathbb{E}[|\Delta x_T|^2] \\
        &\quad + a C \mathbb{E} \left[ \int_0^T |\Delta X_t|^2 \, dt \right] 
        + a C \mathbb{E} \left[ \int_0^T |\Delta \vartheta_t|^2 \, dt \right].
    \end{split}
\end{equation*}
Plug this into \eqref{eq:estimate_X_wellposedness_MKVFBSDE}, we get
\begin{equation*}
    \begin{split}
        (C_{\delta_0} - \varepsilon) \mathbb{E}[|\Delta X_T|^2] 
        &+ (C_{\delta_0} - \varepsilon) \mathbb{E} \left[ \int_0^T |\Delta X_s|^2 ds \right] 
        + \mathbb{E} \left[ \int_0^T \left( |\Delta Y_s|^2 + |\Delta Z_s|^2 \right) ds \right] \\
        &\leq (\varepsilon + a) C \mathbb{E} \left[ \int_0^T |\Delta \Theta_t|^2 dt \right] 
        + \varepsilon C \mathbb{E} \left[ \int_0^T |\Delta \vartheta_t|^2 dt \right] \\
        &\quad + C \mathbb{E}[|\Delta X_T|^2] 
        + \varepsilon C \mathbb{E}[|\Delta x_T|^2].
    \end{split}
\end{equation*}
Combining the estimate for $\mathbb{E}[|\Delta X_T|^2]$ provided by \eqref{eq:estimate_X_wellposedness_MKVFBSDE} with the property that $C_{\delta_0} \geq \bar{C}_f$ (see \eqref{eq:new_constant_for_displacement_monotonicity}), we deduce:
\begin{equation*}
    \begin{split}
        (\bar{C}_f - \varepsilon) \mathbb{E}[|\Delta X_T|^2] 
        &+ \bigg[ (\bar{C}_f - \varepsilon) - (2\varepsilon + a) C \bigg] 
        \mathbb{E} \left[ \int_0^T |\Delta X_t|^2 \, dt \right] \\
        &\quad + \bigg[ 1 - (2\varepsilon + a) C \bigg] 
        \mathbb{E} \left[ \int_0^T \left( |\Delta Y_t|^2 + |\Delta Z_t|^2 \right) \, dt \right] \\
        &\leq \varepsilon C \mathbb{E} \left[ \int_0^T |\Delta \vartheta_t|^2 \, dt \right] 
        + \varepsilon C \mathbb{E}[|\Delta x_T|^2].
    \end{split}
\end{equation*}
Given that $\bar{C}_f > 0$, we can choose parameters $a$ and $\varepsilon$ sufficiently small to ensure positivity of the coefficients, yielding the estimate:
\begin{align*}
\mathbb{E}[|\Delta X_T|^2] 
&+ \mathbb{E} \left[ \int_0^T |\Delta \Theta_s|^2 \, ds \right] 
\leq \varepsilon C \left\{ 
\mathbb{E} \left[ \int_0^T |\Delta \vartheta_t|^2 \, dt \right] 
+ \mathbb{E}[|\Delta x_T|^2] 
\right\}.
\end{align*}
This inequality demonstrates that $\Psi$ is a contraction mapping provided $\varepsilon$ is small enough. 
Consequently, we may define $\eta$ as the maximum possible value for $\varepsilon$. Importantly, since the constant $C$ is independent of $\delta_0$, the step size $\eta$ is also independent of $\delta_0$.
\end{proof}

Given this well-posedness result, we are ready to prove the unique existence of the MFE. 
\begin{proof}[Proof of Theorem \ref{thm:unique_existence_MFE}]
We use the same notation as \eqref{eq:from_b_to_B} and \eqref{eq:change_space_of_measure}.
To guarantee the unique existence of the solution to (\ref{eq:mean_field_MKVFBSDE}), we first need to check that the coefficients $B, \Sigma, \Sigma^0$, and $F$ are Lipschitz continuous uniformly in $t$. In fact, since we already know that $b,\sigma,\sigma^0, \partial_{x}H$ and $\partial_{x}g$ are Lipschitz by assumption, we only need to show that
\begin{equation}
\label{eq:wasserstein_estimate_for_wellposeness}
\mathcal{W}_2 (\varphi_t( \xi), \varphi_t( \xi'))
\leq 
C ( \mathcal{W}_2 (\xi, \xi') +   \mathcal{W}_2 (\mu, \mu')),
\end{equation}
for all $\xi, \xi' \in \mathcal{P}_2(\R^n \times \R^n \times \R^{n\times d} \times \R^{n\times d} )$.
To prove this, the argument in the proof of \cite[Theorem~1]{lauriere2022convergence} extends to our setting without difficulty.
We begin by applying the Kantorovich duality theorem, which yields the following representation:
\begin{align*}
&\mathcal{W}_2^2(\varphi_t( \xi), \varphi_t( \xi')) \\
&= \sup \left( \int_{\R^n \times \R^{\ell}} h_1(x, a) \varphi_t( \xi)(dx, da)
- \int_{\R^k \times \R^m} h_2(x', a') \varphi_t( \xi')(dx', da') \right) \\
&= \sup \Big( \int_{\R^n\times \R^n \times \R^{n\times d} \times \R^{n\times d}} 
        h_1(\operatorname{id}_x(\theta), \Lambda_t( \theta, \mu, 0)) \xi(d\theta)  \\
    &\qquad \qquad - \int_{\R^n\times \R^n \times \R^{n\times d} \times \R^{n\times d}} 
        h_2(\operatorname{id}_x(\theta'), \Lambda_t( \theta', \mu',0)) \xi'(d\theta') \Big), 
\end{align*}
Here, the supremum is taken over the class of bounded continuous functions $ h_1, h_2 : \R^n \times \R^{\ell} \to \R $ subject to the constraint $ h_1(x, a) - h_2(x', a') \leq |x - x'|^2 + |a - a'|^2 $ for all $ (x, a), (x', a') \in \R^n \times \R^{\ell} $.
Given that $ \Lambda $ is Lipschitz continuous, it follows that,
\begin{align*}
h_1(x, \Lambda_t(\theta, \mu,0)) - h_2(x', \Lambda_t(\theta', \mu',0)) 
&\leq |x - x'|^2 + |\Lambda_t(\theta, \mu)) - \Lambda_t( \theta', \mu')|^2 \\
&\leq C \left( |\theta - \theta'|^2  + \mathcal{W}_2^2(\mu, \mu') \right).
\end{align*}
This yields the estimate
\begin{align*}
\mathcal{W}_2^2(\varphi_t( \xi), \varphi_t( \xi')) 
&\leq \sup \Big( \int_{\R^n\times \R^n \times \R^{n\times d} \times \R^{n\times d}}
    \tilde{h}_1(\theta) \xi(d\theta) 
    - \int_{\R^n\times \R^n \times \R^{n\times d} \times \R^{n\times d}} 
    \tilde{h}_2(\theta')\xi'(d\theta')\Big),
\end{align*}
where the supremum is taken over all functions $ \tilde{h}_1, \tilde{h}_2 $ subject to 
\[ \tilde{h}_1(\theta) - \tilde{h}_2(\theta') \leq C \left( |\theta - \theta'|^2 + \mathcal{W}_2^2(\mu, \mu') \right).\]
Take an arbitrary transport plan $\Pi \in  \mathcal{P}_2((
\R^n\times \R^n\times \R^{n\times d}\times \R^{n\times d })^2)$ with its first marginal $\xi$ and the second one $\xi'$.
Again, using the duality representation of the Wasserstein distance,
\begin{align*}
\mathcal{W}_2^2(\varphi_t( \xi), \varphi_t( \xi')) 
&\leq \sup \Big( \int_{(\R^n\times \R^n \times \R^{n\times d} \times \R^{n\times d})^2}
\tilde{h}_1(\theta)  
-\tilde{h}_2(\theta') \,
d\Pi \Big)\\
&\leq C  \int_{(\R^n\times \R^n \times \R^{n\times d} \times \R^{n\times d})^2}
|\theta - \theta'|^2   \,
d\Pi  
+ C\mathcal{W}_2^2(\mu, \mu') .
\end{align*}
Since the choice of coupling $\Pi$ is arbitrary, minimizing the right-hand side over all admissible $\Pi$ gives
\begin{align*}
\mathcal{W}_2^2(\varphi_t( \xi), \varphi_t( \xi')) 
&\leq C \inf \int_{(\R^n\times \R^n \times \R^{n\times d} \times \R^{n\times d})^2}
|\theta - \theta'|^2   \,
d\Pi      + C\mathcal{W}_2^2(\mu, \mu') \\
&= C(\mathcal{W}_2(\xi, \xi') + \mathcal{W}_2^2(\mu, \mu') ) . 
\end{align*}
Consequently, we conclude that $ B,\Sigma, \Sigma^0, F $, and $ G $ are Lipschitz continuous.

Regarding the integrability condition of $(\mathcal{L}^1 (X_t, \alpha_t))_{t\in[0,T]}$, it is clear that the solution $\Theta$ of the conditional MKV-FBSDE \eqref{eq:mean_field_MKVFBSDE} satisfies
\begin{align*}
\E \left[
\int^T_0 M^2_2(\mathcal{L}^1 (X_t, \alpha_t)) \, dt 
\right] 
=
\E^0  \left[ \int^T_0 \E^1 \left[
 |X_t|^2+  |\alpha_t|^2
\right] 
 \, dt 
\right]
< \infty,  
\end{align*}
for $\alpha_t := \Lambda_t (\Theta_t, \mu, 0)$,
the proof is complete.
\end{proof}

\section{Construction of conditionally i.i.d.\ copies of solutions to the McKean--Vlasov FBSDE}
\label{sec:coupling_technique}
In this section, we construct a conditionally independent sequence of solutions \(\Theta^i = (X^i, Y^i, Z^i, Z^{0,i})\) to the following conditional MKV-FBSDEs on \((\Omega^N, \mathcal{F}^N, \mathbb{P}^N)\).
Each \(\Theta^i\) is identically distributed to the solution \(\Theta = (X, Y, Z, Z^0)\) defined on \((\Omega, \mathcal{F}, \mathbb{P})\).
\begin{equation}
\label{eq:conditional_MKVFBSDE_for_i}
\left\{
\begin{aligned}
&d X_t^i = B_t\left( \Theta^i_t, \mathcal{L}^1 (\Theta_t^i)\right)\, dt 
+ \Sigma_t \left( \Theta^i_t,  \mathcal{L}^1 (\Theta_t^i)\right) \, dW^i_t 
+ \Sigma^0_t \left( \Theta^i_t,  \mathcal{L}^1 (\Theta_t^i)\right) \, dW^0_t ,
\\
&dY_t^i =  -F_t\left( \Theta^i_t,  \mathcal{L}^1 (\Theta_t^i)\right) \, dt 
+ Z_t^i \, dW^i_t + Z^{0,i}_t \, dW^0_t ,
\\
& X_0^i  \sim \mu_0, \hspace{1cm} Y_T^i = G\left(X_T^i, \mathcal{L}^1 (X^i_T)\right),
\end{aligned}
\right.
\end{equation}
where we recall \eqref{eq:from_b_to_B}
\begin{equation}
\begin{aligned}
B_t(\theta,\xi)
 &= b_t\big(x, \Lambda_t(\theta, \mu, 0), \varphi_t(\xi)\big), \\
\Sigma_t(\theta,\xi)
 &= \sigma_t \big(x, \Lambda_t(\theta, \mu, 0), \varphi_t(\xi)\big), \\
\Sigma^0_t(\theta,\xi)
 &= \sigma^0_t\big( x, \Lambda_t(\theta, \mu, 0), \varphi_t(\xi)\big), \\
F_t(\theta,\xi)
 &= \partial_x H_t\big(x, \Lambda_t(\theta, \mu, 0), \varphi_t(\xi)\big), \\
G(x,\mu)
 &= \partial_x g(x,\mu).
\end{aligned}
\end{equation}
for a function $\varphi : [0,T] \times \mathcal{P}_2(
\R^n\times \R^n\times \R^{n\times d}\times \R^{n\times d } 
)
\to \mathcal{P}_2 (\R^n \times \R^{\ell} )$
\begin{equation}
\label{eq:change_space_measure}
\varphi_t(\xi) := \xi \circ \left( \mathrm{id}_{x}, \Lambda_t(\cdot, \cdot, \cdot, \cdot, \mu, 0) \right)^{-1},
\end{equation}
where $\mathrm{id}_{x}$ denotes the projection $(x,y,z,z^0)\mapsto x$. 
From \eqref{eq:wasserstein_estimate_for_wellposeness}, the map \(\xi \mapsto \varphi_t( \xi)\) is continuous and, consequently, Borel measurable.

Suppose that for \(i = 1\), the above FBSDE admits a solution \(\Theta\) which is \(\mathbb{F}\)-progressively measurable on \((\Omega, \mathcal{F}, \mathbb{P})\).  
Then, by \cite[Proposition~1.2.1~(i)]{zhang2017backward}, there exists a \(\big(\sigma(W^0_s, X_0, W_s \, ; \, 0\leq s \leq t) \big)_{t \in [0, T]}\)-progressively measurable version of \(\Theta\), which we again denote by \(\Theta\).  
From now on in this section, we will work with this version of \(\Theta\).
Thus, for each \(t\), by the measurability of \(\Theta_t\) with respect to \(\sigma(W^0_s, X_0, W_s \, ; \, 0\leq s \leq t)\), we have
\[
\Theta_t (\omega^0, x, \omega) = \Theta_t (W^0, X_0, W) (\omega^0, x, \omega) = \Theta_t (W_{\cdot \wedge t}^0, X_0, W_{\cdot \wedge t}) (\omega^0, x, \omega) = \Theta_t(\omega_{\cdot \wedge t}^0, x, \omega_{\cdot \wedge t}),
\]
for any \((\omega^0, x, \omega) \in \mathcal{C} \times \R^n \times \mathcal{C}\).

Furthermore, since \(\Theta\) is \(\big(\sigma(W^0_s, X_0, W_s \, ; \, 0\leq s \leq t) \big)_{t \in [0, T]}\)-progressively measurable, the function
\[
(t, \omega^0, x, \omega) \mapsto \Theta_t(\omega^0, x, \omega)
\]
is \(\mathcal{B}([0,T]) \otimes \sigma(W^0, X_0, W)\)-measurable, and hence Borel measurable on \([0,T] \times \mathcal{C} \times \R^n \times \mathcal{C}\).

Now, we define a process \(\Theta^i\) on \((\Omega^N, \mathcal{F}^N, \mathbb{P}^N)\) by
\[
\Theta_t^i(\omega^0, \mathbf{x}, \boldsymbol{\omega}) := \Theta_t (W^0, X_0^i, W^i)(\omega^0, \mathbf{x}, \boldsymbol{\omega}) = \Theta_t (W_{\cdot \wedge t}^0, X_0^i, W_{\cdot \wedge t}^i)(\omega^0, \mathbf{x}, \boldsymbol{\omega}),
\]
where we regard \(\Theta\) as a function from \(\mathcal{C} \times \R^n \times \mathcal{C}\) to \(\R^n \times \R^n \times \R^{n\times d} \times \R^{n\times d}\).
It is clear that \(\Theta^i\) is \(\mathbb{F}^N\)- progressively measurable. 


Furthermore, since \(\Theta \in \mathbb{H}^2(\mathbb{F};\R^n) \times \mathbb{H}^2(\mathbb{F};\R^n) \times \mathbb{H}^2(\mathbb{F};\R^{n \times d}) \times \mathbb{H}^2(\mathbb{F};\R^{n \times d})\), it follows that \(\Theta^i \in \mathbb{H}^2(\mathbb{F}^N;\R^n) \times \mathbb{H}^2(\mathbb{F}^N;\R^n) \times \mathbb{H}^2(\mathbb{F}^N;\R^{n \times d}) \times \mathbb{H}^2(\mathbb{F}^N;\R^{n \times d})\).

By this construction and Lemma~\ref{lemma:process_conditional_dist}, the sequence of processes \((\Theta^i)_{i=1,\ldots,N}\) is conditionally i.i.d.\ in the sense defined in Section~\ref{sec:main_theorem}, with the common conditional distribution given by \(\mathcal{L}^1(\Theta_t)\). In addition, the distribution of $\Theta^i : \Omega^N \to L^2([0,T])$ is identical for all $i=1,\ldots,N$.

In the subsequent lemmas, we show that each \(\Theta^i\) is a solution to equation~\eqref{eq:conditional_MKVFBSDE_for_i} for \(i = 1, \ldots, N\).  
We first state the following lemma, which will be used in the sequel. The proof is inspired by \cite[Lemma~10.1, p.~125]{rogers1987processes}.
\begin{lemma}
\label{lemma:rep_of_stoch_int}
Let $\varphi \in \mathbb{H}^2 (\mathbb{F}; \R^k)$.
Then, there exists a Borel measurable function $\Phi: \mathcal{C} \times \R^n \times  \mathcal{C} \to  C([0,T];\R^k)$, such that
\begin{equation}
\label{eq:existence_Phi}
\int^\cdot_0 \varphi_t \, dW_t = \Phi(W^0, X^0, W), \,\, \text{$\mathbb{P}$-a.s.},
\end{equation}
or more precisely, it holds that
\[
\int^t_0 \varphi_s(W^0, X_0, W)(w^0, x, w)\, dW_s = \Phi_t(W^0, X_0, W)(w^0, x, w), \,\, \text{for any $t\in[0,T]$}, 
\]
for $\mathbb{P}$-almost all $(\omega^0,x, \omega)\in \Omega$.

Moreover, on $(\Omega^N, \mathcal{F}^N, \mathbb{P}^N)$, it holds that 
\[
\int^\cdot_0 \varphi_t(W^0, X_0^i, W^i) \, dW^i_t = \Phi(W^0, X_0^i, W^i), \,\, \text{$\mathbb{P}^N$-a.s.},
\]
\end{lemma}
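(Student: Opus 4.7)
The proof will be carried out in three steps: first construct the Borel functional $\Phi$ on simple integrands, then extend to general $\varphi\in\mathbb{H}^2(\mathbb{F};\R^k)$ by an $L^2$-approximation/subsequence argument, and finally transfer the identity from $(\Omega,\mathcal{F},\mathbb{P})$ to $(\Omega^N,\mathcal{F}^N,\mathbb{P}^N)$ using that the law of $(W^0,X_0^i,W^i)$ on the product space coincides with the law of $(W^0,X_0,W)$ on $\Omega$.

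Step 1 (simple integrands). Recall that by \cite[Proposition~1.2.1~(i)]{zhang2017backward} we may and do work with a version of $\varphi$ that is progressively measurable with respect to $(\sigma(W^0_s,X_0,W_s;\,0\le s\le t))_{t\in[0,T]}$. For a simple $\mathbb{F}$-adapted process
\[
\varphi^n_t=\sum_{j=0}^{m_n-1}\varphi^n_{t_j^n}\,\mathbf{1}_{(t_j^n,t_{j+1}^n]}(t),
\]
each coefficient $\varphi^n_{t_j^n}$ is measurable with respect to $\sigma(W^0_{\cdot\wedge t_j^n},X_0,W_{\cdot\wedge t_j^n})$, so there exists a Borel map $f^n_j\colon\mathcal{C}\times\R^n\times\mathcal{C}\to\R^k$ with $\varphi^n_{t_j^n}=f^n_j(W^0,X_0,W)$ depending only on the paths up to time $t_j^n$. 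The stochastic integral of $\varphi^n$ against $W$ is then the pathwise finite sum
\[
\int_0^t\varphi^n_s\,dW_s=\sum_{j=0}^{m_n-1}f^n_j(W^0,X_0,W)\bigl(W_{t\wedge t_{j+1}^n}-W_{t\wedge t_j^n}\bigr),
\]
which defines a Borel map $\Phi^n\colon\mathcal{C}\times\R^n\times\mathcal{C}\to C([0,T];\R^k)$ with $\int_0^\cdot\varphi^n_s\,dW_s=\Phi^n(W^0,X_0,W)$ identically.

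Step 2 (extension to $\mathbb{H}^2$). By density of simple processes in $\mathbb{H}^2(\mathbb{F};\R^k)$, choose $\varphi^n\to\varphi$ in $\mathbb{H}^2$. It\^o's isometry combined with the Burkholder--Davis--Gundy inequality gives $\int_0^\cdot\varphi^n_s\,dW_s\to\int_0^\cdot\varphi_s\,dW_s$ in $\mathbb{S}^2(\mathbb{F};\R^k)$; passing to a subsequence (which we relabel) we get $\mathbb{P}$-a.s. uniform convergence on $[0,T]$. Define
\[
\Phi(w^0,x,w):=\begin{cases}\displaystyle\lim_{n\to\infty}\Phi^n(w^0,x,w), & \text{if the limit exists in }C([0,T];\R^k),\\ 0, & \text{otherwise},\end{cases}
\]
which is Borel measurable as a pointwise limsup of Borel maps. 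By construction $\Phi(W^0,X_0,W)=\int_0^\cdot\varphi_s\,dW_s$ $\mathbb{P}$-a.s., proving \eqref{eq:existence_Phi}.

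Step 3 (transfer to $(\Omega^N,\mathcal{F}^N,\mathbb{P}^N)$). Fix $i\in\{1,\dots,N\}$. By the product structure, the $\R^n\times\mathcal{C}\times\mathcal{C}$-valued random element $(W^0,X_0^i,W^i)$ under $\mathbb{P}^N$ has exactly the same distribution as $(W^0,X_0,W)$ under $\mathbb{P}$. For each simple $\varphi^n$ from Step 1, the pathwise identity
\[
\int_0^\cdot\varphi^n_s(W^0,X_0^i,W^i)\,dW^i_s=\Phi^n(W^0,X_0^i,W^i)
\]
holds $\mathbb{P}^N$-a.s., since both sides are literally the same finite sum of Brownian increments weighted by the $f^n_j$'s. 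The identity of laws and It\^o's isometry on $(\Omega^N,\mathcal{F}^N,\mathbb{P}^N)$ give $\int_0^\cdot\varphi^n_s(W^0,X_0^i,W^i)\,dW^i_s\to\int_0^\cdot\varphi_s(W^0,X_0^i,W^i)\,dW^i_s$ in $\mathbb{S}^2(\mathbb{F}^N;\R^k)$, while $\Phi^n(W^0,X_0^i,W^i)\to\Phi(W^0,X_0^i,W^i)$ $\mathbb{P}^N$-a.s. (the a.s. convergence on $\Omega$ from Step 2 is preserved under a pushforward identity of laws). Matching limits yields the desired identity $\mathbb{P}^N$-a.s.

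The only delicate point is the measurability/uniform choice of the functional $\Phi$: the Borel candidate must be defined pointwise in $(w^0,x,w)$, independent of any probability space, so that its evaluation at $(W^0,X_0^i,W^i)$ on $\Omega^N$ makes sense. The limsup construction in Step 2 circumvents this by producing a concrete Borel representative that agrees a.s. with the stochastic integral on any probability space carrying a Brownian motion with the same law.
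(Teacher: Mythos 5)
Your proposal is correct and follows essentially the same route as the paper's proof: represent simple integrands by explicit Borel pathwise sums, pass to a general $\varphi\in\mathbb{H}^2$ via an approximating sequence with a.s.\ uniform convergence (along a subsequence) and a limit-or-zero definition of $\Phi$, and transfer to $(\Omega^N,\mathcal{F}^N,\mathbb{P}^N)$ using the equality of laws of $(W^0,X_0^i,W^i)$ and $(W^0,X_0,W)$ with the same approximating sequence. Your explicit mention of the subsequence extraction is in fact slightly more careful than the paper's wording.
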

\begin{proof}
First, suppose that \(\varphi\) is a simple predictable process; that is, there exists a finite partition \(0 = t_0 < t_1 < \dots < t_M = T\), and \(\mathcal{F}_{t_m}\)-measurable, bounded, \(\R^{k \times d}\)-valued random variables \(\varphi_m\) for \(m = 0, \ldots, M-1\), such that \(\varphi\) is of the form
\[
\varphi_t = \varphi_0 \1_{\{0\}}(t) + \sum_{m=1}^{M-1} \varphi_m \1_{(t_m, t_{m+1}]}(t).
\]
Then, we choose a version of each \(\varphi_m\) that is measurable with respect to the the raw filtration \(\sigma(W^0_s, X_0, W_s \, ; \, s \leq t_m)\).  Note that for this version of \(\varphi_m\), the map \((\omega^0, x, \omega) \mapsto \varphi_m(\omega^0, x, \omega)\) is Borel measurable on \(\mathcal{C} \times \R^n \times \mathcal{C}\).

It then follows from the definition of the stochastic integral that
\[
\int_0^\cdot \varphi_t \, dW_t 
= \sum_{m=0}^{M-1} \varphi_m(W^0_{\cdot \wedge t_m}, X_0, W_{\cdot \wedge t_m}) \big( W_{\cdot \wedge t_{m+1}} - W_{\cdot \wedge t_m} \big), \quad \mathbb{P}\text{-a.s.}
\]

Since for any \(t \in [0,T]\), the map \(\mathcal{C} \ni \omega \mapsto \omega_{\cdot \wedge t} \in \mathcal{C}\) is continuous with respect to the supremum norm on \(\mathcal{C}\), it is measurable with respect to the Borel \(\sigma\)-algebra on \(\mathcal{C}\).  
Therefore, together with the measurability of \(\varphi_m\), we deduce that there exists a measurable map \(\Phi\) satisfying \eqref{eq:existence_Phi}.

For a general process \(\varphi \in \mathbb{H}^2(\mathbb{F}; \R^k)\), there exists a sequence of simple predictable processes \((\varphi^m)_{m \in \mathbb{N}}\) such that
\[
\lim_{m \rightarrow \infty} \mathbb{E} \left[ \int_0^T \left| \varphi_t - \varphi_t^m \right|^2 dt \right] = 0.
\]
Then, for the sequence of functions \((\Phi^m)_{m \in \mathbb{N}}\), satisfying  
\[
\Phi_\cdot^m(W^0, X_0, W) = \int_0^\cdot \varphi_t^m \, dW_t, \quad \text{$\mathbb{P}$-a.s.} \quad \text{for all } m \in \mathbb{N},
\]
we define
\[
\Phi_t(W^0, X_0, W) :=     
\begin{cases}
    \lim_{m \rightarrow \infty} \Phi_t^m(W^0, X_0, W), & \text{if the limit exists},\\
    0, & \text{otherwise}.
\end{cases}
\]
Since the sequence \((\Phi_T^m(W^0, X_0, W))_{m \in \mathbb{N}}\) converges in \(L^2(\Omega, \mathcal{F}, \mathbb{P}; \R^k)\), the processes \(\Phi_t^m(W^0, X_0, W)\) converge to \(\Phi_t(W^0, X_0, W)\) uniformly in \(t \in [0,T]\), \(\mathbb{P}\)-a.s.  
Therefore, \(\Phi_{\cdot}(W^0, X_0, W) \in \mathcal{C}\), \(\mathbb{P}\)-a.s.  
It then follows from the definition of the stochastic integral that \eqref{eq:existence_Phi} holds.

The final statement also holds if we take the same approximating sequence.
\end{proof}

\begin{remark}
\label{rem:rep_also_holds_for_W0}
From the above proof, a similar result holds if we replace \(W\) in \eqref{eq:existence_Phi} by \(W^0\).  
This fact will also be used in the next lemma.
\end{remark}

We are finally ready to state the following coupling result. The proof is similar to \cite[Theorem 10.4, p.126]{rogers1987processes}.
\begin{lemma} 
\label{lemma:iid_copies_solves_MKVFBSDE}
Assume that the conditional MKV-FBSDE \eqref{eq:conditional_MKVFBSDE_for_i} admits a solution $\Theta  \in \mathbb{H}^2(\mathbb{F};\R^n) \times \mathbb{H}^2(\mathbb{F};\R^n) \times \mathbb{H}^2(\mathbb{F};\R^{n\times d}) \times \mathbb{H}^2(\mathbb{F};\R^{n\times d})$ for $i=1$ on the probability space $(\Omega, \mathcal{F}, \mathbb{P})$, and that Assumption \ref{assump:standard_assump} \ref{assump:differentiability},\ref{assump:Lipschitz} hold. 
Let $(\Theta^i)_{i=1,\ldots N}$ be the conditionally i.i.d.\ sequence of $\mathbb{F}^N$-progressively measurable version of the stochastic processes as constructed above. 

Then, for each \(i\), the process \(\Theta^i \in \mathbb{H}^2(\mathbb{F}^N; \R^n) \times \mathbb{H}^2(\mathbb{F}^N; \R^n) \times \mathbb{H}^2(\mathbb{F}^N; \R^{n \times d}) \times \mathbb{H}^2(\mathbb{F}^N; \R^{n \times d})\) is a solution to equation~\eqref{eq:conditional_MKVFBSDE_for_i} on the probability space \((\Omega^N, \mathcal{F}^N, \mathbb{P}^N)\).
\end{lemma}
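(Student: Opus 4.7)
The plan is to transfer the integral form of the FBSDE \eqref{eq:conditional_MKVFBSDE_for_i} (which $\Theta$ satisfies on $(\Omega, \mathcal{F}, \mathbb{P})$ for $i=1$) over to $(\Omega^N, \mathcal{F}^N, \mathbb{P}^N)$ via the substitution $(W^0, X_0, W) \leadsto (W^0, X_0^i, W^i)$. The Lebesgue integrals transfer pathwise by the construction $\Theta^i_s(\omega^0, \mathbf{x}, \boldsymbol{\omega}) = \Theta_s(\omega^0, x^i, \omega^i)$ together with the Borel measurability of $(t,\omega^0,x,\omega)\mapsto \Theta_t(\omega^0,x,\omega)$, so the entire content is the transfer of the four stochastic integrals.

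First I would write out the integral form of the FBSDE obeyed by $\Theta$ and identify the four integrands $\Sigma_\cdot(\Theta_\cdot,\mathcal{L}^1(\Theta_\cdot))$, $\Sigma^0_\cdot(\Theta_\cdot,\mathcal{L}^1(\Theta_\cdot))$, $Z_\cdot$, $Z^{0}_\cdot$, each lying in $\mathbb{H}^2(\mathbb{F};\R^{n\times d})$ (Assumption~\ref{assump:standard_assump} guarantees the coefficients are well defined and of at most linear growth, ensuring this). Applying Lemma~\ref{lemma:rep_of_stoch_int} --- and Remark~\ref{rem:rep_also_holds_for_W0} for the $W^0$-integrals --- yields Borel measurable maps $\Phi^{\Sigma},\Phi^{\Sigma^0},\Phi^{Z},\Phi^{Z^0}:\mathcal{C}\times\R^n\times\mathcal{C}\to C([0,T];\R^n)$ such that, $\mathbb{P}$-a.s.,
\[
\int_0^\cdot \Sigma_s(\Theta_s,\mathcal{L}^1(\Theta_s))\,dW_s \,=\, \Phi^{\Sigma}(W^0,X_0,W),
\]
with analogous formulas for the other three integrals. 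The ``moreover'' clause of Lemma~\ref{lemma:rep_of_stoch_int} then gives the corresponding identities on $(\Omega^N,\mathcal{F}^N,\mathbb{P}^N)$ with $(W^0,X_0,W)$ replaced by $(W^0,X_0^i,W^i)$, e.g.\
\[
\int_0^\cdot \Sigma_s(\Theta_s,\mathcal{L}^1(\Theta_s))(W^0,X_0^i,W^i)\,dW^i_s \,=\, \Phi^{\Sigma}(W^0,X_0^i,W^i), \quad \mathbb{P}^N\text{-a.s.}
\]

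Next I would identify the integrand on the left with $\Sigma_s(\Theta^i_s,\mathcal{L}^1(\Theta^i_s))$ by establishing the two identities $\Theta_s(W^0,X_0^i,W^i)=\Theta^i_s$ and $\mathcal{L}^1(\Theta^i_s)=\mathcal{L}^1(\Theta_s)$ $\mathbb{P}^0$-a.s. The first is the definition of $\Theta^i$. For the second, fix $\omega^0\in\Omega^0$: under $\mathbb{P}^{1,N}=\mu_0^{\otimes N}\otimes \mu_W^{\otimes N}$ the pair $(x^i,\omega^i)$ has law $\mu_0\otimes\mu_W=\mathbb{P}^1$, so the random variable $(\mathbf{x},\boldsymbol{\omega})\mapsto \Theta_s(\omega^0,x^i,\omega^i)$ on $(\Omega^{1,N},\mathcal{F}^{1,N},\mathbb{P}^{1,N})$ has the same law as $(x,\omega)\mapsto \Theta_s(\omega^0,x,\omega)$ on $(\Omega^1,\mathcal{F}^1,\mathbb{P}^1)$; passing to Lemma~\ref{lemma:process_conditional_dist} gives the claimed equality of conditional laws. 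Substituting $(W^0,X_0^i,W^i)$ into the pointwise identity satisfied $\mathbb{P}$-a.s.\ by $\Theta$ --- which is legitimate because $(W^0,X_0^i,W^i)$ has distribution $\mathbb{P}$ under $\mathbb{P}^N$ --- and combining with the stochastic integral representations and the identifications above gives precisely \eqref{eq:conditional_MKVFBSDE_for_i} for $\Theta^i$.

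The main obstacle, and the only non-routine step, is the passage from the $\mathbb{P}$-a.s.\ identity between stochastic integrals on $(\Omega,\mathcal{F},\mathbb{P})$ to the $\mathbb{P}^N$-a.s.\ identity after substituting $(W^0,X_0^i,W^i)$; this is exactly handled by the pathwise representation of Lemma~\ref{lemma:rep_of_stoch_int}, whose simple-predictable approximation argument was designed with this coupling in mind. Everything else is bookkeeping using the product structure of $\mathbb{P}^N$, the Borel measurability of $\Theta$ (available after choosing the $(\sigma(W^0_s,X_0,W_s))_{s\in[0,T]}$-progressively measurable version), and the inheritance $\Theta^i\in \mathbb{H}^2(\mathbb{F}^N;\cdot)$ from $\Theta\in \mathbb{H}^2(\mathbb{F};\cdot)$ via Fubini's theorem.
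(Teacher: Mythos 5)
Your proposal is correct and follows essentially the same route as the paper's own proof: both reduce the problem to representing the Lebesgue and stochastic integrals as Borel measurable functionals of $(W^0, X_0, W)$ via Lemma~\ref{lemma:rep_of_stoch_int} (and Remark~\ref{rem:rep_also_holds_for_W0}), transfer the resulting $\mathbb{P}$-a.s.\ identities to $(\Omega^N,\mathcal{F}^N,\mathbb{P}^N)$ using equality in law of $(W^0,X_0,W)$ and $(W^0,X_0^i,W^i)$, and then identify $\mathcal{L}^1(\Theta^i_t)$ with $\mathcal{L}^1(\Theta_t)$ together with the Lipschitz continuity of the coefficients to replace the frozen measure argument. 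The only cosmetic difference is that the paper spells out the last identification as an explicit $L^2$-norm-zero computation, whereas you assert the equality of conditional laws directly; both are valid.
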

\begin{proof}
Define
\[
\varphi^B: [0,T] \times \Omega \ni (t, \omega^0, x, \omega) \mapsto B_t\left(\Theta_t, \mathcal{L}^1(\Theta_t)\right)(\omega^0, x, \omega) \in \R^d .
\]
Then, by Lemma~\ref{lemma:process_conditional_dist}, the process \(\varphi^B\) is \(\mathbb{F}^N\)-progressively measurable.  
Hence, we can select a \(\big(\sigma(W^0_s, X_0, W_s \, ; \, s \leq t)\big)_{t \in [0,T]}\)-progressively measurable version of \(\varphi^B\).
Thus, if we define
\begin{align*}
\Phi^B_t(\omega^0, x, \omega) = \Phi^B_t (W^0, X_0, W)(\omega^0, x, \omega) &:=  \int^t_0 \varphi^B_s (\omega^0, x, \omega) \,ds\\
&= \int^t_0 \varphi^B_s (W^0, X_0, W)(\omega^0, x, \omega) \,ds,
\end{align*}
then, by Fubini's theorem, the map \((\omega^0, x, \omega) \mapsto \Phi^B_t(\omega^0, x, \omega)\) is Borel measurable on \(\mathcal{C} \times \R^n \times \mathcal{C}\).
Moreover, from previous Lemma \ref{lemma:rep_of_stoch_int} and the subsequent Remark \ref{rem:rep_also_holds_for_W0}, there exist Borel measurable functions $\Phi^{\Sigma}, \Phi^{\Sigma^0}, \Phi^{Z},$ and $\Phi^{Z^0}$ from $\mathcal{C} \times \R^n \times  \mathcal{C}$ to  $C([0,T];\R^{n\times d})$, such that the following equalities hold \(\mathbb{P}\)-a.s.:
\begin{align*}
\Phi_{\cdot}^{\Sigma}(W^0, X_0, W) &=  \int^\cdot_0 \Sigma(t, \Theta_t, \mathcal{L}^1(\Theta_t))\, dW_t ,\\
\Phi_{\cdot}^{\Sigma^0}(W^0, X_0, W)  &=  \int^\cdot_0 \Sigma^0(t, \Theta_t, \mathcal{L}^1(\Theta_t))\, dW^0_t ,\\
\Phi_{\cdot}^{Z}(W^0, X_0, W) &= \int^\cdot_0 Z_t\, dW_t ,\\
\Phi_{\cdot}^{Z^0}(W^0, X_0, W) &= \int^\cdot_0 Z^0_t \, dW^0_t.
\end{align*}
In addition, we can find a Borel measurable map $\Phi^G$ such that 
$\Phi^{G}(W^0, X_0, W) = G(X_T, \mathcal{L}(X_T))$, $\mathbb{P}$-a.s.

Next, define
\begin{align*}
&\Phi_t^X(\omega^0, x, \omega) := \textup{id}_x \circ \Theta_t(\omega^0, x, \omega), \quad 
\Phi_t^Y(\omega^0, x, \omega) := \textup{id}_y \circ \Theta_t(\omega^0, x, \omega), \\
&\Phi_t^Z(\omega^0, x, \omega) := \textup{id}_z \circ \Theta_t(\omega^0, x, \omega), \quad
\Phi_t^{Z^0}(\omega^0, x, \omega) := \textup{id}_{z^0} \circ \Theta_t(\omega^0, x, \omega), 
\end{align*}
where for any $(x,y,z,z^0)\in \R^n \times \R^n \times \R^{n\times d} \times \R^{n\times d}$,
\[
\textup{id}_x(x,y,z,z^0)   := x, \quad 
\textup{id}_y(x,y,z,z^0)   := y, \quad
\textup{id}_z(x,y,z,z^0)     := z, \,\, \text{and} \,\,\,\,
\textup{id}_{z^0}(x,y,z,z^0) := z^0,
\]
are projections. Note also that for each $t \in [0,T]$, \(\Phi_t^X\), \(\Phi_t^Y\), \(\Phi_t^Z\), and \(\Phi_t^{Z^0}\) are Borel measurable maps from \(\mathcal{C} \times \R^n \times \mathcal{C}\) into the corresponding Euclidean spaces, each equipped with its Borel \(\sigma\)-algebra.

Therefore, from equation~\eqref{eq:conditional_MKVFBSDE_for_i} with \(i = 1\), it follows that for any \(t \in [0,T]\), the following equalities hold \(\mathbb{P}\)-a.s.:
\begin{align*}
\left\{
\begin{aligned}
\Phi^X_t(W^0, X_0, W) &= X_0 + \Phi^B_t(W^0, X_0, W) + \Phi^{\Sigma}_t(W^0, X_0, W) + \Phi^{\Sigma^0}_t(W^0, X_0, W), \\
\Phi^Y_t(W^0, X_0, W) &= \Phi^G(W^0, X_0, W) - \Phi^Z_t(W^0, X_0, W) - \Phi^{Z^0}_t(W^0, X_0, W).
\end{aligned}
\right.
\end{align*}
Since the law of \((W^0, X_0, W)\) and \((W^0, X_0^i, W^i)\) is the same, it follows that for any \(t \in [0, T]\), the following equalities hold \(\mathbb{P}^N\)-a.s.:
\begin{align*}
\left\{
\begin{aligned}
\Phi^X_t(W^0, X_0^i, W^i) &= X_0^i + \Phi^B_t(W^0, X_0^i, W^i) + \Phi^{\Sigma}_t(W^0, X_0^i, W^i) + \Phi^{\Sigma^0}_t(W^0, X_0^i, W^i), \\
\Phi^Y_t(W^0, X_0^i, W^i) &= \Phi^G_t(W^0, X_0^i, W^i) - \Phi^Z_t(W^0, X_0^i, W^i) - \Phi^{Z^0}_t(W^0, X_0^i, W^i).
\end{aligned}
\right.
\end{align*}
Moreover, using the estimate~\eqref{eq:wasserstein_estimate_for_wellposeness}, the Lipschitz continuity of \(b\) and \(\Lambda\), and the fact that for any \(t \in [0, T]\), \(\mathcal{L}^1(\Theta_t) = \mathcal{L}^1(\Theta^i_t)\), \(\mathbb{P}^0\)-a.s. for all $t\in[0,T]$, we obtain
\begin{align*}
&\mathbb{E} \left[
\int_0^T \left| B_t\left( \Theta_t^i, \mathcal{L}^1(\Theta_t) \right) - B_t\left( \Theta_t^i, \mathcal{L}^1(\Theta^i_t) \right) \right|^2 dt 
\right] \\
&\quad \leq C \int_0^T \mathbb{E}^0 \left[ 
\mathcal{W}_2^2\left( \mathcal{L}^1(X_t), \mathcal{L}^1(X_t^i) \right) 
+ \mathcal{W}_2^2\left( \mathcal{L}^1(\Theta_t), \mathcal{L}^1(\Theta^i_t) \right)
\right] dt \\
&\quad = 0.
\end{align*}

A similar \(dt \otimes \mathbb{P}^N\)-equivalence also holds for \(\Sigma\), \(\Sigma^0\), \(F\), and \(G\), using the Lipschitz continuity of \(\sigma\), \(\sigma^0\), \(\partial_x H\), \(\partial_x g\), and \(\Lambda\).  
Therefore, we conclude that \(\Theta^i\) satisfies equation~\eqref{eq:conditional_MKVFBSDE_for_i} for all \(i = 1, \ldots, N\).
\end{proof}

\begin{remark}
\label{rem:non_canonical}
It might be possible to work within a general (non-canonical) probability setup by employing a type of Yamada–Watanabe theorem for FBSDEs (see \cite[Theorem 1.33]{carmona2018probabilistic_II}). Simply put, if the strong uniqueness holds for \eqref{eq:conditional_MKVFBSDE_for_i}, with $i=1$, then $(X,Y,\int (Z_s, Z_s^0)\,ds)$ can be represented via a function $\Phi$ depending on the $X, W$, and $W^0$. Accordingly, by defining $(X^i,Y^i,\int (Z^i_s, Z_s^{0,i})\,ds):= \Phi(W^0, X_0^i, W^i)$,  $(X^i,Y^i,\int (Z^i_s, Z_s^{0,i})\,ds)_{i\in \{1,\ldots,N\}}$ is conditionally i.i.d.\ and solves \eqref{eq:conditional_MKVFBSDE_for_i}. However, the result is stated only for $\int^\cdot_0 (Z_s, Z_s^0) ds$, not for $Z$ and $Z^0$ themselves. In the proof of Theorem \ref{thm:main}, we specifically need $(Z^i, Z^{0,i})_{i\in\{1,\ldots,N\}}$ to be conditionally i.i.d.\ and since the space accommodating these processes are less regular than that of  $\int^\cdot_0 (Z_s, Z_s^0) ds$, the argument becomes more delicate. This is why we have chosen to work in the canonical setup, where establishing conditional i.i.d.-ness is more straightforward.
\end{remark}

\end{appendix}

\bibliographystyle{plain}
\bibliography{mybibfile}

\end{document}